\documentclass[reqno]{amsart}
\usepackage[foot]{amsaddr}

\usepackage{graphicx} %
\usepackage{amsmath,amssymb,amsthm,amscd,mathtools}

\usepackage{setspace,float}
\usepackage{multicol}
\usepackage{fullpage}
\usepackage{tcolorbox,xcolor}
\usepackage{algorithm,algpseudocode}

\algnewcommand\algorithmicswitch{\textbf{switch}}
\algnewcommand\algorithmiccase{\textbf{case}}
\algnewcommand\algorithmicassert{\texttt{assert}}
\algnewcommand\Assert[1]{\State \algorithmicassert(#1)}%
\algdef{SE}[SWITCH]{Switch}{EndSwitch}[1]{\algorithmicswitch\ #1\ \algorithmicdo}{\algorithmicend\ \algorithmicswitch}%
\algdef{SE}[CASE]{Case}{EndCase}[1]{\algorithmiccase\ #1}{\algorithmicend\ \algorithmiccase}%
\algtext*{EndSwitch}%
\algtext*{EndCase}%

\usepackage{bm,blkarray}

\usepackage{mleftright,enumitem}
\usepackage{hhline}
\usepackage{caption,subcaption}
\usepackage{tikz,tikz-cd}

\usepackage[hidelinks]{hyperref}
\usepackage[nameinlink]{cleveref}

\usepackage{refcount}

\newcommand{\R}{{\mathbb{R}}}
\newcommand{\Z}{{\mathbb{Z}}}
\newcommand{\N}{{\mathbb{N}}}

\newcommand{\sph}{{\mathcal{S}}}

\newcommand{\eps}{\varepsilon}

\newcommand{\wt}{\widetilde}
\newcommand{\wh}{\widehat}

\newcommand{\abs}[1]{\mleft|#1\mright|}
\newcommand{\sabs}[1]{|#1|}
\newcommand{\magn}[1]{\left\|#1\right\|}
\newcommand{\smagn}[1]{\|#1\|}
\newcommand{\mmagn}[1]{\big\|#1\big\|}
\newcommand{\pare}[1]{\mleft(#1\mright)}

\newcommand{\set}[1]{{\left\{{#1}\right\}}}

\newcommand{\bmat}[1]{\begin{bmatrix}#1\end{bmatrix}}
\newcommand{\pmat}[1]{\begin{pmatrix}#1\end{pmatrix}}

\newcommand{\ceil}[1]{\mleft\lceil#1\mright\rceil}

\newcommand{\floor}[1]{\mleft\lfloor#1\mright\rfloor}

\newcommand{\spliteq}[2]{\begin{equation}#1\begin{split}#2\end{split}\end{equation}}
\newcommand{\eq}[1]{\begin{equation}{#1}\end{equation}}

\DeclareMathOperator*{\E}{\mathbb{E}}

\DeclareMathOperator{\col}{Col}

\DeclareMathOperator{\spn}{span}

\DeclareMathOperator{\tr}{tr}

\DeclareMathOperator{\diag}{diag}

\DeclareMathOperator{\Haar}{Haar}

\newcommand{\at}[1]{^{(#1)}}

\newcommand{\toth}{^{\textnormal{th}}}

\newtheorem{theorem}{Theorem}[section]
\newtheorem{lemma}[theorem]{Lemma}
\newtheorem{proposition}[theorem]{Proposition}

\AddToHook{env/lemma/begin}{\crefalias{theorem}{lemma}}
\AddToHook{env/corollary/begin}{\crefalias{theorem}{corollary}}
\AddToHook{env/proposition/begin}{\crefalias{theorem}{proposition}}
\AddToHook{env/definition/begin}{\crefalias{theorem}{definition}}
\AddToHook{env/example/begin}{\crefalias{theorem}{example}}

\crefname{theorem}{Theorem}{Theorems}
\crefname{corollary}{Corollary}{Corollaries}
\crefname{lemma}{Lemma}{Lemmas}
\crefname{equation}{}{}
\crefname{section}{Section}{Sections}
\crefname{subsection}{Subsection}{Subsections}
\crefname{figure}{Figure}{Figures}
\crefname{proposition}{Proposition}{Propositions}

\makeatletter
\newtheorem*{rep@theorem}{\rep@title}
\newcommand{\newreptheorem}[2]{%
\newenvironment{rep#1}[1]{%
 \def\rep@title{#2 \ref{##1}}%
 \begin{rep@theorem}}%
 {\end{rep@theorem}}}
\makeatother

\newreptheorem{theorem}{Theorem}
\newreptheorem{coro}{Corollary}

\theoremstyle{definition}
\newtheorem{definition}[theorem]{Definition}

\usepackage{amssymb}
\usepackage{amscd}
\usepackage{amsthm}
\usepackage{setspace}
\usepackage{graphicx}
\usepackage{tcolorbox} 
\usepackage{subcaption} 

\usepackage{tikz-cd}
\usepackage{bm,blkarray}
\numberwithin{equation}{section}
\usepackage{algorithm}
\usepackage{algpseudocode}

\usepackage{tabularray}
\UseTblrLibrary{amsmath} 

\usepackage[nameinlink]{cleveref}

\usepackage{mathtools}
\usepackage[tableposition=top]{caption}
\usepackage{booktabs,dcolumn}

\usepackage{refcount}

\newcommand\Y{\mathbf{Y}}
\newcommand\W{\mathbf{W}}

\newcommand\Q{\mathbb{Q}}

\newcommand\Col{{\operatorname{Col}}}

\renewcommand\epsilon\varepsilon

\newcommand{\norm}[1]{{\left\lVert #1 \right\rVert}}

\newcommand{\GL}{\mathop{\rm GL}}

\newcommand{\up}[1]{^{\smash{#1}}}
\newcommand{\RandPP}{\textsc{RandPP}}
\newcommand{\VolPP}{\textsc{Vol}}

\usepackage{xcolor}      %
\usepackage{listings}    %

\definecolor{codegreen}{rgb}{0,0.6,0}
\definecolor{codegray}{rgb}{0.5,0.5,0.5}
\definecolor{codepurple}{rgb}{0.58,0,0.82}
\definecolor{white}{rgb}{1,1,1}

\lstdefinestyle{mystyle}{
    backgroundcolor=\color{white}, 
    commentstyle=\color{codegreen},
    keywordstyle=\color{magenta}, numberstyle=\tiny\color{codegray}, stringstyle=\color{codepurple}, basicstyle=\ttfamily\footnotesize, breakatwhitespace=false,  breaklines=true,                 
    captionpos=b,                    
    keepspaces=true,                 
    numbers=left,                    
    numbersep=5pt,                  
    showspaces=false,                
    showstringspaces=false,
    showtabs=false,                  
    tabsize=2,
    frame=single,
    rulecolor=\color{black},
    title=\lstname
}

\title[Entry growth in Gaussian elimination]{Entry growth in Gaussian elimination}

\author{Rikhav Shah}
\address{Department of Mathematics, Massachusetts Institute of Technology, Cambridge, MA 02139 USA.}
\email{rdshah@mit.edu}
\author{John Urschel}
\email{urschel@mit.edu}
\subjclass[2020]{65F05, 15A23}
\keywords{Gaussian elimination, growth factor}

\newtheorem{example}[theorem]{Example}

\newtheorem{remark}[theorem]{Remark}

\newcommand{\maxnorm}[1]{\magn{#1}_{\max}}

\newcommand{\mmaxnorm}[1]{\mmagn{#1}_{\max}}
\DeclareMathOperator{\Mat}{Mat}
\DeclareMathOperator{\orthog}{O}

\DeclareMathOperator{\growth}{growth}
\newcommand{\one}{\mathbf1}
\newcommand{\zero}{\mathbf0}
\DeclareMathOperator{\PP}{PP}
\DeclareMathOperator{\CP}{CP}
\DeclareMathOperator{\RP}{RP}
\DeclareMathOperator{\nnz}{nnz}

\DeclareMathOperator{\vvec}{vec}

\begin{document}

\begin{abstract}
Gaussian elimination is one of the oldest algorithms in mathematics, and the most popular method for solving an unstructured linear system. Its stability in finite precision is controlled by its growth factor, which measures how large the entries produced during elimination can become. Understanding the worst-case behavior of this quantity has been a central problem in numerical analysis since the 1940s.

Here we make a significant leap in that understanding, settling several open problems. In particular, we determine the asymptotic behavior of the maximum growth factor under complete and rook pivoting, proving that both are quasi-polynomial in dimension. We also show that the exponential growth under partial pivoting persists for sparse matrices and that randomized partial pivoting suffers the same instability. In contrast, we show that every non-singular matrix has a row permutation with polynomial growth, though finding the optimal row permutation is NP-hard.
\end{abstract}

\maketitle

\section{Introduction}

Solving a linear system $A\bm{x}=\bm{b}$ is one of the oldest problems in mathematics, and the technique now known as Gaussian elimination is one of the oldest and most popular approaches. In Gaussian elimination, an $n \times n$ matrix $A$ with non-zero leading principal minors is factored into the product of a lower triangular matrix $L$ and an upper triangular matrix $U$ by a sequence of rank-one transformations, resulting in the sequence of matrices $A\at0 = A$ and $A\at k = \big(a_{ij}\at k\big)_{i,j=1}^{n-k}$ for $k$ equals $1$ to $n-1$, satisfying
\begin{equation}\label{a0}
A\at k = Z_k - Y_k W_k^{-1} X_k,  \qquad \text{where } A = \,\begin{blockarray}{ccc}
         {\scriptstyle k} & {\scriptstyle n-k} &  \\
       \begin{block}{[cc]c}
         W_k & X_k &{ \scriptstyle k} \\ 
          Y_k & Z_k & {\scriptstyle n-k} \\
       \end{block}
     \end{blockarray}. 
     \end{equation}
The sequence $A\at0,\ldots,A\at n$ is shift-invariant, i.e., $A^{(k+\ell)}$ is the $\ell\toth$ step of Gaussian elimination applied to $A\at k$, and defining $A^{(k+1)}$ as a function of $A\at k$ recovers the Gaussian elimination algorithm itself.
The resulting LU factorization of $A$ is encoded by the first row and column of each of the iterates. In particular, 
\[L_{ij}  = \frac{a\at{j-1}_{i-j+1,1}}{a\at{j-1}_{11}} \quad \text{for} \quad i \ge j \qquad \text{and} \qquad U_{ij} = a\at{i-1}_{1,j-i+1} \quad \text{for} \quad i \le j.\]
The LU factorization of a matrix, under the normalization that $L$ is unit lower triangular, is unique if it exists. Unfortunately, a matrix does not have an LU factorization if one of its leading submatrices $W_k$ is singular, in which case a permutation of the rows (and/or columns) may be necessary. What's more, performing these computations in finite precision can produce cascading round-off errors. This error is controlled by a quantity known as the growth factor
\[\growth(A)=\max\set{\maxnorm L, \frac{\maxnorm U}{\maxnorm A}},\]
where $\maxnorm\cdot$ is the entrywise maximum norm (if no LU factorization exists, set $\growth(A)=+\infty$). 
In order for the Gaussian elimination procedure to compute the LU factorization of a matrix in a numerically stable way, the growth factor of that matrix must not be too large. Roughly speaking, the number of bits of precision required for stable computation scales as $\log(\growth(A))$, see \cite[Theorem 3.3.1]{b0} and \cite[Theorem 9.3 \& Lemma 9.6]{b1} for details. 

For this reason, in practice the matrix $A$ is replaced by the matrix $P A Q$, where $P$ and $Q$ are permutation matrices that can be chosen quickly, with the idea that $\growth(P A Q)$ may be significantly smaller than $\growth(A)$. The application of $P$ and $Q$ can then be undone by downstream tasks after Gaussian elimination is completed. The rule governing which permutation matrices $P$ and $Q$ are chosen for a given matrix $A$ is referred to as a pivoting strategy, and the top-left entry of each step of Gaussian elimination is referred to as the pivot. In this work, we prove several results related to the growth factor of Gaussian elimination under various pivoting strategies.

\subsection{Pivoting Strategies} Before presenting our contributions, we formally define the pivoting strategies under consideration. Complete pivoting, which takes the largest magnitude entry of the matrix as the pivot, is the most traditional pivoting strategy. Its analysis dates back to the seminal 1947 {\it Numerical Inverting of Matrices of High Order} by Goldstine and von Neumann \cite{b2}, often referred to as the ``first modern paper on numerical analysis" \cite{b3}. The stability of Gaussian elimination under complete pivoting has been a major open problem ever since.
In 1961, Wilkinson produced a quasi-polynomial upper bound for the growth factor under complete pivoting of $2 \sqrt{n} n^{\tfrac{1}{4}\log(n)}$ and remarked that ``no matrix has been encountered for which [the growth factor for complete pivoting] was as large as 8'' \cite{b4}.
Bisain, Edelman, and Urschel recently improved this upper bound to approximately $n^{0.207 \log n}$ \cite{b5}. In the same decade that Wilkinson commented on the small growth factors encountered in practice,  a folklore conjecture that the growth factor under complete pivoting was at most $n$ arose, which ``became one of the most famous open problems in numerical analysis" \cite[pg. 181]{b1}.
This conjecture was later shown to be false in dimension $13$ in floating point arithmetic by Gould \cite{b6} (extended to exact arithmetic by Edelman \cite{b7}), and Edelman and Urschel recently showed it to be false for all $n \ge 11$ by a multiplicative constant \cite{b8}. Afterward, Fedorovskii produced examples of matrices with growth on the order of $n^{\log_{7/2}(4)}$ under complete pivoting \cite{b9}. However, the gap between lower and upper bounds for the growth factor under complete pivoting in the literature remained large.
We refer the reader to \cite{b1,b8,b10} for more details.

In contrast, partial pivoting, which takes the largest magnitude entry in the first column as the pivot, is completely understood in the worst case. In his 1965 text {\it The Algebraic Eigenvalue Problem} \cite{b11}, Wilkinson proved the upper bound of $2^{n-1}$ and produced an example matrix, commonly referred to as the Wilkinson matrix, that achieves it. Later, Higham and Higham characterized the full set of matrices with growth $2^{n-1}$ \cite{b12}. An exponentially large growth factor can be a prohibitive barrier to accurate computation in floating point arithmetic. Despite this, due to the low cost and memory access requirements of partial pivoting, Gaussian elimination with partial pivoting is the premier computational technique for solving a linear system and is the algorithm utilized by the built-in linear system solver in nearly every programming language. Much of the analysis and debate concerning partial pivoting regards whether large growth can occur in practice \cite{b13,b14,b15,b16}, and to what extent large growth is unstable (i.e., its possible resolution by the addition of a small random perturbation) \cite{b17}. We refer the reader to \cite{b18,b10} for more details.

Due to the exponential worst-case behavior of partial pivoting, a number of alternate pivoting strategies have been suggested. One of the most notable is rook pivoting, which takes the largest magnitude entry in its row and column as the pivot. Rook pivoting serves as a compromise between partial and complete pivoting: the strategy appears to be relatively fast in practice \cite{b19,b20}, like partial pivoting, and as Foster proved in \cite{b19}, it is known to have at most a quasi-polynomial growth factor $\frac{3}{2} \, n^{3\log(n)/4}$, like complete pivoting.
Also like complete pivoting, there is a substantial gap between the upper and lower bounds, with the largest known growth factor for rook pivoting being only of order $n^{1.669}$, proven by Edelman and Urschel \cite{b8}. Another alternative which has received recent attention is the use of a randomized version of partial pivoting, which chooses the pivot from the first column probabilistically, with weights depending on the relative magnitudes of the entries.

For the purposes of worst-case analysis, when we consider pivoting strategies such as partial, rook, or complete pivoting, we need not concern ourselves with the choice of permutation matrices during the algorithm, only with the final row- and column-permuted matrix. For this reason, it suffices to restrict ourselves to the set of matrices that require no pivoting under a given pivoting strategy, and analyze the growth factor without pivoting over those sets. To that end, we define the following three classes:
\begin{enumerate}
    \item $\CP_n(\R)$: $A\in\GL_n(\R)$ is completely pivoted if
\[\abs{a\at k_{11}}\ge\max_{i,j\in[n-k]} \abs{a\at k_{ij}} \qquad \text{for} \quad k = 0,\ldots,n-1,\]
    \item $\RP_n(\R)$: $A\in\GL_n(\R)$ is rook pivoted if
\[\abs{a\at k_{11}}\ge\max_{j\in[n-k]}\max\left\{\abs{a\at k_{1j}},\abs{a\at k_{j1}}\right\}  \qquad \text{for} \quad k = 0,\ldots,n-1,\]
    \item $\PP_n(\R)$: $A\in\GL_n(\R)$ is partially pivoted if
\[\abs{a\at k_{11}}\ge\max_{j\in[n-k]}\abs{a\at k_{j1}}  \qquad \text{for} \quad k = 0,\ldots,n-1,\]
\end{enumerate}
where $[n]:=\{1,\ldots,n\}$.
In addition, in this work we analyze two randomized pivoting strategies, which correspond to distributions on permutation matrices. Each permutation matrix $P$ can be identified with the permutation sequence $\pare{\pi_1,\ldots,\pi_n}$, where $\bm e_j^\top P \bm e_{\pi_j}=1$. Let $A[S,T]$ denote the submatrix of a matrix $A$ indexed by row indices $S$ and column indices $T$. The two randomized pivoting strategies we consider are volume pivoting, characterized by certain marginals
\eq{\label{a1}
P\sim\VolPP(A):\qquad \Pr(\set{\pi_1,\ldots,\pi_k}=S)\propto(\det A[S,[k]])^2,}
and randomized partial pivoting, for $p>0$, which is fully defined
\eq{\label{a2}P\sim\RandPP_p(A): \qquad \Pr(\pi_{k+1}=i\,|\,\set{\pi_1,\ldots,\pi_{k}}=S)\propto\abs{\frac{\det A[S+\set i,[k+1]]}{\det A[S,[k]]}}^p.}
It is not obvious from the definition of $\VolPP$ that a distribution with those prescribed marginals exists; indeed part of our contribution is showing that this is the case (\Cref{a3}).

\subsection{Our Contributions} Here we make major progress on a number of questions regarding the growth factor under various deterministic and randomized pivoting strategies.

First, we prove that the growth factor under partial pivoting can remain exponentially large even for sparse matrices. The worst-case bound of $2^{n-1}$ is achievable by matrices with as few as $4n-4$ non-zero entries, which is tight. Matrices that are locally sparse, i.e., with at most $k$ non-zero entries per row and column, can also have exponentially large growth under partial pivoting. For example, there exist matrices with at most three non-zero entries per row and column with growth at least $\varphi^{n-1}$, where $\varphi$ is the golden ratio. In contrast, any partially pivoted matrix with at most two non-zero entries per row and column has growth factor at most two.

\begin{theorem}[\Cref{a4} \& \Cref{a5} / Sparse exponential growth of partial pivoting]
For every $n\ge 2$, there is a matrix $A \in \PP_n(\R)$ with $\nnz(A) = 4n - 4$ and $\growth(A) = 2^{n-1}$, and this is the minimum possible value of $\nnz(A)$ for any matrix in $\PP_n(\R)$ with growth $2^{n-1}$. In addition, for all integers $2 \le k < n$, there is a matrix $A \in \PP_n(\R)$ with at most $k+1$ non-zero entries per row and column and $\growth(A) = r_k^{n-1} \ge (2-2^{1-k})^{n-1}$, where $r_k$ is the unique solution to $x^{k} = x^{k-1} + \ldots + x + 1$ in $(2-2^{1-k},2)$. Any matrix $A\in \PP_n(\R)$ with at most two non-zero entries per row and column has $\growth(A) \le 2$.
\end{theorem}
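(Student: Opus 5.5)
We treat the four claims separately: two constructions and two lower/upper bound arguments.

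For the construction with $\nnz(A)=4n-4$, we start from the Wilkinson matrix (ones on the diagonal, $-1$ below it, ones in the last column). It has growth $2^{n-1}$, but $\Theta(n^2)$ nonzeros coming from the strictly lower triangle. We will replace it with a sparse lower bidiagonal-type pattern: diagonal entries $1$, subdiagonal entries $-1$, and last column all ones. We then choose signs and magnitudes so that each elimination step doubles the last-column entry. The point is that pivot $k$ only needs to eliminate one entry below it, namely the subdiagonal one, and the fill-in that entry creates must feed the doubling. Concretely, we will track the iterates $A\at k$ and verify by induction that:
\begin{itemize}
\item the first column of $A\at k$ has the pivot $1$ and entries of modulus at most $1$, so $A\in\PP_n(\R)$;
\item the last-column entry becomes $2^k$.
\end{itemize}
The pattern then has $n$ diagonal, $n-1$ subdiagonal and $n-1$ last-column nonzeros, plus about $n-2$ further entries needed to make the partial-pivoting ties work; we aim for exactly $4n-4$ in total. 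If the bidiagonal pattern does not give exact doubling, the fallback is to place an extra nonzero in row $k+1$, column $n$ or in the first row, and adjust.

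For minimality we use the characterization of matrices with growth exactly $2^{n-1}$ due to Higham and Higham \cite{b12}. In any such matrix every multiplier has modulus $1$, and every step must exactly double the maximal entry, which propagates to the last column. From this we argue column by column:
\begin{itemize}
\item each of the first $n-1$ columns of $A\at{k}$ needs a nonzero pivot and at least one nonzero multiplier;
\item each row needs a nonzero in the column that carries the growth.
\end{itemize}
Counting these forced nonzeros in the original $A$ gives $4n-4$. The delicate part is that zeros of $A\at k$ do not correspond directly to zeros of $A$, so we will argue through minors, using $a\at k_{ij}=\det A[[k]\cup\{k+i\},[k]\cup\{k+j\}]/\det W_k$.

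For the locally sparse family we use a banded analogue of the Wilkinson matrix. We set $-1$ on the first $k-1$ subdiagonals and $1$ on the diagonal, and the growth column is placed so that each row still has at most $k+1$ nonzeros. We expect a last-column recursion $g_m=g_{m-1}+\dots+g_{m-k}$ of $k$-bonacci type, which gives growth $r_k^{n-1}$ (suitably normalized). To keep the last column sparse as well, we will instead pass the growth along a moving band, so that the entry carrying growth shifts diagonally. Verifying the partial pivoting condition then reduces to showing that the subdiagonal entries of $A\at k$ stay bounded by the pivot in modulus. The bound $r_k\ge 2-2^{1-k}$ is elementary: evaluate $x^k-(x^{k-1}+\dots+1)=(x^{k+1}-2x^k+1)/(x-1)$ at $2-2^{1-k}$ and show it is negative.

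For the bound with at most two nonzeros per row and column, we view the sparsity graph, which has maximum degree $2$ and is therefore a disjoint union of paths and cycles. Eliminating a pivot whose row and column each contain at most one other nonzero creates at most one fill-in, and one checks that this preserves the degree-$\le 2$ structure. Partial pivoting gives multipliers of modulus at most $1$. An entry can be updated only once per connected component before the component closes, which is the key point, so each entry of $U$ is at most $|a|+|\ell||a'|\le 2\maxnorm{A}$.

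The main obstacle is the minimality argument for $4n-4$, which requires carefully translating the Higham–Higham equality structure into forced nonzeros of the original matrix.
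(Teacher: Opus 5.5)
Your proposal has genuine gaps: neither construction is actually produced (the sketched $4n-4$ pattern provably fails), and the minimality count does not reach $4n-4$. For growth $2^{n-1}$, \Cref{a14} gives $L=DMD$, so every multiplier is $\pm1$. You note this yourself, and it is the opposite of your rationale that each pivot ``only needs to eliminate one entry below it.'' In particular $A\bm e_1=T_{11}DM\bm e_1$ and $A\bm e_n=\theta DM\bm d=\theta D\one$ are entirely nonzero. With your values (diagonal $1$, subdiagonal $-1$, last column $\one$) one gets $U_{in}=i$, so the growth is only $n$.

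No choice of values rescues the pattern. Take a middle column supported on $\{j,j+1\}$ with $j\le n-2$ and put $\bm v=DA\bm e_j$. Upper triangularity of $\widetilde T=M^{-1}DA[:,[n-1]]$ forces $\bm e_{j+1}^\top M^{-1}\bm v=v_j+v_{j+1}=0$ and $\bm e_{j+2}^\top M^{-1}\bm v=2v_j+v_{j+1}=0$, so $\bm v=0$. Densifying the first column already consumes your $n-2$ spare nonzeros. Hence no matrix containing your pattern with $4n-4$ nonzeros has growth $2^{n-1}$ once $n\ge4$. The paper's example instead puts the two nonzeros of column $j$ in rows $j-1,j$ with ratio $\tfrac12:-1$, so that $M^{-1}DA\bm e_j$ vanishes below row $j$, and it has dense first and last columns. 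Its iterates reproduce themselves with last column $2^k\one$.

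These same structural facts are what your minimality count lacks. The nonzeros you list total only $n+2(n-1)=3n-2$, and you never show that a middle column of $A$ itself (as opposed to of $A\at k$) has two nonzeros. The paper gets $2n$ from the dense first and last columns. For $1<j<n$ it uses $\widetilde T_{j+1,j}=\bm e_{j+1}^\top M^{-1}DA\bm e_j=0$, with $\bm e_{j+1}^\top M^{-1}$ nonzero in its first $j+1$ entries. A lone nonzero in $A\bm e_j$ would then have to sit below row $j+1$. So the first $j$ entries of $A\bm e_j$ vanish, hence (as $M$ is unit lower triangular) so do those of $\widetilde T\bm e_j$, giving $T_{jj}=0$.

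For the $k$-family, $-1$ on $k-1$ subdiagonals yields, wherever the last column of $A$ vanishes, the $(k-1)$-term recurrence $U_{in}=U_{i-1,n}+\dots+U_{i-k+1,n}$. Its growth rate is not $r_k$. The ``moving band'' meant to keep the last column sparse is never specified. The missing idea is to use $k$ subdiagonals and choose the last column so that $U\bm e_n=(r_k^{i-1})_i$ exactly. Because this geometric sequence satisfies the $k$-term recurrence, $(LU)_{in}=0$ automatically for $i>k$. So $A\bm e_n$ is supported on rows $i\le k$, with entries $s_i=\sum_{\ell=1}^{k+1-i}r_k^{-\ell}$. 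Since $U$ is the identity in its first $n-1$ columns, both partial pivoting and $\growth(A)=r_k^{n-1}$ are immediate.

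The remaining pieces are fine. Your bound via $(x-1)(x^k-x^{k-1}-\dots-1)=x^{k+1}-2x^k+1$, together with Descartes' rule for uniqueness, can replace the paper's Rouch\'e argument. Your two-nonzeros argument is the paper's invariant: an entry updated on top of an existing nonzero becomes isolated in its row and column, so the pivot-row entry used in any update is at most $\maxnorm{A}$.
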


Despite this, we prove the existence of a randomized row pivoting strategy with polynomially bounded growth factor in expectation. In particular, we produce polynomial estimates for the Frobenius norms of $L$ and $U$, which translate immediately to polynomial estimates for growth. We compare these estimates to Frobenius norm lower bounds produced by Sylvester Hadamard matrices for any row permutation and lower bounds produced by arbitrary Hadamard matrices for any row and column permutation.

\begin{theorem}[\Cref{a6} / Existence of a low-growth row permutation]
For every $n \in \N$ and matrix $A\in\GL_n(\R)$, $\VolPP(A)$ is a well-defined probability distribution on permutations. If $P \sim\VolPP(A)$ then $PA$ has an LU factorization with probability one, $\E \|L\|^2_F \le (\frac{1}{6} + o(1))n^3$, $\E \|U\|^2_F \le (\frac{1}{6} + o(1))n^3 \|A\|_{1 \rightarrow 2}^2$, and
$\E\growth(P A)\le(\frac1{\sqrt6}+o(1))n^2$.
\end{theorem}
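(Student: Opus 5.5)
The plan has three parts: build $\VolPP(A)$ as a Markov chain on nested row sets, then control $L$ and $U$ one column (respectively one row) at a time with Cauchy--Binet.

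\textbf{Construction.} Write $\mu_k(S)=\det(A[S,[k]])^2/G_k$ for $|S|=k$, where $G_k=\det(A[:,[k]]^\top A[:,[k]])$. By Cauchy--Binet, $\mu_k$ is a probability measure on $k$-subsets of $[n]$. It is the projection determinantal point process whose kernel is the orthogonal projection $\Pi_k$ onto $\col(A[:,[k]])$. Since $\col(A[:,[k]])\subset\col(A[:,[k+1]])$, we have $\Pi_k\preceq\Pi_{k+1}$. I would then invoke, or reprove, the monotone coupling theorem for determinantal processes with ordered kernels (Lyons).

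Concretely, I want a transition kernel $K_k(S,S\cup\{i\})\ge 0$ with $\sum_{S}\mu_k(S)K_k(S,T)=\mu_{k+1}(T)$. I would obtain it from a flow/Hall-type argument on the bipartite inclusion graph between $k$-sets and $(k+1)$-sets. The needed Hall inequality, $\mu_{k+1}(\mathcal N(\mathcal F))\ge\mu_k(\mathcal F)$, is exactly stochastic domination of nested projection DPPs.

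Chaining the kernels gives the law of $(\pi_1,\dots,\pi_n)$. Its marginals are the $\mu_k$, so $\VolPP(A)$ is well defined. Since $\mu_k(S)>0$ forces $\det A[S,[k]]\neq0$, every leading minor of $PA$ is nonzero almost surely, and hence $PA$ has an LU factorization with probability one.

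\textbf{Bound on $L$.} With $S_j=\{\pi_1,\dots,\pi_j\}$, Cramer's rule gives, for $i>j$,
\[L_{ij}=\frac{\det A[S_j-\pi_j+\pi_i,[j]]}{\det A[S_j,[j]]}.\]
Hence
\[\sum_{i>j}L_{ij}^2\le\sum_{s\in S_j}\sum_{r\notin S_j}\frac{\det A[S_j-s+r,[j]]^2}{\det A[S_j,[j]]^2}.\]
Taking expectation against $\mu_j$ cancels the denominators and leaves $G_j^{-1}\sum_{S}\sum_{s,r}\det A[S-s+r,[j]]^2$. Each $j$-set $S'$ arises as $S-s+r$ from exactly $j(n-j)$ pairs $(S,s,r)$, so this sum is $j(n-j)$. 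Therefore
\[\E\|L\|_F^2\le\sum_j\bigl(1+j(n-j)\bigr)=(\tfrac16+o(1))n^3.\]

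\textbf{Bound on $U$ (main obstacle).} For $j\ge i$,
\[U_{ij}=\frac{\det A[S_i,[i-1]\cup\{j\}]}{\det A[S_{i-1},[i-1]]}.\]
This involves the joint law of $(S_{i-1},\pi_i)$, not just a single marginal, so the same trick does not apply directly. I would first try to bound the numerator via Laplace expansion of $\det A[S_i,[i-1]\cup\{j\}]$ along its last column, $A[S_i,j]$. This expresses $U_{ij}$ as a signed combination of the entries $A_{\pi_t,j}$, $t\le i$, with coefficients that are ratios of $(i-1)$-minors. Averaging those coefficient ratios under $\mu_{i-1}$ and $\mu_i$ by the same swap-counting as for $L$, then applying Cauchy--Schwarz, should give $\E U_{ij}^2\lesssim i\,\|A[:,j]\|_2^2$. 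Summing over $i\le j$ would yield $(\tfrac16+o(1))n^3\|A\|_{1\to2}^2$.

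If this fails because the coupling is not explicit, I would instead choose $K_k$ concretely. The goal is to dominate the conditional law of $\pi_{k+1}$ by a multiple of $\det A[S+i,[k+1]]^2$, so that the $\det A[S_{i-1},[i-1]]^2$ denominator cancels as before. Making this step rigorous is the part I expect to be hardest.

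\textbf{Growth.} Finally, use $\maxnorm{U}\le\|U\|_F$ and $\maxnorm{A}\ge\|A\|_{1\to2}/\sqrt n$. Together with Jensen's inequality ($\E X\le\sqrt{\E X^2}$), this gives
\[\E\growth(PA)\le\bigl(\tfrac1{\sqrt6}+o(1)\bigr)n^2.\]
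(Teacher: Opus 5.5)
Your construction of $\VolPP(A)$ (Lyons' domination for nested projection DPPs plus a Strassen/Hall coupling), the almost-sure existence of the LU factorization, and the $L$ bound by swap-counting all match the paper's argument. The gap is the bound on $U$. You leave it open, and your primary route cannot reach the stated order.

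Write $U_{ij}=\sum_{t\le i}\pm A_{\pi_t,j}c_t$ with $c_t=\det A[S_i\setminus\{\pi_t\},[i-1]]/\det A[S_{i-1},[i-1]]$, so that $c_i=1$. Cauchy--Schwarz then gives $U_{ij}^2\le\|Ae_j\|^2\sum_t c_t^2$. You do not know the law of $\pi_i$ given $S_{i-1}$, so the swap-counting has to overcount over all $r\notin S_{i-1}$. This yields only $\E\sum_t c_t^2\le 1+(i-1)(n-i+1)$, hence $\E U_{ij}^2\le(1+(i-1)(n-i+1))\|Ae_j\|^2$. Summing gives $\E\|U\|_F^2\le(\frac1{12}+o(1))n^4\|A\|_{1\to2}^2$, and therefore only $\E\growth(PA)=O(n^{5/2})$. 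The extra factor $n-i+1$ is lost when you separate the column $A[:,j]$ from the $(i-1)$-minors.

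The fix needs no explicit coupling. The ``joint law'' obstacle is not real: you already removed the dependence of $L_{ij}$ on $\pi_j$ by summing over all $s\in S_j$. Do the same with $\pi_i$, but keep the full $i\times i$ minor intact. With $C=[i-1]$, $S=S_{i-1}$ and $B=A^\top A$,
\[U_{ij}^2\le\sum_{r\notin S}\frac{\det A[S\cup\{r\},C\cup\{j\}]^2}{\det A[S,C]^2},\]
and the right-hand side depends only on $S\sim\mu_{i-1}$. The steps are then:
\begin{enumerate}
\item Taking expectations cancels the denominator, leaving $\det B[C,C]$ in its place.
\item Every $i$-set arises as $S\cup\{r\}$ exactly $i$ times.
\item Cauchy--Binet for $A[:,C\cup\{j\}]$ gives $\E U_{ij}^2\le i\det B[C\cup\{j\},C\cup\{j\}]/\det B[C,C]$.
\item Fischer's inequality $\det B[C\cup\{j\},C\cup\{j\}]\le \det B[C,C]\,B_{jj}$ gives $\E U_{ij}^2\le i\|Ae_j\|^2$.
\end{enumerate}
The constant here is exactly one, which the $\frac16$ in the statement requires; a bound of the form ``$\lesssim$'' would not suffice. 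Summing over $i\le j$ gives $\E\|U\|_F^2\le\frac{n^3+3n^2+2n}6\|A\|_{1\to2}^2$, and your growth step then goes through.
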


\begin{theorem}[\Cref{a7} \& \Cref{a8} / Lower bounds over all pivoting strategies]
Let $n = 2^k$, $H \in \GL_n(\R)$ be the Sylvester Hadamard matrix, and $P$ be any permutation matrix such that $PH$ has an LU factorization. Then $\|L\|_F^2 =n^{\log_2 3}$ and $\|U\|_F^2 = n^{\log_2 6}$. Furthermore, for any Hadamard matrix $A$ of any dimension $n >1$ and any permutation matrices $P$ and $Q$ such that $PAQ$ has an LU factorization, we have $\|L\|_F^2 \ge \frac{1}{e} n \log n$ and $\|U\|_F^2 \ge \frac{e}{2} n^2$.
\end{theorem}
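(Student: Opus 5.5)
The plan is to express every entry of $L$ and $U$ through minors of $A$ and then use the orthogonality $A^\top A=nI$ of a Hadamard matrix. Write $d_k=(\det A_k)^2$, where $A_k$ is the leading $k\times k$ submatrix of $PAQ$ (a row/column-permuted Hadamard matrix is still Hadamard). Then $U_{kk}^2=d_k/d_{k-1}$. From $A=LU$ and $A^\top A=nI$ we get $L^\top L=n(UU^\top)^{-1}$. Hence
\[
\|L\|_F^2=n\,\|U^{-1}\|_F^2\ \ge\ n\sum_k U_{kk}^{-2}.
\]
Jacobi's complementary minor identity, applied with $A^{-1}=A^\top/n$, gives $|\det A_{n-k}|=n^{n/2-k}\,|\det M_k|$, where $M_k$ is the complementary trailing $k\times k$ block. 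Hadamard's inequality then yields the two constraints
\[
d_k\le k^k,\qquad d_{n-k}\le n^{n-2k}k^k .
\]
These are the only structural facts I intend to use for the general Hadamard bounds.

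\emph{General Hadamard bounds.} For $L$, I need a lower bound on $\sum_k 1/U_{kk}^2=\sum_k d_{k-1}/d_k$ subject to $d_0=1$ and $d_k\le k^k$. By AM--GM applied over blocks, or an Abel-summation/majorization argument, I expect the sum to be minimized when every constraint is tight. In that case $U_{kk}^2=k^k/(k-1)^{k-1}\le ek$, so
\[
\sum_k U_{kk}^{-2}\ \ge\ \sum_k \frac{1}{ek}\ \approx\ \frac{\log n}{e},
\]
and multiplying by $n$ gives $\|L\|_F^2\ge \frac1e n\log n$. For $U$, I use $\|U\|_F^2\ge\sum_k U_{kk}^2$ together with the tail constraint. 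This constraint forces the late pivots to be large: $U_{n-k+1,n-k+1}^2\gtrsim n^2/(ek)$ in an averaged sense, while $\prod_k U_{kk}^2=n^n$. Optimizing $\sum_k U_{kk}^2$ under both families of constraints should give the $\frac e2 n^2$ bound, possibly after adding the first row of $U$, which contributes $n$. The main obstacle is making this optimization rigorous. Since the constraints are on partial products, the minimizer of a sum of ratios is not obviously the extremal sequence. I would first pass to logarithms $x_k=\log d_k$, use convexity of $t\mapsto e^{t}$ to show that a local exchange argument pushes each $x_k$ to its upper bound, and then evaluate the resulting sums asymptotically. The constants $1/e$ and $e/2$ should come from $k^k/(k-1)^{k-1}\sim ek$.

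\emph{Sylvester case.} Here I would argue by induction on $k$ using $H_{2m}=\left[\begin{smallmatrix}H_m&H_m\\H_m&-H_m\end{smallmatrix}\right]$. The key lemma is that for the Sylvester matrix the relevant minor ratios $\det H[S+\{i\},[j]]/\det H[S,[j-1]]$ take values depending only on a simple combinatorial invariant, independent of which admissible row order $P$ is used. Concretely, I claim every Schur complement of $PH$ is, up to row permutation and signs, a Kronecker-type matrix with entries in $\{0,\pm 2^r\}$. I would prove this by identifying the rows of $H_{2^k}$ with $\mathbb F_2^k$ and the first $j$ columns with a subspace: the Schur complements are then characters restricted to cosets, scaled by powers of $2$. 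The count of nonzero entries and their sizes then satisfies the recursions $\|L\|_F^2(2m)=3\|L\|_F^2(m)$ and $\|U\|_F^2(2m)=6\|U\|_F^2(m)$. The intuition is that each level splits into a copy contributing once and a copy contributing twice (respectively, once and with the factor $2^2=4$ from doubled pivots plus one), which gives $n^{\log_2 3}$ and $n^{\log_2 6}$. Proving permutation-independence is the delicate part of this case. I would handle it by showing that the value of each minor ratio depends only on whether the new row lies in the $\mathbb F_2$-span-coset structure of the previously chosen rows, restricted to the first $j$ coordinates.
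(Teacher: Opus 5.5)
There is a genuine gap in the Sylvester part, and a smaller error in the general bound on $\|U\|_F$. Your bound on $\|L\|_F$ for general Hadamard matrices is the paper's own argument: $\|L\|_F^2=n\|U^{-1}\|_F^2$, the prefix constraints $d_k\le k^k$, convexity, and a negative gradient at the all-tight corner.

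\emph{Sylvester case.} The whole content of this half is that $\|L\|_F$ and $\|U\|_F$ do not depend on $P$. You defer exactly this to an unproved claim, and the mechanism you propose is not right.
\begin{itemize}
\item The leading $j$ columns of $H_k$ correspond to a subspace of $\mathbb F_2^k$ only when $j$ is a power of two; the binary vectors of $0,1,2$ do not form one.
\item The intermediate Schur complements are not uniformly scaled sign patterns. Ordering the rows of $H_3$ as $(1,4,6,3,5,8,2,7)$ makes row $4$ of $U$ equal to $(-4,-2,2,-2,-2)$ in columns $4$ through $8$.
\end{itemize}
So there is no evident reason why counting nonzero entries and their sizes should produce your recursions.

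The missing idea is a simple structural fact. Write the rows of $H_k$ as the pairs $(\bm h_i,\bm h_i)$ and $(\bm h_i,-\bm h_i)$, where the $\bm h_i$ are the rows of $H_{k-1}$. If $PH_k$ has an LU factorization, its first $n/2$ rows contain exactly one row of each pair. Otherwise two of them agree up to sign on the first $n/2$ columns, and the leading $n/2$ minor vanishes.

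Hence, allowing row signs (which do not change the norms), $A=\bmat{\wt H&\wt D\wt H\\ \wh H&\wh D\wh H}$. Here $\wh H=Q\wt H$ for a signed permutation $Q$, and $Q\wt D\wt H=-\wh D\wh H$. This gives
\[
A=\bmat{I&0\\Q&I}\bmat{\wt H&\wt D\wt H\\0&2\wh D\wh H},\qquad L=\bmat{\wt L&0\\Q\wt L&\wh L},\qquad L^{-1}=\bmat{\wt L^{-1}&0\\-\wh L^{-1}Q&\wh L^{-1}}.
\]
Since $Q$ is orthogonal, a joint induction over signed row permutations of Sylvester matrices gives $\|L\|_F^2=\|L^{-1}\|_F^2=3^k$. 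Your own identity $L^\top L=n(UU^\top)^{-1}$, i.e.\ $UU^\top=nL^{-1}L^{-\top}$, then gives $\|U\|_F^2=n\|L^{-1}\|_F^2=6^k$ with no separate analysis of $U$.

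\emph{General bound on $\|U\|_F$.} Pushing every $x_k$ to its upper bound is valid for the prefix constraints alone. Because $k\log k$ is convex, the gradient of $\sum_k e^{x_k-x_{k-1}}$ is negative at the all-tight point. That is the paper's proof. There the $n-1$ off-diagonal first-row entries of $U$ are not optional, since $\sum_{k=1}^n k^k/(k-1)^{k-1}<\frac e2 n^2$.

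With both of your constraint families imposed, the exchange argument breaks. The binding bound switches at $k=n/2$ from $k\log k$ to $(2k-n)\log n+(n-k)\log(n-k)$, and the slope drops there, from about $\log(n/2)+1$ to about $\log(2n)-1$. At this concave kink the gradient is positive for large $n$, so the all-tight point is not the minimizer. Use one family only.

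The tail family alone actually does better. The logarithm of the product of the last $k$ squared pivots is at least $2k\log n-k\log k$, which is concave in $k$. The mirrored convexity argument then gives
\[
\sum_k U_{kk}^2\ \ge\ n^2\sum_{k=1}^n\frac{(k-1)^{k-1}}{k^k}\ \ge\ \frac1e n^2\log n,
\]
which is already at least $\frac e2 n^2$ for $n\ge 3$. This makes your heuristic $U_{n-k+1,n-k+1}^2\approx n^2/(ek)$ rigorous. Equivalently, $A=nL^{-\top}U^{-\top}$ gives $\|U_{JAJ}\|_F^2=n\|L_A\|_F^2$ for the reversal permutation $J$, so the $L$ bound transfers to $U$ with an extra factor $n$. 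The constant $\frac e2$, by contrast, comes only from the prefix family.
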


While the existence of a row permutation with polynomial growth is an encouraging result, we also note that, in general, finding the row permutation that minimizes the growth factor is NP-complete. Furthermore, we show that randomized partial pivoting, the best candidate for a fast row pivoting strategy with polynomial growth factor, can have a growth factor that is nearly exponential.

\begin{theorem}[\Cref{a9} / Hardness of minimizing growth]\label{a10}
Let $\mathcal L$ be the language consisting of tuples $(A,g)\in\Mat_n(\Z)\times\frac12\Z_+$ for which there exists a permutation $P$ such that $\growth(PA)\le g$.
$\mathcal L$ is NP-complete.
\end{theorem}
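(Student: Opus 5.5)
Membership in NP is the easy direction. The certificate is the permutation $P$. Given $P$, we run exact rational Gaussian elimination on $PA$. By \eqref{a0}, every entry of every iterate $A\at k$ is a ratio of minors of $PA$: $a\at k_{ij}=\det (PA)[[k]\cup\{k+i\},[k]\cup\{k+j\}]/\det (PA)[[k],[k]]$. Hadamard's bound on these minors shows that all intermediate numbers have bit length polynomial in the input size. So we can check in polynomial time that the LU factorization exists (all leading minors are nonzero), compute $\maxnorm L$, $\maxnorm U$ and $\maxnorm A$ exactly, and test $\max\{\maxnorm L,\maxnorm U/\maxnorm A\}\le g$.

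For NP-hardness I would reduce from a problem whose solutions are themselves permutations. The natural candidate is a Hamiltonian path problem, or 3-dimensional matching / exact cover. The plan is to build an integer matrix $A$ and threshold $g$ such that a row order with small growth exists if and only if the combinatorial instance has a solution.

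The main tool is the structure behind the partial pivoting examples above. If $A$ is partially pivoted, with all multipliers of modulus at most $1$, then growth is at most $2^{n-1}$. In sparse Wilkinson-type gadgets, however, a single "wrong" choice forces a multiplier larger than $1$, or a doubling of entries in $U$. I would therefore design the matrix so that:
\begin{enumerate}
\item[(i)] any row order that is not a valid solution produces a multiplier of modulus $\ge 2$, or a $U$ entry exceeding $g\maxnorm A$, so that its growth exceeds $g$;
\item[(ii)] any valid solution yields $L$ with all entries in $\{0,\pm1\}$ and $U$ with entries bounded by $g\maxnorm A$, for a small constant $g$ such as $1$ or $3/2$. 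The statement's choice of $g\in\frac12\Z_+$ suggests the threshold really is a small half-integer.
\end{enumerate}

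Concretely, I would take $A$ to be a $0/\pm1$ matrix, block-structured with one block per vertex or element and one column per "time step" or position. Putting row $i$ in position $k$ then corresponds to selecting an item at step $k$. The leading minors $\det (PA)[[k],[k]]$ must be nonzero for the LU factorization to exist at all. This constraint alone can encode feasibility: for a sufficiently sparse $0/1$ matrix with a triangular-type structure, the leading minors are nonzero exactly when each prefix admits a perfect matching in the bipartite support. Growth then separates near-solutions: in those cases the Schur complements produce entries of modulus $2$ or more.

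The main obstacle is step (i): controlling the growth under \emph{every} bad permutation. The Schur complement entries depend globally on the prefix, so the gadgets must be analyzed through the minor formula for $a\at k_{ij}$ above. I would first try a reduction where determinants of prefixes are forced to be $0$ for all bad orders. This gives infinite growth and avoids quantitative estimates entirely. Growth bounds would then only be needed for good orders, where the triangular structure makes $L$ and $U$ explicit.
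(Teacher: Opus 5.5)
The NP-membership half of your proposal is correct and matches the paper. Entries of the iterates are ratios of minors of $PA$, and Hadamard's inequality bounds their bit length. So given $P$, the matrices $L$, $U$ and $\growth(PA)$ can be computed and compared with $g$ exactly in polynomial time.

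\textbf{The gap is NP-hardness: you never construct a reduction.} Naming Hamiltonian path or exact cover and listing properties (i) and (ii) is a plan, not a proof. Property (i), which you yourself flag as the obstacle, is the entire content of the hardness claim.

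Moreover, the fallback you propose cannot work. Whether some row order makes all leading minors nonzero is decidable in polynomial time by running partial pivoting. Partial pivoting stalls at step $k$ only if the first $k$ columns of $A$ are linearly dependent, and in that case no row order can make the $k\times k$ leading minor nonzero. So partial pivoting finds such an order whenever one exists; for nonsingular $A$ one always exists, with growth at most $2^{n-1}$. Suppose every bad order had a vanishing prefix minor. Then, however you encode this (e.g.\ via perfect matchings in prefixes of the support), partial pivoting would decide your source problem, which is impossible unless $\mathrm{P}=\mathrm{NP}$. You therefore cannot avoid proving a quantitative lower bound on the growth of orders whose leading minors are all nonzero.

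\textbf{The missing idea, which the paper supplies, is a reduction from 3-SAT.} In it, the requirement $\maxnorm{L}<\frac32$ forces the prefix of the order, so arbitrary bad permutations never need to be analyzed.

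In $A_f$, which has $\maxnorm{A_f}=1$, the first column and each of the $n$ identity columns carry entries $1$ in the intended pivot rows and a $\frac12$ in one special row. Pivoting on a $\frac12$ creates a multiplier $2$. Hence the first pivot must be row $1$, and the pivot in column $1+i$ must be row $1+i$ or row $1+i+n$. That binary choice encodes an assignment $\bm b$.

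The next pivot is then forced, because the special row is the only row with a nonzero entry in column $n+2$. Its row of $U$ contains $\frac32\one^\top-\frac12\one^\top M_{\bm b}$, whose $j$th entry equals $\frac32$ exactly when clause $j$ is unsatisfied. Since growth is a maximum, this single entry gives $\growth(PA_f)\ge\frac32$ for every $P$ when $f$ is unsatisfiable, whatever the later pivots are.

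Conversely, for a satisfying $\bm b$ an explicit factorization gives growth exactly $1$. Doubling $A_f$ makes it integral without changing growth, so $g=1$ works.

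Your outline needs some gadget of this kind: cheap forcing of the order through the bound on $L$, followed by one large entry of $U$ that computes a clause count.
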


\begin{theorem}[\Cref{a11} / Large growth of randomized partial pivoting]
For every $n$ and $p\in(0,\infty)$, there is a matrix $A\in\GL_n(\R)$ such that if $P \sim\mathrm{RandPP}_p(A)$, then $\growth(PA)\ge e^{\Omega_p(n/\log n)}$ with high probability.
\end{theorem}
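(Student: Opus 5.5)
We plan to build a matrix on which randomized partial pivoting almost surely follows a Wilkinson-type pattern for most steps, so the growth compounds geometrically. The starting point is that the weights in \eqref{a2} are exactly the magnitudes of the first-column entries of the current Schur complement: if $S$ has been eliminated, then $\det A[S+\{i\},[k+1]]/\det A[S,[k]]$ is, up to sign, the entry of $A^{(k)}$ in the row coming from $i$ and the first column. So $\RandPP_p$ picks row $i$ with probability proportional to $|a^{(k)}_{i1}|^p$. A single Wilkinson matrix is not enough, since its first column has many entries of equal magnitude. We therefore build the matrix so that at each step one entry dominates in the first column by a large factor $M$. Then the probability of a wrong pivot is at most roughly $n M^{-p}$, and a union bound over $n$ steps handles all of them.

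The construction we have in mind is a block or ``scaled'' Wilkinson matrix. We take a recursion in which each step doubles (or multiplies by a constant $c>1$) some entry in the last column. We rescale rows so that the intended pivot row has magnitude $M$ times every competitor in its column. The tension is that a domination factor $M$ erodes the growth per step: the multiplier $a_{i1}/a_{11}$ becomes $1/M$, so that step's contribution to growth is only $1+1/M$. To restore a constant factor, we group the steps. We use blocks of size $m\approx \log n$ (more precisely $C_p\log n$). Within a block, the entries are arranged so that the pivots are forced with margin $n^{2/p}$, and the block as a whole multiplies the tracked entry by a constant factor $c>1$. Over $n/m$ blocks this gives growth $c^{n/m}=e^{\Omega_p(n/\log n)}$, which matches the target rate and explains the $\log n$ loss. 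We would design the blocks inductively, using the shift invariance $A^{(k+\ell)}=(A^{(k)})^{(\ell)}$ to reduce everything to analysing one block applied to a generic trailing matrix. Concretely, one block could be a lower-triangular gadget with geometrically decreasing scales $M^{-j}$ down the diagonal, arranged so that after $m$ eliminations the residual in the ``growth column'' is a signed sum of $m$ terms reinforcing to a constant multiple.

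The steps are as follows. First, verify the Schur-complement interpretation of the $\RandPP_p$ weights. Second, define the block gadget and compute its Schur complement exactly under the intended pivot order. Third, bound the probability of deviation at each step by $nM^{-p}\le n^{-1}$ and take a union bound, giving high probability. Fourth, lower bound $\maxnorm{U}/\maxnorm{A}$ by the final tracked entry.

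The main obstacle is the second step: designing a gadget in which the intended pivot dominates by a polynomial factor at every step while the cumulative effect over $O(\log n)$ steps is still a constant-factor amplification, and in which later blocks do not disturb this dominance. If the direct design fails, our fallback is a recursive construction. We would take the domination ratio to be a large constant depending on $p$, accept an error probability of order $M^{-p}$ per step, and argue that wrong pivots within a block only delay the amplification rather than destroy it. This would need the block structure to be robust to local permutations, which we would verify via the same shift-invariance argument.
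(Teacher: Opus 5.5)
\textbf{The forced-pivot plan cannot reach the target rate.} Your Step 1 is correct and is the same observation the paper uses: the $\RandPP_p$ weights are $|a^{(k)}_{i1}|^p$. The core plan, however, forces a single pivot sequence with margin $M=n^{2/p}$ and then union-bounds, and this cannot produce the stated growth. No gadget in Step 2 can repair it.

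Suppose that at step $k$ every competitor in the first column of $A^{(k)}$ is at most $\epsilon_k$ times the chosen pivot. Then every multiplier satisfies $|\ell_i|\le\epsilon_k$, so $\|A^{(k+1)}\|_{\max}\le(1+\epsilon_k)\|A^{(k)}\|_{\max}$. The rows of $U$ are first rows of the $A^{(k)}$, hence $\growth(PA)\le\prod_k(1+\epsilon_k)\le\exp(\sum_k\epsilon_k)$.

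With $\epsilon_k\le n^{-2/p}$, a block of $C_p\log n$ steps multiplies every entry by at most $(1+n^{-2/p})^{C_p\log n}=1+o(1)$. So the constant-factor block you want does not exist, and the total growth is at most $e^{n^{1-2/p}}$, which is bounded for $p\le 2$. Grouping steps cannot evade a bound that holds step by step.

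The problem is not this particular margin either. Suppose some fixed trajectory is followed with probability at least $1-\eta$, where $\eta<\frac12$. The deviation probability at step $k$ is at least $\epsilon_k^p/(1+\epsilon_k^p)$. These probabilities must each be below $\frac12$ and must sum to $O(1)$, so $\epsilon_k<1$ and $\sum_k\epsilon_k^p=O(1)$. This forces growth $O(1)$ when $p\le1$. When $p>1$, H\"older gives growth $\exp(O(n^{1-1/p}))=e^{o(n/\log n)}$. So the randomness of $\RandPP_p$ cannot be suppressed; it has to be tolerated.

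\textbf{The fallback is where the real difficulty lies.} Your fallback uses a constant margin and argues that wrong pivots only ``delay'' amplification. You give no mechanism for this. Shift invariance does not help, because after a wrong pivot the Schur complement is not a shifted copy of the intended one. The paper supplies two ideas you are missing.

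First, the $\RandPP_p$ law depends only on ratios of minors of leading column blocks, so it is unchanged under $A\mapsto AR$ for upper triangular $R$. Take $A=Q$ from $N=QR$, where $N$ is lower bidiagonal with diagonal $z=e^{-1/(pn^\alpha)}$ and subdiagonal $1$. Then the pivot law equals that of $N$. In $N$ only two rows are ever eligible, with weights $z^{(1+\delta)p}$ and $1$. Neither choice is ever forced: each has probability at least $1/(e+1)$ while $\delta\le n^\alpha$. The single unpivoted row index performs a renewal chain whose excursions exceed $n^\alpha$ with probability at most $e^{-n^\alpha/(e+1)}$. Hence $\pi_n\ge n-n^\alpha$ with high probability.

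Second, for orthogonal $PQ$ one has $U_{nn}=1/(PQ)_{nn}=1/Q_{\pi_n,n}$, so the growth depends only on the last pivot. Since $Q\bm e_n\propto(1,-z,z^2,\dots)$, this gives $|U_{nn}|\ge z^{-(n-n^\alpha-1)}$. Taking $n^\alpha=8\log n$ yields growth roughly $e^{n/(8p\log n)}$. The $\log n$ loss comes from tuning $1-z$ so that excursions stay $O(\log n)$, not from blocks of forced pivots.
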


Finally, we turn our attention to pivoting strategies that allow both row and column permutations. It was long thought that rook and complete pivoting were both likely to have a polynomial growth factor. Known upper bounds for both are currently quasi-polynomial, while the known lower bounds for both are sub-quadratic. Here we prove the existence of matrices for which rook and complete pivoting have quasi-polynomial growth.
\begin{theorem}[\Cref{a12} / Quasi-polynomial growth of rook pivoting]
For every $n$, there is a $A\in\RP_n(\R)$ with $\growth(A)\ge n^{(\frac14-o(1))\log_2 n}$.
\end{theorem}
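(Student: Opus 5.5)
The plan is a recursive doubling construction. I will build matrices $A_k\in\RP_{2^k}(\R)$ with $\maxnorm{A_k}$ bounded by an absolute constant and with last pivot $\abs{a^{(2^k-1)}_{11}}\ge \prod_{j<k} c\,2^{j/2}$ for a fixed $c>0$. Since $\sum_{j<k} j/2 = k^2/4 - O(k)$, this gives
\[
\growth(A_k)\ \ge\ 2^{k^2/4-O(k)} = n^{(\frac14-o(1))\log_2 n}
\qquad\text{for } n=2^k .
\]
For general $n$ I would embed $A_k$, with $2^k\le n<2^{k+1}$, as the trailing block of a matrix whose first $n-2^k$ pivots are $1$ (a direct sum with an identity). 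Rook pivoting is preserved under direct sums with a leading block of unit pivots, as long as the entries of $A_k$ are at most $1$. The loss in the exponent is absorbed in the $o(1)$.

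The inductive step takes $B=A_k$ of size $m=2^k$ and sets
\[
A=\bmat{B & X\\ Y & Z},
\]
where $X$, $Y$, $Z$ are built from a Hadamard/orthogonal pattern. By shift-invariance, the first $m$ steps of elimination on $A$ reproduce the steps on $B$ in the top-left corner. The remaining matrix is $A^{(m)}=Z-YB^{-1}X$, and I want it to be a scaled copy $\lambda B'$ of a matrix $B'$ of the same type, with $\lambda\approx c\sqrt m$ relative to the previous last pivot.

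I would pick $X$ and $Y$ so that in each intermediate Schur complement, the rows coming from $Y$ and the columns coming from $X$ are "flat". Concretely, I would take them as products of the current pivot with $\pm1$ sign patterns from a Sylvester Hadamard matrix, in the spirit of the $\|L\|_F,\|U\|_F$ computations for Hadamard matrices in \Cref{a7}. Then every entry of the extended row and column at step $j$ has magnitude at most the $j$-th pivot of $B$, which gives the rook condition for $A$ during the first $m$ steps. The key heuristic is the source of the $\sqrt m$: an $m$-term signed sum of orthogonal contributions, each of the size of the current pivot, produces an entry of size $\sqrt m$ times the pivot. This is Wilkinson's Hadamard-inequality argument run in reverse, and it explains the constant $\frac14$ as opposed to Foster's $\frac34$. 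I would then pick $Z$ to cancel the off-pattern part of $YB^{-1}X$, so that $A^{(m)}$ is exactly $\lambda B'$.

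The verification has three parts, carried out in order. First, compute $A^{(j)}$ for $j\le m$ explicitly in block form, using the Hadamard structure to show the appended rows and columns stay bounded by the pivot. Second, check that $\maxnorm{A}$ stays $O(1)$ after renormalizing, and that $A^{(m)}=\lambda B'$ with $B'$ rook pivoted by induction. Third, insert small perturbations $\eps$ where ties occur, so that the rook inequalities and nonsingularity are robust; the perturbations cost only constant factors per level, which again goes into the $o(1)$.

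I expect the main obstacle to be the first part: keeping the rook condition for the extended rows and columns at every intermediate step while still getting constructive $\sqrt m$ accumulation at step $m$. These two requirements pull in opposite directions. My first attempt would make the recursion self-similar, with $A_{k+1}$ built from $A_k\otimes H_2$-type blocks together with a diagonal rescaling. This reduces the intermediate checks to the inductive hypothesis plus one explicit $2\times2$ block computation. If exact $\sqrt m$ gain is not attainable, I would settle for $m^{1/2-o(1)}$ per level, which still yields the claimed exponent.
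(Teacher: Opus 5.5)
\textbf{There is a genuine gap.} You never pin down $X$, $Y$, $Z$, and the mechanism you describe for them cannot work. So the step you flag as the main obstacle is in fact the whole proof.

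\textbf{Why the flat Hadamard mechanism fails.} Write $A=\bmat{B&X\\Y&Z}=LU$ with $B=L_BU_B$, $L_{21}=YU_B^{-1}$, $U_{12}=L_B^{-1}X$, and let $p_j$ be the pivots of $B$. At step $j$, the appended row is row $j$ of $U_{12}$, and the appended column is $p_j$ times column $j$ of $L_{21}$. Your ``flat'' choice is $L_{21}=S$ and $U_{12}=\diag(p_1,\ldots,p_m)S'$, with Hadamard sign patterns ($S^\top S=S'S'^\top=mI$). It gives
\[
\norm{Y\bm e_m}^2=m\norm{U_B\bm e_m}^2\ge m\,p_m^2
\qquad\text{and}\qquad
\norm{\bm e_m^\top X}^2=m\sum_{j}p_j^2(L_B)_{mj}^2\ge m\,p_m^2 .
\]
So $X$ and $Y$ already contain entries of size $\abs{p_m}$. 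That is the last pivot of $B=A_k$, i.e.\ exactly the growth you are trying to produce, contradicting $\maxnorm{A}=O(1)$. Nor can $Z$ ``cancel the off-pattern part'' of $YB^{-1}X$ at scale $\lambda$, since its entries are $O(1)$.

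\textbf{Why the self-similar fallback fails.} Take $A=\bmat{aB&bB\\cB&dB}$. At step $j$ the pivot is $ap_j$, while the appended row and column contain $bp_j$ and $cp_j$. Rook pivoting therefore forces $\abs b,\abs c\le\abs a$. Then $\maxnorm{(d-bc/a)B}\le(\abs a+\abs d)\maxnorm B\le 2\maxnorm A$, so the growth at most doubles per level, which is only polynomial overall. Diagonal rescalings $\bmat{B&BD\\D'B&Z}$ behave the same way: the diagonal entries of $D,D'$ must have magnitude at most $1$, and the Schur complement $Z-D'BD$ is entrywise at most $2\maxnorm A$. The underlying problem is that putting the previous growth matrix in the top-left corner places its large pivots where the appended rows and columns cannot exploit them without large entries in $A$.

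\textbf{What the paper does instead.} It reverses the roles: tiny, flat pivots come first, and the previous matrix sits in an off-diagonal block,
\[
X_{k+1}=\bmat{s_kI&Q_k^\top\\-X_kQ_k&0},\qquad Q_k\sim\Haar(\orthog(2^k)),\quad Z=0 .
\]
Because the top-left block is diagonal, the residual rows and columns during the first $2^k$ steps are just the untouched rows of $Q_k^\top$ and columns of $-X_kQ_k$. Rook pivoting then reduces, together with induction, to $\maxnorm{Q_k},\maxnorm{X_kQ_k}\le s_k$. Meanwhile the Schur complement is exactly $X_k/s_k$, so the growth is multiplied by $1/s_k$. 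These bounds require the fixed previous matrix $X_k$ (row norms at most $\sqrt2\max_{j<k}s_j$) to be delocalized against $Q_k$. \Cref{a21} supplies this with positive probability for $s_k=4\sqrt{k/2^k}\max_{j<k}s_j$, giving a gain of about $\sqrt{2^k/k}$ per level and $2^{K^2/4-O(K\log K)}$ overall.

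Sylvester matrices would not work even in this architecture, because their Kronecker structure realigns rather than spreads. Take $Q_j=2^{-j/2}H_j$ at every level. For $i\le 2^{k-1}$, row $i$ of $X_k$ is $[s_{k-1}\bm e_i^\top,\ 2^{-(k-1)/2}\bm e_i^\top H_{k-1}]$. Since $H_k=\bmat{H_{k-1}&H_{k-1}\\H_{k-1}&-H_{k-1}}$ and $H_{k-1}^2=2^{k-1}I$, the $(i,i)$ entry of $X_kQ_k$ is about $2^{-1/2}$, far above $s_k$.
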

\begin{theorem}[\Cref{a13} / Quasi-polynomial growth of complete pivoting]
For every $n$, there is a $A\in\CP_n(\R)$ with $\growth(A)\ge n^{(\frac1{16}-o(1))\log_2n}$.
\end{theorem}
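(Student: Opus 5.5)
Our plan is to reduce to the rook-pivoting construction (Theorem \ref{a12}) via an amplification argument in which completely pivoted matrices are built recursively. We would aim for a block-structured matrix whose Schur complements reproduce a scaled copy of a smaller matrix. This recursion would give growth multiplying by a factor $\approx m$ at each of roughly $\log_2 n$ levels, where $m$ is polynomial in the current dimension. Concretely, we first need a gadget: a matrix $G\in\CP_{N}(\R)$ with $\maxnorm{G}\le 1$ such that, after eliminating its first $N-d$ rows and columns, the Schur complement equals $c\,B$. Here $B$ is an arbitrary prescribed $d\times d$ matrix with $\maxnorm{B}\le 1$, the amplification is $c\ge N^{\alpha}$ for some fixed $\alpha>0$, and $N$ is polynomial in $d$. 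Iterating this gadget (placing a smaller such matrix as the target $B$) across $\Theta(\log n)$ levels, with dimensions growing geometrically, yields growth $\prod c_\ell = n^{\Theta(\log n)}$. Tracking constants should give the exponent $\frac1{16}\log_2 n$. We would plausibly lose a factor $4$ relative to the rook exponent $\frac14$, because complete pivoting requires the entire active submatrix to be dominated by the pivot, not just its row and column.

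Steps in order. First, we would build the gadget from Hadamard-type blocks. We use Sylvester Hadamard matrices, since they were already seen in Theorems \ref{a7}--\ref{a8} to have controlled, explicit $LU$ factors. The aim is a block $\begin{bmatrix} W & X\\ Y & Z\end{bmatrix}$ in which $W$ is a scaled Hadamard matrix (all entries $\pm1$) and $Y W^{-1}X$ evaluates to a large multiple of $B$ minus $Z$. Second, we verify the complete-pivoting condition at every intermediate step $k$. For the Hadamard part this is the known property that the Sylvester Hadamard matrix is completely pivoted in the natural order, with pivots $1,2,2,4,\dots$ up to $n$, i.e.\ with pivot growth equal to $n$. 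We would also need to check that the off-diagonal blocks never exceed the current pivot, which we would enforce by scaling $X,Y$ by small factors and absorbing the loss into $c$. Third, we compose the levels and show that the final Schur complement $B$ is itself completely pivoted. This is where we use shift-invariance: $A^{(k+\ell)}$ is step $\ell$ of elimination on $A^{(k)}$, which lets us inherit the condition inductively. Finally, we pad to arbitrary $n$ by taking the largest feasible construction and completing it with a diagonal block of tiny entries that does not disturb pivoting.

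The main obstacle is the second step: keeping every intermediate Schur complement entry bounded by the current pivot while the target block is being amplified. Complete pivoting forbids any entry larger than the pivot, so the amplification of $B$ cannot outpace the pivot growth of the Hadamard part. Our first attempt would be to make the amplification exactly proportional to the Hadamard pivot sequence. We would do this by taking $X,Y$ to be columns and rows of a Hadamard matrix tensored with $B$, so that $YW^{-1}X = \frac{1}{m}\,(\text{stuff})\otimes B$ stays aligned with the pivot magnitudes. We would then accept a constant-factor loss in the exponent, and the $\frac1{16}$ would come from needing, e.g., $N\approx d^{4}$ per level to keep everything dominated.
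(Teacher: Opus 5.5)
Your high-level architecture is the paper's: a recursive gadget whose trailing Schur complement is an amplified copy of a smaller completely pivoted matrix, with dimensions growing geometrically. The gap is the step you yourself flag as the main obstacle, namely keeping every intermediate active matrix dominated by its pivot. That step is essentially the whole theorem. You give no mechanism for it, and the concrete design you suggest cannot supply one.

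Write $\mathcal H_r$ for the Sylvester Hadamard matrix of order $r$, and take $W=\mathcal H_m$ eliminated in natural order, normalized so that $\|G\|_{\max}=1$.
\begin{itemize}
\item After $m-2^j$ steps, the residual of $W$ is exactly $\pm(m/2^j)\mathcal H_{2^j}$; for example, it is $-2\mathcal H_{m/2}$ after $m/2$ steps. The pivots are $2^{\mathrm{popcount}(k)}$, which keep collapsing rather than growing.
\item So complete pivoting forces the partially formed target block to have entries at most $m/2^j$ at that moment. The remaining $2^j$ steps change it by a matrix of rank at most $2^j$.
\item Take the least $j$ with $m/2^j\le c/2$. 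Then $B$ is within $\tfrac12$ entrywise of a matrix $R$ of rank less than $4m/c$.
\item Suppose $B$ contains an $m'\times m'$ Hadamard block $\mathcal H$. This happens for $B=\mathcal H_d$, and for your own previous-level gadget, which contains $\mathcal H_m$. Restricting $R$ to that block gives $(m')^2/2\le\langle\mathcal H,R\rangle\le\|\mathcal H\|\,\|R\|_*\le\tfrac32(m')^{3/2}\sqrt{\operatorname{rank}R}$, so $\operatorname{rank}R\ge m'/9$.
\item Hence $c<36m/m'$, which is $O(1)$ whenever $N=O(d)$.
\end{itemize}

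The same defect rules out reusing the rook gadget of Theorem~\ref{a12}. Its first $2^k$ pivots all equal $s_k$, yet they produce $X_k/s_k$, whose entries are at least $s_{k-1}/s_k\approx\sqrt2$. For large $k$ this far exceeds $2s_k$, and no step of complete pivoting can do that.

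The paper's $B(A)$ is designed exactly around this problem. First, two identity blocks are eliminated with pivot $1$, while a Haar rotation $Q$ and Lemma~\ref{a21} keep every other block $o(1)$. Then $\alpha^2W$ is eliminated, with $W$ from Lemma~\ref{a23}: an orthogonal, delocalized, completely pivoted matrix with $\|W^{(j)}\|_{\max}\gtrsim j/n^{5/4}$. These steadily growing pivots dominate the emerging target $\gamma^2Q^\top MQ\widehat A\approx\gamma^2\frac kn\widehat A$ and its Hanson--Wright fluctuations (Lemma~\ref{a44}).

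Your gadget specification is also off in both directions.

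It asks for more than is available. Nothing gives a gadget for an arbitrary $B$ with $\|B\|_{\max}\le1$; the paper's works only inside a spectral-norm window. The target enters as $\gamma A/\|A\|$, which is entrywise small only if $\|A\|\ge\alpha/4$. The gain $\gamma^2/\|A\|$ is large only if $\|A\|\le3\alpha$. So this window must be seeded with $A_0=(I+s\mathbf 1\mathbf 1^\top)/(1+s)$ and propagated (Lemma~\ref{a25}). Your plan never controls $\|B\|$.

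It also asks too little. If the gain at level $\ell$ is $N_\ell^{a}$, then ``$N$ polynomial in $d$'' does not yield quasi-polynomial growth. Your suggested $N\approx d^4$ also contradicts geometric dimension growth. With $N_\ell=N_0^{4^\ell}$ there are only $O(\log\log n)$ levels, and $\prod_\ell N_\ell^{a}\approx N_L^{4a/3}$ is polynomial.

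What is needed is $N\le Kd$ with gain $d^{a}$, which gives exponent $\frac{a}{2\log_2K}\log_2 n$. The paper has $K=4$ and gain $\gamma^2/(3\alpha)\approx n^{1/4}/\log^3 n$, so $a=\tfrac14$ and the exponent is $\tfrac1{16}$. Rook pivoting has $K=2$ and $a=\tfrac12$, giving $\tfrac14$. The $\tfrac1{16}$ has nothing to do with $N\approx d^4$.
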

This characterization of the asymptotic behavior of the growth factor under complete pivoting answers one of the oldest and most fundamental open questions in numerical analysis.

\section{Sparse Matrix Growth Under Partial Pivoting}

The growth factor under partial pivoting is known to be at most $2^{n-1}$, as the entries of successive iterates can only double in size at each of the $n-1$ steps of Gaussian elimination. Wilkinson showed that this upper bound is achieved by
\[A = \begin{pmatrix*}[r] 1 & 0\phantom{.} & \cdots & 0 & 1 \\ -1 & \ddots & \ddots & \vdots & \vdots \\
\vdots & \ddots & 1 &  0 & 1 \\
-1 & \cdots & -1 & 1 & 1 \\
-1 & \cdots & -1 & -1 & 1 \end{pmatrix*},\]
now commonly referred to as the Wilkinson matrix \cite{b11}. Later, Higham and Higham characterized the subset of $\PP_n(\R)$ that achieves the $2^{n-1}$ growth factor in the following theorem:

\begin{theorem}[{\cite[Theorem 2.2]{b12}}]\label{a14}
Every $A \in \PP_n(\R)$ with $\growth(A) = 2^{n-1}$ is of the form 
\[A = D M \begin{bmatrix} \widetilde T & \theta \bm d \end{bmatrix}, \qquad \text{where} \quad \widetilde T =  \begin{bmatrix} T  \\ 0 \end{bmatrix}, \]
$D \in \GL_n(\R)$ is a $\pm 1$ diagonal matrix, $M \in \GL_n(\R)$ is unit lower triangular with $M_{ij} = -1$ for $i>j$, $T \in \GL_{n-1}(\R)$ is an upper triangular matrix, $ \bm d= (1,2,4,\ldots,2^{n-1})^\top \in \R^n$, and $\theta$ is a scalar such that $\theta = |A_{1n}| = \maxnorm{A}$.
\end{theorem}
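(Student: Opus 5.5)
The approach is to run the elimination with two invariants in hand and then read the structure off from equality in each step. Normalize first. Set $\theta=\maxnorm A$ and take $D=\diag(\operatorname{sign}(a_{i1}))$, so that $D A$ has nonnegative first column. Under partial pivoting every multiplier satisfies $|\ell_{ij}|\le 1$, which gives the crude estimate $\maxnorm{A\at{k}}\le 2\maxnorm{A\at{k-1}}$. Growth exactly $2^{n-1}$ therefore forces equality at every step: $\maxnorm{A\at{k}}=2^k\theta$ for all $k$, and the maximal entry $|u_{nn}|=|a\at{n-1}_{11}|$ equals $2^{n-1}\theta$. (Note that $\maxnorm{L}\le1$ always, so the growth is realized by $U$.)

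Next I would unwind this equality backwards. Write the last pivot as a telescoped sum
\[
a\at{n-1}_{11}=a_{nn}-\sum_j \ell_{nj}u_{jn}.
\]
Each entry of column $n$ of $A\at k$ is at most $2^k\theta$ in modulus, and it is produced from column $n$ of $A\at{k-1}$ via $a\at{k}_{i,\cdot}=a\at{k-1}_{i+1,\cdot}-\ell\, a\at{k-1}_{1,\cdot}$. Equality can hold only if three things happen simultaneously for the relevant entries: (i) $|\ell|=1$; (ii) both terms have maximal modulus; (iii) the two terms carry opposite effective signs. An induction on $k$ should then show the following. All subdiagonal multipliers equal $-1$ after the sign normalization, i.e., $L=D M D'$-type with entries $\pm1$; since the relevant entries of the last column of every iterate must stay maximal, this holds for all $i>j$, not just for the entries feeding the final pivot. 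Moreover, the last column of $U$ satisfies $u_{kn}=\pm2^{k-1}\theta$ with consistent signs.

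Then translate back to $A$. From $A=LU$ with $L=DM$ (after absorbing signs into $D$), we get $U=M^{-1}D A$. The first $n-1$ columns of $U$ form $\widetilde T$, an upper triangular $T$ padded by a zero row; any nonsingular upper triangular $T$ is allowed, subject only to the pivoting inequalities. The last column of $U$ is $M^{-1}$ of the last column of $DA$. Requiring $u_{kn}=2^{k-1}\theta$ for all $k$ with $M$ lower triangular with $-1$'s means the last column of $U$ is exactly $\theta \bm d$. This gives $A=DM[\widetilde T\;\theta\bm d]$, and $|A_{1n}|=\theta$ is read off from the first row.

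The main obstacle is the propagation claim: that equality in the final entry forces every multiplier, not only some, to be $\pm1$ with the specific sign pattern. I expect the cleanest route to be an induction on $n$ using shift invariance. The matrix $A\at1$ has growth $2^{n-2}$ relative to $\maxnorm{A\at1}=2\theta$, so the inductive hypothesis gives its form. One then checks that lifting back one step forces the first column of $A$ to be $\pm$ the pivot; the sign pattern is handled by $D$. In the process it must be verified that the data left unconstrained (namely $T$) are indeed free apart from the pivoting constraints.
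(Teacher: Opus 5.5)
The paper gives no proof of this statement; it quotes it from Higham and Higham. The natural proof is the equality-case analysis you propose, and most of your skeleton is sound. The bound $|u_{kn}|\le 2^{k-1}\theta$ holds, and equality at $k=n$ forces every entry of the last column of every iterate to be extremal. Hence $|\ell_{ij}|=1$ for all $i>j$ and $|u_{kn}|=2^{k-1}\theta$ for all $k$.

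These moduli alone do not give the stated form, because a general unit lower triangular $\pm1$ matrix is not of the form $\Sigma M\Sigma$. The step that does give it is the sign bookkeeping, and you never carry it out; as written, it points the wrong way.

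\begin{itemize}
\item With your normalization $D=\diag(\operatorname{sign}a_{i1})$, the first-column multipliers of $DA$ are $|a_{i1}|/|a_{11}|=+1$, not $-1$. In the target form, the first column of $D^{-1}A$ is $T_{11}(1,-1,\dots,-1)^\top$, which is never nonnegative.
\item For the Wilkinson matrix your $D$ is $\diag(1,-1,\dots,-1)$. The last column of the $U$-factor $DU$ of $DA$ is then $(1,-2,-4,\dots,-2^{n-1})^\top\neq\theta\bm d$.
\item $L=DM$ is impossible for a unit lower triangular $L$ unless $D=I$.
\item ``Requiring $u_{kn}=2^{k-1}\theta$'' assumes exactly the positivity that has to be proved.
\end{itemize}

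The repair is to make your condition (iii) explicit and to normalize by the last column rather than the first. Put $\sigma_k=\operatorname{sign}(u_{kn})$ and $\Sigma=\diag(\sigma_1,\dots,\sigma_n)$. Since $|u_{kn}|=2^{k-1}\theta$ for every $k$, equality holds throughout
\[
|u_{kn}|=\Bigl|a_{kn}-\sum_{j<k}\ell_{kj}u_{jn}\Bigr|\le\theta+\sum_{j<k}|u_{jn}|.
\]
So each term $-\ell_{kj}u_{jn}$ has sign $\sigma_k$, that is, $\ell_{kj}=-\sigma_k\sigma_j$ for all $k>j$.

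This gives the conclusion:
\begin{itemize}
\item $L=\Sigma M\Sigma$, so $A=\Sigma M(\Sigma U)$.
\item $\Sigma U$ is nonsingular upper triangular, so its first $n-1$ columns form $\widetilde T$.
\item Its last column is $(|u_{kn}|)_k=\theta\bm d$.
\item $|a_{1n}|=|u_{1n}|=\theta$.
\end{itemize}
Thus $D=\Sigma$, which also equals $\diag(\operatorname{sign}a_{kn})$.

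The same caveat applies to your alternative induction on $n$. In the lifting step, the sign relation between $\ell_{i1}$ and the inductive $D_1$ must come from the last column:
\[
2\theta(D_1)_{i-1,i-1}=a^{(1)}_{i-1,n-1}=a_{in}-\ell_{i1}a_{1n}.
\]
This forces $\ell_{i1}=-(D_1)_{i-1,i-1}\operatorname{sign}(a_{1n})$, and you then take $D=\diag(\operatorname{sign}a_{1n},D_1)$.
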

The exponential growth factor exhibited by both the Wilkinson matrix and the class of matrices described in the above theorem leads to catastrophic errors in floating point arithmetic \cite{b21,b10}. Within the numerical analysis community, there has been an ongoing discussion as to whether such matrices ever arise in practice. Examples of exponential growth coming from application were produced by Wright for two-point boundary-value problems \cite{b22} and by Foster for quadrature for Volterra integral equations \cite{b23}. One notable aspect of Wright's example is that, unlike the Wilkinson matrix, the matrices in question were sparse, with at most three non-zero entries per row and growth factor approaching $2^{n/2}$.

Here, we make progress in two directions. First, we exactly characterize how sparse a matrix can be and still attain growth $2^{n-1}$. Second, we prove lower bounds for how large the growth factor of a matrix with at most $k$ non-zero entries per row and column can be.

\begin{theorem}\label{a4}
Let $n \ge 2$ and $A \in \PP_n(\R)$ with $\growth(A) = 2^{n-1}$. Then $\nnz(A) \ge 4n -4$, with equality achieved by the matrix
\[A =  \begin{pmatrix*}[r]
     -\tfrac{1}{2} & \tfrac{1}{2}    &    &   & 1\\
    \tfrac{1}{2} &    -1 & \ddots &   & \vdots \, \\
    \vdots \, &      & \ddots & \tfrac{1}{2} & 1\\
    \vdots \, &      &   & -1 & 1\\
    \tfrac{1}{2} &      &   &   & 1
  \end{pmatrix*}.\]
\end{theorem}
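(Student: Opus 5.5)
Both halves of the statement come from the structure in \Cref{a14}. For the lower bound, write $A = DM\begin{bmatrix}\widetilde T & \theta\bm d\end{bmatrix}$, and note that multiplying by $D$ does not change $\nnz(A)$. So it suffices to count nonzeros of $B = M\begin{bmatrix}\widetilde T & \theta \bm d\end{bmatrix}$.

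\textbf{The last column.} It equals $\theta M\bm d$. Since $M_{ij}=-1$ for $i>j$, its $i$th entry is
\[\theta\Big(2^{i-1}-\sum_{j<i}2^{j-1}\Big)=\theta,\]
so all $n$ entries are nonzero and equal to $\theta$ (with $\theta\neq 0$ because $A$ is nonsingular).

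\textbf{The first $n-1$ columns.} For $j\le n-1$, column $j$ of $B$ is $M\widetilde T\bm e_j$, where $\widetilde T\bm e_j$ is supported on rows $1,\dots,j$ with nonzero diagonal entry $T_{jj}$. Let $c_1,\dots,c_j$ be those entries and put $s_i=\sum_{l\le i}c_l$. Then the $i$th entry of $M\widetilde T\bm e_j$ is
\[c_i - s_{i-1}\quad (i\le j), \qquad -s_j \quad (i>j).\]
The goal is to show each of these columns has at least $3$ nonzeros; together with the last column this gives $3(n-1)+n=4n-3$, which is one too many. So the count needs refinement: the first column has exactly $2$ forced nonzeros, while every other column $2\le j\le n-1$ has at least $3$. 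This yields $2+3(n-2)+n=4n-4$.

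\textbf{Column $1$.} Its entries are $c_1$ followed by $-c_1$ repeated, so it has $n\ge 2$ nonzeros. That is at least $2$, as needed.

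\textbf{Columns $2\le j\le n-1$.} Here there are $n-j\ge1$ trailing entries, all equal to $-s_j$. The argument splits into two cases.
\begin{enumerate}
\item If $s_j\neq0$, every trailing entry is nonzero. Among the top $j$ entries, if all of $c_i-s_{i-1}$ vanished for $i\le j$, then $c_1=0$, contradicting invertibility of $T$ (since $T_{11}=c_1$ only in column $1$, this needs a different justification here). Instead, a direct argument works: if $c_i=s_{i-1}$ for all $i\le j$, then $s_i=2s_{i-1}$, so $s_1=c_1$ and $c_1-s_0=c_1$ must be $0$, giving $s_j=0$, a contradiction. Hence at least one of the top $j$ entries is nonzero. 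To reach $3$, use $c_j=T_{jj}\neq 0$ together with the fact that at most one of $c_j-s_{j-1}$ and $-s_j=-(s_{j-1}+c_j)$ can vanish given $c_j\ne0$. This two-case bookkeeping is the step I expect to be fiddliest.
\item If $s_j=0$, the trailing entries all vanish, and the count must come entirely from the top $j$ entries. Writing $c_i-s_{i-1}=0$ as $s_i=2s_{i-1}$, the zeros force a doubling pattern. Combined with $s_j=0$ and $c_j\ne0$, this should leave at least three indices where the pattern breaks.
\end{enumerate}
If this per-column bound fails in some case, the fallback is a global count: use invertibility of $A$ and the partial-pivoting condition $|a^{(k)}_{11}|\ge |a^{(k)}_{j1}|$ with exact doubling at every step. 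Exact doubling forces each iterate's first column to be nonzero in every entry, which forces fill and hence nonzeros in $A$.

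\textbf{Equality case.} Verify directly that the displayed matrix has $4n-4$ nonzeros:
\begin{itemize}
\item first column: $n$ entries;
\item columns $2,\dots,n-1$: two entries each, the $-1$ on the diagonal and the $\tfrac12$ above it;
\item last column: $n$ entries.
\end{itemize}
This gives $n+2(n-2)+n=4n-4$. Then run elimination by induction. At each step the pivot is the top-left entry, and partial pivoting holds because all first-column entries tie in magnitude. The iterate keeps the same shape with the last column doubled, ending with $U_{nn}=2^{n-1}$ and $\maxnorm{A}=1$. Alternatively, exhibit $T$, $\theta$, $D$ realizing this matrix in the form of \Cref{a14} and compute $U$ from that representation.
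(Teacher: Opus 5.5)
There is a genuine gap. Your central claim, that every column $2\le j\le n-1$ of $B$ has at least three nonzeros, is false. The equality matrix you verify at the end is itself a counterexample: you count exactly two entries in each of those columns ($\tfrac12$ above the diagonal and $-1$ on it).

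In your notation, that matrix has $D=I$, $\theta=1$ and $c_1=T_{11}=-\tfrac12$. For $2\le j\le n-1$, the column $\widetilde T\bm e_j$ has $c_{j-1}=\tfrac12$, $c_j=-\tfrac12$ and all other $c_i=0$. Hence $s_j=0$ and $B\bm e_j=\tfrac12\bm e_{j-1}-\bm e_j$. So your Case 2 prediction of at least three breaks in the doubling pattern fails: only $i=j-1$ and $i=j$ break it. Your Case 1 reasoning likewise certifies only two nonzeros when $j=n-1$, since there is then a single trailing entry. The total $2+3(n-2)+n$ equals $4n-4$ only because it coincides numerically with the correct split $n+2(n-2)+n$. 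The ``fallback'' global count is not an argument.

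The fix is to redistribute the count.
\begin{itemize}
\item You already showed $B\bm e_1=c_1(1,-1,\dots,-1)^\top$ has $n$ nonzeros, so use all $n$ of them.
\item It then suffices to prove that each middle column has at least two nonzeros, and your formulas give this directly.
\end{itemize}
Write the top entries as $s_i-2s_{i-1}$ for $i\le j$ (with $s_0=0$) and the trailing ones as $-s_j$, and suppose column $j$ had at most one nonzero.
\begin{itemize}
\item If $s_j\neq0$, all $n-j\ge1$ trailing entries are nonzero, so every top entry vanishes. This forces $s_i=2s_{i-1}$ for all $i\le j$, hence $s_j=0$, a contradiction.
\item If $s_j=0$ and all top entries vanish, then all $c_i=0$, contradicting $c_j=T_{jj}\neq0$.
\item If $s_j=0$ and exactly one top entry, at index $i_0$, is nonzero, then $s_{i_0-1}=0$ and $s_{i_0}=c_{i_0}\neq0$. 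Doubling thereafter gives $s_j=2^{j-i_0}s_{i_0}\neq0$, again a contradiction.
\end{itemize}
This is the paper's argument written in coordinates. The paper notes that $\widetilde T_{j+1,j}=\bm e_{j+1}^\top M^{-1}B\bm e_j=0$, where $\bm e_{j+1}^\top M^{-1}$ is nonzero exactly in its first $j+1$ entries. So a lone nonzero in $B\bm e_j$ would have to lie below row $j+1$, and since $M$ is lower triangular this would force $T_{jj}=0$. Your treatment of the last column and of the equality case is fine.
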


\begin{proof}
The theorem statement is immediate when $n = 2$. Now, let $n\ge 3$ and consider the desired lower bound $\nnz(A) \ge 4n -4$ using the formulation of $A \in \PP_n(\R)$ with $\mathrm{growth}(A) = 2^{n-1}$ provided by \Cref{a14}. In \Cref{a14}, the matrix $D$ simply re-signs rows, and leaves the sparsity structure unchanged. Therefore, we may instead analyze $B:= M \begin{bmatrix} \widetilde T & \theta \bm d \end{bmatrix}$.

The first column of $B$ equals $T_{11} M\bm e_1$ for $T_{11} \ne 0$, and the last column equals $\theta \bm 1$. This already accounts for $2n$ non-zero entries in $B$. It suffices to show that the remaining $n-2$ columns all have at least two non-zero entries. Consider column $j$ for $1<j<n$. Because $\widetilde T$ is upper triangular,
\[\widetilde T_{j+1,j} = \bm e_{j+1}^\top M^{-1} B \bm e_j = 0,\]
where $\bm e_{j+1}^\top M^{-1} = (2^{j-1}, 2^{j-2}, \dots, 2 , 1 , 1 , 0 , \dots , 0 )$ is non-zero in its first $j+1$ entries. Therefore, either $\nnz(B\bm e_j) \ge 2$ or the first $j+1$ entries of $B\bm e_j$ are zero. But, by the non-singularity of the leading $j\times j$ submatrix of $M$, the first $j$ entries of $\widetilde T \bm{e}_j$ are also zero. In particular, $T_{jj} = 0$, a contradiction.

Finally, we verify that the example matrix $A$ with $\nnz(A) = 4n-4$ in the theorem statement is partially pivoted and has growth factor $2^{n-1}$. Indeed, the iterates of this matrix are given by
\[A\at k =  \begin{pmatrix*}[r]
     -\tfrac{1}{2} & \tfrac{1}{2}    &    &   & 2^k\\
    \tfrac{1}{2} &    -1 & \ddots &   & \vdots \; \, \\
    \vdots \, &      & \ddots & \tfrac{1}{2} & 2^{k}\\
    \vdots \, &      &   & -1 & 2^{k}\\
    \tfrac{1}{2} &      &   &   & 2^{k}
  \end{pmatrix*} \quad \text{for} \quad k \in [n-3], \qquad A^{(n-2)} = \begin{pmatrix*}[r] -\frac{1}{2} & 2^{n-2} \\[0.25 ex] \frac{1}{2} & 2^{n-2} \end{pmatrix*}, \qquad  A^{(n-1)} = \begin{pmatrix} 2^{n-1} \end{pmatrix}.\]
\end{proof}

\begin{theorem}\label{a5}
Let $k >1$, $n > k$, $r_k$ be the unique solution to $x^{k} = x^{k-1} + \ldots + x + 1$ in the interval $(2-2^{1-k},2)$, $s_j = \sum_{i = 1}^{k+1-j} r_k^{-i}$ for $j = 1,\ldots,k$, and $A \in \Mat_n(\R)$ be given by 
\[A_{ij} = \begin{cases} 1 & i = j,\, j<n \\
-1 &1\le i-j\le k \\s_i & j = n, \,i \le k \\ 0 & \text{otherwise} \end{cases}.\]
Then $A \in \PP_n(\R)$, has at most $k+1$ non-zero entries in each row and column, and \[\growth(A) = r_k^{n-1}\ge\pare{2-2^{1-k}}^{n-1}.\] In addition, any $A \in \PP_n(\R)$ with at most two non-zero entries in each row and column has $\growth(A) \le 2$.
\end{theorem}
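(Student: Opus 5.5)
The first part is a direct computation of the elimination. The second part is a sparsity-structure invariant argument.

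\textbf{The banded construction.} Write $r=r_k$. The defining equation $r^k=r^{k-1}+\dots+1$, divided by $r^k$, gives $\sum_{i=1}^k r^{-i}=1$, so $s_1=1$. The $s_j$ are positive and decreasing, so $\maxnorm A=1$. I will also justify that $r_k$ lies in $(2-2^{1-k},2)$ and is unique there. For this I use $(x-1)(x^k-x^{k-1}-\dots-1)=x^{k+1}-2x^k+1$, Descartes' rule of signs, and a sign check of $x^{k+1}-2x^k+1$ at the two endpoints.

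\emph{Structure of the iterates.} The strictly lower part of $A$ is the banded matrix with $1$ on the diagonal and $-1$ on the first $k$ subdiagonals. In columns $j<n$, row $j$ has no nonzero entries to the right of the diagonal, except possibly in the last column. So eliminating with pivot $a\at{j-1}_{11}=1$ only adds row $j$ to rows $j+1,\dots,j+k$, and this modifies only the last column. Hence every iterate $A\at{m}$ with $m<n-1$ has the same banded lower-triangular part. Each first column consists of a pivot $1$ and entries in $\{0,-1\}$, so the partial pivoting condition holds, with ties. The final $1\times1$ step is trivial.

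\emph{The last column.} The last column of $U$ is $\bm x=L^{-1}(s_1,\dots,s_k,0,\dots,0)^\top$, where $L$ is the banded unit lower triangular matrix. So $x_i=s_i+\sum_{l=1}^{\min(k,i-1)}x_{i-l}$, with $s_i=0$ for $i>k$. I will verify $x_i=r^{i-1}$ by induction:
\begin{itemize}
\item For $i\le k$, expand $r^{i-1}=\sum_{m=1}^k r^{i-1-m}$. The terms $m<i$ give $\sum_{t=0}^{i-2}r^t$, and the terms $m\ge i$ give exactly $\sum_{q=1}^{k+1-i}r^{-q}=s_i$.
\item For $i>k$, this is just the defining recurrence.
\end{itemize}
Intermediate entries of the last column are partial sums of these positive quantities, so they are bounded by the final values. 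Therefore $\maxnorm U=r^{n-1}$, $\maxnorm L=1$, and $\det A=r^{n-1}\neq0$. This gives $A\in\PP_n(\R)$ and $\growth(A)=r^{n-1}$.

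\emph{Sparsity.} Row $i$ has at most $k$ subdiagonal entries plus either a diagonal entry or an $s_i$. For $i\le k$ there are at most $i-1+2\le k+1$ entries. Column $j<n$ has at most $1+k$ entries, and column $n$ has $k$.

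\textbf{The degree-two bound.} Let $M=\maxnorm A$. The first step is to show that the property ``at most two nonzeros per row and column'' is preserved by one elimination step. Row $1$ has at most one other nonzero, in some column $j$, and column $1$ has at most one other nonzero, in some row $i$. So the only possible fill is at $(i,j)$. Row $i$ loses its column-$1$ entry and gains at most $(i,j)$, and similarly for column $j$.

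I then use the bipartite row/column graph, which has maximum degree $2$, and prove the following invariant by induction. Every entry of the current iterate either has magnitude at most $M$, or is the unique nonzero in both its row and its column (an isolated $1\times1$ block), with magnitude at most $2M$. Isolated entries are never modified, since modification needs a nonzero pivot-row and pivot-column entry in the same row/column. Consider the update $a_{ij}-a_{i1}a_{1j}/a_{11}$, where $|a_{i1}/a_{11}|\le1$ by partial pivoting. There are two cases.
\begin{itemize}
\item If $a_{ij}=0$, the new entry has magnitude at most $|a_{1j}|\le M$. Note that $a_{1j}$ is not isolated, since $a_{11}\ne0$.
\item If $a_{ij}\neq0$, then rows $1,i$ and columns $1,j$ form a closed $4$-cycle. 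After elimination, $(i,j)$ is isolated, with magnitude at most $|a_{ij}|+|a_{1j}|\le 2M$.
\end{itemize}
Together with $\maxnorm L\le1$, this gives $\growth(A)\le2$.

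\textbf{Main obstacle.} The step needing the most care is the invariant in the degree-two argument. I must make sure that a previously doubled entry can never feed into a further update. The isolation observation is what handles this.
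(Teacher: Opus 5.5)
Your proposal is correct and follows essentially the same route as the paper. You identify the last column of $U$ as $(r_k^{i-1})_i$ using $\sum_{i=1}^k r_k^{-i}=1$ (by forward substitution, where the paper verifies the posited $LU$), and your degree-two bound uses the same invariant (an entry exceeding $\maxnorm{A}$ must be alone in its row and column and be at most $2\maxnorm{A}$) with the same case split on $a_{ij}\neq 0$, the only deviation being your elementary Descartes/sign-check localization of $r_k$ in place of the paper's appeal to Rouch\'e's theorem.
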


\begin{example}
When $n = 6$, the matrices associated with $k = 2$ and $k=3$, respectively, in \Cref{a5} are
\[\begin{pmatrix} 1& & & & & 1 \\ 
-1& 1& & & & 1/\varphi \\ 
-1&-1 &1 & & &  \\ 
& -1&-1 & 1& & \\ 
& & -1& -1& 1& \\
& & &-1 & -1&  \end{pmatrix}\qquad \text{and} \qquad \begin{pmatrix} 1& & & & & 1 \\ 
-1& 1& & & & 1/\psi +1/\psi^2 \\ 
-1&-1 &1 & & & 1/\psi  \\ 
-1& -1&-1 & 1& & \\ 
& -1& -1& -1& 1& \\
& &-1 &-1 & -1&  \end{pmatrix},\]
where $\varphi = \frac{1+\sqrt{5}}{2}$ is the golden ratio and $\psi = \frac{1}{3}\big(1 + \sqrt[3]{19+3\sqrt{33}} + \sqrt[3]{19-3\sqrt{33}}\big)$.
\end{example}

\begin{proof}[Proof of \Cref{a5}]

First, we verify that the given matrix $A$, parameterized by $k$, has the desired properties. By inspection, $A$ has at most $k+1$ non-zero entries in each row and column, and, because $A$ is lower triangular save for the last column, with entries below the diagonal bounded in magnitude by the diagonal entries, $A$ is partially pivoted. To observe the growth factor, it suffices to give the LU factorization explicitly. Let $L$ be unit lower triangular with $-1$ on the first $k$ subdiagonals and zeros elsewhere and let $U$ be upper triangular with $U_{ii} =1$ for $i<n$, $U_{in} = r_k^{i-1}$ for all $i$ and zeros elsewhere. Then because $U$ is the identity matrix in its first $n-1$ columns, and the first $n-1$ columns of $L$ agree with the first $n-1$ columns of $A$, it suffices to analyze the last column of $A$. We have
\[(LU)_{in} = r_k^{i-1} - \sum_{j = 1}^{\min\{k,i-1\}} r_k^{i-1-j}.\]
For $i \le k$, using the identity $1 = r_k^{-1} + \ldots + r_k^{-k}$,
\[(LU)_{in} = r_k^{i-1} - \sum_{j=1}^{i-1} r_k^{i-1-j} = r_k^{i-1} \left( 1 - \sum_{j=1}^{i-1} r_k^{-j} \right) = r_k^{i-1}  \sum_{j=i}^{k} r_k^{-j} = \sum_{\ell = 1}^{k+1-i} r_k^{-\ell} = s_i,\]
and, for $i>k$,
\[(LU)_{in} = r_k^{i-1} - \sum_{j = 1}^{k} r_k^{i-1-j} = r_k^{i-1-k}\left( r_k^k - r_k^{k-1} - r_{k}^{k-2} - \dots - 1\right)=0.\]
In addition, by definition, $1 = s_1 > s_2 > \ldots > s_k = r_k^{-1}>0$, so $\growth(A) = r_k^{n-1}$.
That the root $r_k$ is unique and indeed in the interval $(2-2^{1-k},2)$ follows by Rouche's theorem on $(x^k-x^{k-1}-\cdots-1)\cdot\frac{x-1}{x^k}$ (see, e.g., \cite[Lemma 3.4]{b24}).

Now consider a matrix $A \in \PP_n(\R)$ with at most two non-zero entries per row and column. We prove the following claim by induction: 
\begin{quote}The matrix $A\at k$ has at most two non-zero entries per row and column, every row and column of $A\at k$ with two non-zero entries has infinity norm at most $\maxnorm{A}$, and every row and column of $A\at k$ with one non-zero entry has infinity norm at most $2 \maxnorm{A}$. \end{quote} 
The matrix $A$ certainly satisfies this claim. Now suppose $A\at k$ also satisfies this claim for some $k<n-1$. If the pivot is the only non-zero entry in either the first row or column,  $A^{(k+1)}$ is simply $A\at k$ with the first row and column removed, and therefore also satisfies the claim. If $a\at k_{i1} \ne 0$ and $a\at k_{1j} \ne 0$ for some $i$ and $j$, then, $A^{(k+1)}$ is simply $A\at k$ with the first row and column removed and only the entry $a\at k_{ij}$ updated. The update $-a\at k_{i1}a\at k_{1j}/a\at k_{11}$ is at most $\maxnorm{A}$ in magnitude, as, by the claim, $\big|a\at k_{1j}\big| \le \maxnorm{A}$, and, by $A \in \PP_n(\R)$, $\big|a\at k_{i1}\big| \le \big| a\at k_{11} \big|$. If $a\at k_{ij} = 0$, then $\big|a\at{k+1}_{i-1,j-1}\big| \le \maxnorm{A}$, and there are still at most two non-zero entries in its row and column, as the column containing $a\at k_{i1}$ and the row containing $a\at k_{1j}$ have been removed. If $a\at k_{ij} \ne 0$, then \[\big|a\at{k+1}_{i-1,j-1}\big| \le \big| a\at k_{ij} \big| + \abs{\frac{ a\at k_{i1}a\at k_{1j}}{a\at k_{11}}}  \le 2\maxnorm{A}.\] However, because $a\at k_{ij} \ne 0$, there are no other non-zero entries in the row or column of $a^{(k+1)}_{i-1,j-1}$. Therefore, $A\up{(k+1)}$ also satisfies the desired claim, thus proving the claim for all $k$. 

From here, it follows immediately that $\growth(A) \le 2$, as $A \in \PP_n(\R)$ implies that $\|L\|_{\max} =1$ and our proven claim implies that $\|U\|_{\max} \le 2 \|A\|_{\max}$.
\end{proof}

\section{Existence of Low-Growth Row Exchanges}
Partial pivoting exchanges rows so that the resulting lower triangular matrix $L$ has $\maxnorm L =1$. Unfortunately, this can result in the entries of $U$ becoming as large as $2^{n-1}$. This tradeoff is lessened by using complete pivoting, for which $\|L\|_{\max} = 1$  and the entries of $U$ grow at worst quasi-polynomially. However, complete pivoting requires row and column exchanges, and still does not produce a polynomial growth factor. Here, we consider a fundamental existence question: does there always exist a row permutation that produces polynomial growth? By analyzing the randomized pivoting strategy $P \sim \VolPP(A)$, we answer this question affirmatively.

This question is closely related to rank-revealing factorizations and maximum volume pivoting in numerical linear algebra (see \cite{b25,b26,b27,b28}). A $k \times k$ submatrix locally maximizes volume if its squared determinant cannot be increased by swapping out a row or column. This is naturally related to Gaussian elimination, as the entries of $L$ and $U$ are ratios of determinants:
\eq{L_{ij} = \frac{\det A [[j-1]+\set i,[j]]}{\det A[[j],[j]]} \quad \text{for} \quad i \ge j \qquad\text{and} \qquad U_{ij} =  \frac{\det A[[i],[i-1]\cup\set{j}]}{\det A[[i-1],[i-1]]} \quad \text{for} \quad i \le j.}

But there is a natural obstruction to applying these results directly. For any fixed $k$, one can find a large volume set of $k$ rows in the first $k$ columns of $A$, but these choices need not be nested as $k$ varies; a row pivoting strategy for Gaussian elimination requires a single chain of choices $S_1 \subset S_2 \subset \ldots \subset S_n$, where $|S_k| = k$. Using $\VolPP$ pivoting, we build a random chain with a determinantal distribution based on $\det(A[S_k,[k]])^2$, commonly referred to as volume sampling or a fixed size determinantal point process (DPP) \cite{b29}. Volume sampling has previously appeared in a variety of applied linear algebraic contexts, see, for instance, \cite{b30,b31,b32,b33,b34}.

The only structural result about DPPs that we require is the stochastic
monotonicity of projection processes.  If $\mathcal H_1\subseteq
\mathcal H_2$ are subspaces of $\ell^2(E)$ for some countable set $E$, and $\mathbb P^{\mathcal H_i}$
denotes the projection DPP associated with the orthogonal projection
onto $\mathcal H_i$, then
\(\mathbb P^{\mathcal H_1}\) is stochastically dominated by \(\mathbb P^{\mathcal H_2},\)
see \cite[Theorem 6.2]{b35}.  By Strassen's coupling characterization of stochastic domination \cite{b36}, the two
processes therefore admit a coupling $(S_1,S_2)$ such that
$S_1\subseteq S_2$ almost surely.  Applying this result successively to
the nested column spaces
\[
    \Col(A[:,[1]])
    \subseteq \Col(A[:,[2]])
    \subseteq\cdots\subseteq
    \Col(A)
\]
produces the required random nested chain, while preserving the
volume-sampling marginal at every level. 

\begin{theorem}\label{a6}
For every $A\in\GL_n(\R)$, there is a probability measure on permutations $\VolPP(A)$ with marginals \Cref{a1}, where if $P\sim\VolPP(A)$ then $PA=LU$ satisfies
\[\E\norm{L}_F^2\le\frac{n^3+5n}6\qquad\text{and} \qquad \E\norm U_F^2\le \frac{n^3+3n^2+2n}6\magn{A}_{1\to2}^2.\]
Since $\maxnorm{\cdot}\le\norm\cdot_F$ and $\norm A_{1\to2}\le\sqrt n\maxnorm A$, this additionally implies that $\E\growth(PA)^2\le\frac{n^4}6+O(n^3)$.
\end{theorem}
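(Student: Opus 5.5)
The plan has three parts: build the random nested chain, compute the expected squared entries of $L$ and $U$ column by column, and combine the bounds.

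\emph{Existence of the chain.} Let $V_k=\Col(A[:,[k]])\subseteq\R^n$ and let $\Pi_k$ be the orthogonal projection onto $V_k$. The projection DPP with kernel $\Pi_k$ is a random $k$-subset $S$. By Cauchy--Binet, $\Pr(S)=\det(\Pi_k[S,S])=\det(A[S,[k]])^2/\det(A[:,[k]]^\top A[:,[k]])$, which is exactly the marginal \Cref{a1}. Since $V_1\subseteq\cdots\subseteq V_n$, the cited stochastic monotonicity and Strassen's theorem give couplings of consecutive levels with $S_k\subseteq S_{k+1}$. I would glue these couplings into a Markov chain: sample $S_{k+1}$ given $S_k$ from the conditional law of the $(k,k{+}1)$ coupling. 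This yields a chain $S_1\subset\cdots\subset S_n=[n]$ with the correct marginals. Setting $\pi_k$ to be the unique element of $S_k\setminus S_{k-1}$ defines $P$. Every $\det A[S_k,[k]]\neq0$ almost surely, so $PA$ has an LU factorization with probability one.

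\emph{Bounding $L$.} Column $j$ of $L$ is $L_{ij}=\det A[S_{j-1}+\set{\pi_i},[j]]/\det A[S_j,[j]]$. These entries depend only on the set $S_j$ and the rows outside it; they do not depend on the internal ordering. Hence $\sum_i L_{ij}^2=1+\sum_{r\notin S_j}\det A[S_j-\set{\pi_j}+\set r,[j]]^2/\det A[S_j,[j]]^2$. This still depends on $\pi_j$, the last element added. I would bound it by the sum over all $s\in S_j$, that is, by $\sum_{s\in S_j}\sum_{r\notin S_j}\det A[S_j-s+r,[j]]^2/\det A[S_j,[j]]^2$. Taking the expectation under the volume-sampling marginal, the weight $\det A[S_j,[j]]^2$ cancels each denominator. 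We get $Z_j^{-1}\sum_{S}\sum_{s\in S,r\notin S}\det A[S-s+r,[j]]^2$, and each $j$-set $T$ arises from exactly $j(n-j)$ pairs. So the contribution of column $j$ is at most $1+j(n-j)$. Summing over $j$ gives $n+\sum_j j(n-j)=n+\frac{n^3-n}{6}=\frac{n^3+5n}{6}$, which matches the claim.

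\emph{Bounding $U$.} Write $U_{ij}=\det A[S_i,[i-1]\cup\set j]/\det A[S_{i-1},[i-1]]$, which depends only on $S_{i-1}$ and $\pi_i$. Here I need the conditional law of $\pi_i$ given $S_{i-1}$, and the coupling does not pin it down. I would therefore bound crudely via the marginals: $U_{ij}^2\le\sum_{r\notin S_{i-1}}\det A[S_{i-1}+r,[i-1]\cup\set j]^2/\det A[S_{i-1},[i-1]]^2$. By Cauchy--Binet this sum equals the squared norm of the component of column $a_j$ orthogonal to $A[:,[i-1]]$ after restricting to rows $S_{i-1}^c$ in the Schur-complement sense. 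Concretely, it is $\|a_j|_{S^c}-A[S^c,[i-1]]A[S,[i-1]]^{-1}a_j|_S\|^2$. Taking the expectation over volume sampling at level $i-1$ and cancelling weights again, I expect a bound of the form $(\text{count})\cdot\|a_j\|_2^2$. The combinatorial count should be $i$ or $i(n-i+1)/(\ldots)$, and I would tune it so that $\sum_{i\le j}$ totals $\frac{n^3+3n^2+2n}{6}=\binom{n+2}{3}$. That suggests the contribution of $(i,j)$ is at most $i\|A\|_{1\to2}^2$, since $\sum_{i\le j\le n} i=\binom{n+2}{3}$. To prove the per-entry bound $\E U_{ij}^2\le i\|a_j\|^2$, I would expand $\det A[S+r,[i-1]\cup j]$ along column $j$ (Laplace expansion). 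The result is a sum over $t\in S+r$ of $\pm a_{tj}\det A[S+r-t,[i-1]]$. Cauchy--Schwarz with $i$ terms then gives $i\sum_t a_{tj}^2\det A[S+r-t,[i-1]]^2$. Summing over $S,r$ and cancelling reduces this to $i\|a_j\|^2$ times a normalized sum of squared $(i-1)$-minors, which equals $1$.

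\emph{Main obstacle.} The main obstacle is this last step: the Laplace/Cauchy--Schwarz estimate must cancel correctly against the partition function $Z_{i-1}$. The growth bound then follows from $\maxnorm L^2\le\norm L_F^2$, $\maxnorm U^2\le\norm U_F^2$, and $\norm A_{1\to2}^2\le n\maxnorm A^2$, which gives $\E\growth^2\le\frac{n^4}{6}+O(n^3)$.
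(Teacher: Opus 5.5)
Your construction of the chain (including the explicit Markov gluing of the pairwise Strassen couplings) and your bound on $L$ match the paper's argument. The gap is in the $U$ bound, at the step you flag as the main obstacle, and it is a real loss, not a bookkeeping issue. Put $C=[i-1]$, $a_j=A\bm e_j$ and $Z_{i-1}=\sum_{|S|=i-1}\det A[S,C]^2$. Expanding $\det A[S+\set r,C\cup\set j]$ along column $j$ and applying Cauchy--Schwarz to its $i$ terms already costs a factor $i$. What remains is \emph{not} a normalized sum of squared $(i-1)$-minors equal to $1$. Re-index the triples $(S,r,t)$ by $S'=S+\set r-\set t$ and $t\notin S'$. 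Each pair $(S',t)$ then arises from $i$ triples, one for each $r\in S'\cup\set t$. So for generic $A$ (all $\det A[S,C]\neq0$) your chain ends at
\[\frac{i^2}{Z_{i-1}}\sum_{|S'|=i-1}\det A[S',C]^2\sum_{t\notin S'}a_{tj}^2=i^2\sum_{t=1}^n a_{tj}^2\bigl(1-(\Pi_{i-1})_{tt}\bigr).\]
Taking $a_j=\bm e_1$ with small leverage $(\Pi_{i-1})_{11}$ makes this close to $i^2\norm{a_j}^2$, so no finer accounting of your chain recovers the lost factor $i$. Summing $i^2$ over $i\le j$ gives only $\E\norm U_F^2\le\frac{n(n+1)^2(n+2)}{12}\magn A_{1\to2}^2$, hence only $\E\growth(PA)^2=O(n^5)$. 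Neither the stated constant nor the stated order follows. Your reverse-engineered target $\E U_{ij}^2\le i\norm{a_j}^2$ is the right one; it is the proposed derivation of it that fails.

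The missing idea is to keep the full $i\times i$ minors and use Cauchy--Binet instead of Laplace plus Cauchy--Schwarz. After the weights cancel, the numerator satisfies $\sum_S\sum_{r\notin S}\det A[S+\set r,C\cup\set j]^2\le i\sum_{|T|=i}\det A[T,C\cup\set j]^2$. Here the factor $i$ is only the multiplicity of $T=S+\set r$. With $B=A^\top A$, Cauchy--Binet turns this into $i\det B[C\cup\set j,C\cup\set j]$, and turns $Z_{i-1}$ into $\det B[C,C]$. Fischer's inequality then bounds the ratio by $B_{jj}=\norm{a_j}^2$; in fact the Schur complement identity shows the ratio is exactly $\norm{(I-\Pi_{i-1})a_j}^2$. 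This gives $\E U_{ij}^2\le i\norm{a_j}^2$, i.e.\ the row bound of \Cref{a15}, which sums to $\binom{n+2}3\magn A_{1\to2}^2$. Relatedly, the identity you attributed to Cauchy--Binet is Schur's formula. For fixed $S$, the vector $a_j|_{S^c}-A[S^c,C]A[S,C]^{-1}a_j|_S$ is an oblique residual. The orthogonal residual $(I-\Pi_{i-1})a_j$ appears only after averaging over volume sampling.
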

\begin{proof}
The existence of the distribution $\VolPP$ with the desired marginals \Cref{a1} is given by \Cref{a3}. The expected squared Euclidean norms for each column of $L$ and row of $U$ are bounded by \Cref{a15}, so we simply sum these expressions,
\spliteq{}{
\E\magn L_F^2&=\sum_{k=1}^n\E\magn{L[:,k]}^2\le n+\sum_{k=1}^nk(n-k)=\frac{n^3+5n}6,\\
\E\magn U_F^2&=\sum_{k=1}^{n} \E\magn{U[k,:]}^2\le\magn{A}^2_{1\to2}\sum_{k=1}^n k(n-k+1)=\frac{n^3+3n^2+2n}6 \magn{A}^2_{1\to2}.}
\end{proof}

\begin{lemma}
\label{a3}
Given $A\in\GL_n(\R)$, let $\Pi_k$ be the orthogonal projection onto the span of the leading $k$ columns of $A$.
Then there is a distribution, which we denote $\VolPP$, on permutations $(\pi_1,\ldots,\pi_n)$ of $[n]$
such that
\eq{\label{a16}\Pr( \set{\pi_1,\ldots,\pi_k}=S )=\det\Pi_k[S,S].}
for all $k\in[n]$. Furthermore, these distributions are equivalently formulated as
\[\Pr( \set{\pi_1,\ldots,\pi_k}=S )\propto(\det A[S,[k]])^2.\]
\end{lemma}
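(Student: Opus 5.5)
We would first establish the equivalence of the two formulations, then construct the chain by repeated monotone coupling, and finally check that the resulting chain of sets comes from a single random permutation.

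\emph{Step 1 (the two formulations agree).} Fix $k$. Write $A[:,[k]]=Q_kR_k$ with $Q_k\in\R^{n\times k}$ having orthonormal columns and $R_k\in\GL_k(\R)$ upper triangular. The latter is invertible because $A$ is. Then $\Pi_k=Q_kQ_k^\top$, so $\Pi_k[S,S]=Q_k[S,:]Q_k[S,:]^\top$. For $|S|=k$ this gives
\[\det\Pi_k[S,S]=(\det Q_k[S,:])^2=(\det A[S,[k]])^2/(\det R_k)^2.\]
The weights are therefore proportional to $(\det A[S,[k]])^2$. By Cauchy--Binet, $\sum_{|S|=k}(\det Q_k[S,:])^2=\det(Q_k^\top Q_k)=1$, so the weights already sum to one. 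For $|S|\neq k$ we read \eqref{a16} as the statement that the $k$-point projection DPP associated with $\Pi_k$ puts all its mass on $k$-sets with exactly these probabilities. This is the standard fact that a rank-$k$ projection DPP has exactly $k$ points and $\Pr(X=S)=\det\Pi_k[S,S]$ for $|S|=k$.

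\emph{Step 2 (monotone coupling of consecutive levels).} Let $\mathbb P_k$ denote the projection DPP on $E=[n]$ for the subspace $\mathcal H_k=\Col(A[:,[k]])$. Since $\mathcal H_k\subseteq\mathcal H_{k+1}$, the cited result \cite[Theorem 6.2]{b35} shows that $\mathbb P_k$ is stochastically dominated by $\mathbb P_{k+1}$. Strassen's theorem \cite{b36} then gives a coupling $\mu_k$ of $(X_k,X_{k+1})$ with $X_k\subseteq X_{k+1}$ almost surely. We glue these couplings into a Markov chain. Sample $X_1\sim\mathbb P_1$, and for each $k$ sample $X_{k+1}$ from the conditional law $\mu_k(\,\cdot\mid X_k)$. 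The state space is finite, so the conditional laws are well defined on the support. By induction on $k$, the $k$-th marginal of the chain is $\mathbb P_k$, and $X_1\subseteq X_2\subseteq\cdots\subseteq X_n=[n]$ almost surely.

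\emph{Step 3 (from sets to a permutation).} Each $X_k$ has exactly $k$ elements and the sets are nested. Hence $X_{k}\setminus X_{k-1}$ is a single element, with $X_0=\emptyset$, and we call it $\pi_k$. This defines a random permutation with $\{\pi_1,\ldots,\pi_k\}=X_k$, so \eqref{a16} holds for every $k$.

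\emph{Main obstacle.} The delicate point is the stochastic domination between projection DPPs for nested subspaces. Everything else is bookkeeping and the Markov gluing of couplings, and we take the domination as given from the cited result. If we had to avoid the citation, our first attempt would be a direct construction. Given $X_k=S$, we would choose $i\notin S$ with probability $\propto(\det A[S+\{i\},[k+1]])^2/(\det A[S,[k]])^2$, which by a Schur complement equals the squared residual of row $i$ after projecting out $S$. We would then try to verify the marginal identity from the Cauchy--Binet expansion, though that verification is not obviously true and we would expect it to require the domination argument anyway.
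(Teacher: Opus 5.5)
Your proposal is correct and follows the paper's proof essentially step for step: the same determinant identity plus Cauchy--Binet (you use a thin QR factorization where the paper uses $\Pi_k=A[:,[k]](A[:,[k]]^\top A[:,[k]])^{-1}A[:,[k]]^\top$, an immaterial difference), then \cite[Theorem 6.2]{b35} and Strassen's theorem, with your explicit Markov gluing of consecutive couplings making precise what the paper's ``applying this theorem pairwise'' leaves implicit. One caution about your closing aside: the sequential rule sketched there is exactly $\RandPP_2$, and its level-$k$ marginals are genuinely not volume sampling (for a matrix with leading columns $(1,1,1)^\top$ and $(0,1,3)^\top$ it gives $\Pr(\set{\pi_1,\pi_2}=\set{1,2})=\frac1{10}$ instead of $\frac1{14}$), so that fallback would fail outright rather than merely be hard to verify.
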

\begin{proof}
The equivalence of the formulas comes from the orthogonal projection formula, \[\Pi_k=A[:, [k]](A[:, [k]]^\top A[:, [k]])^{-1}A[:, [k]]^\top.\] Looking at the $[S,S]$ submatrix of both sides and taking determinants gives
\[
\det\pare{\Pi_k[S, S]}
=\frac
{( \det A[S, [k]] )^2}
{\det(A[:, [k]]^\top A[:, [k]])},\]
which indeed defines a probability distribution by Cauchy-Binet.
For each $k$, let $S_k$ be the random element of $\binom{[n]}k$ such that
\[\Pr(S_k=S)=\det\Pi_k[S,S].\]
By \cite[Theorem 6.2]{b35}, $S_{k+1}$ stochastically dominates $S_k$, and, by Strassen's coupling theorem \cite{b36}, this is equivalent to the existence of a coupling such that $S_k\subset S_{k+1}$. By applying this theorem pairwise, we have an entire nested sequence
\[\emptyset=S_0\subset S_1\subset S_2\subset\cdots\subset S_n=[n],\qquad \abs{S_k}=k.\]
Set $\pi_i$ to be the unique element in $S_i\backslash S_{i-1}$ to obtain the desired distribution $\VolPP(A)$ on permutations.

\end{proof}
\begin{lemma}\label{a15}
Let $A \in \GL_n(\R)$ and $P$ be a permutation sampled according to $\VolPP(A)$ of \Cref{a3}. Then $PA$ admits an LU factorization, $PA=LU$, where $L$ is unit lower triangular and $U$ is upper triangular, and
\[\E\norm{L[:,k]}^2\le1+k(n-k) \qquad \text{and} \qquad \E\magn{U[k,:]}^2\le k\sum_{i=k}^n\norm{Ae_i}^2\]
for all $k\in[n]$.
\end{lemma}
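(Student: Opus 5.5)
The plan is to write every entry of $L$ and $U$ as a ratio of minors, via the determinant formulas for $L_{ij}$ and $U_{ij}$ above applied to $PA$, and then bound each expectation using only the marginal of a single $S_k$ from \Cref{a3}. The joint law of the chain, i.e.\ the coupling, never enters except through the fact that $S_{k-1}\subset S_k$.

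First, the LU factorization exists almost surely. Write $S_k=\set{\pi_1,\ldots,\pi_k}$. By \Cref{a3}, $\Pr(S_k=S)\propto(\det A[S,[k]])^2$, so almost surely $\det A[S_k,[k]]\ne0$ for every $k$. These are exactly the leading principal minors of $PA$, so $PA=LU$ exists.

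Next I make the entries explicit. Let $G_k=\det(A[:,[k]]^\top A[:,[k]])$ and $p_k(S)=(\det A[S,[k]])^2/G_k$.
\begin{itemize}
\item For $L$: $L_{kk}=1$, and for $i>k$ the entry $L_{ik}$ corresponds to a row $j=\pi_i\notin S_k$, with
\[L_{ik}=\frac{\det A[S_{k-1}+\set j,[k]]}{\det A[S_k,[k]]}.\]
Since $S_{k-1}+\set j=S_k-\set{\pi_k}+\set j$ is one particular single swap of $S_k$, I enlarge the sum to all swaps:
\[\norm{L[:,k]}^2\le 1+\sum_{i\in S_k}\sum_{j\notin S_k}\frac{p_k(S_k-i+j)}{p_k(S_k)}.\]
\item For $U$: with $R=S_{k-1}$, for $j\ge k$,
\[U_{kj}=\frac{\det A[R+\set{\pi_k},[k-1]\cup\set j]}{\det A[R,[k-1]]}.\]
Since $\pi_k\notin R$, I bound the numerator squared by $\sum_{i\notin R}(\det A[R+\set i,[k-1]\cup\set j])^2$.
\end{itemize}

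For $L$, take the expectation using the marginal $p_k$. The factor $p_k(S_k)$ cancels, leaving
\[\sum_{S}\sum_{i\in S,\,j\notin S}p_k(S-i+j).\]
Reindex by $T=S-i+j$: each $k$-set $T$ arises from exactly $k(n-k)$ pairs $(S,(i,j))$. Since $\sum_T p_k(T)=1$, the sum equals $k(n-k)$, which gives $\E\norm{L[:,k]}^2\le1+k(n-k)$.

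For $U$, take the expectation over $R\sim p_{k-1}$. The denominator cancels against $p_{k-1}(R)G_{k-1}$, giving
\[\E U_{kj}^2\le\frac1{G_{k-1}}\sum_{R}\sum_{i\notin R}(\det A[R+\set i,[k-1]\cup\set j])^2 .\]
Each $k$-set appears exactly $k$ times in the double sum. By Cauchy--Binet, the double sum is therefore $k\det(B^\top B)$, where $B=A[:,[k-1]\cup\set j]$. A Gram/Schur complement computation gives
\[\det(B^\top B)=G_{k-1}\,\mathrm{dist}\big(Ae_j,\Col(A[:,[k-1]])\big)^2\le G_{k-1}\norm{Ae_j}^2.\]
Hence $\E U_{kj}^2\le k\norm{Ae_j}^2$, and summing over $j\ge k$ gives the stated bound.

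I expect the main obstacle to be the $L$ bound, because $L[:,k]$ involves both $S_{k-1}$ and $S_k$, whose joint law is only an abstract Strassen coupling. The key step is to discard the dependence on $S_{k-1}$ by noting that $S_{k-1}\cup\set j$ is a single-element swap of $S_k$, and then overcount over all swaps. That reduces the bound to the $S_k$-marginal alone. For $U$, the same trick is done in the other direction, from $S_k$ back to $S_{k-1}$, by summing over all possible added rows.
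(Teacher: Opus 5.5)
Your proposal is correct and is essentially the paper's proof. Both write the entries of $L[:,k]$ and $U[k,:]$ as ratios of minors, overcount by summing over all single swaps of $S_k$ (for $L$) or all one-row extensions of $S_{k-1}$ (for $U$), cancel the denominator against the volume-sampling marginal, count multiplicities $k(n-k)$ and $k$, and finish with Cauchy--Binet and the bound $\det(B^\top B)\le G_{k-1}\norm{Ae_j}^2$, which is the same Hadamard--Fischer step the paper uses. The differences are cosmetic: you index $U$ by row $k$ with $R=S_{k-1}$ directly rather than shifting $k\mapsto k-1$ at the end, and the cancelled sum runs only over the support of $p_k$, so it is at most (not exactly) $k(n-k)$, which is all you need.
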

\begin{proof}
Set $C=[k]$. 
In order for an LU factorization to exist, one needs each leading pivot block to be invertible. That is, if $S$ is the set of the first $k$ pivots, then we need $\det A[S,C]\neq 0$. From the definition of $\VolPP(A)$, we see that if $\det A[S,C]=0$, $S$ will be the set of first $k$ pivots with probability 0; i.e. $\det A[S,C]\neq0$ for the $S$ sampled by $\VolPP(A)$ occurs with probability 1.

Let $(\pi_1,\ldots,\pi_n)$ be the permutation from \Cref{a3} and set $S=\set{\pi_1,\ldots,\pi_k}$, $S'=\set{\pi_1,\ldots,\pi_{k+1}}$. By Cramer's rule,
\[
\abs{L_{ik}}=
\abs{\frac
{\det A[S-\set{\pi_k}+\set{\pi_i},C]}
{\det A[S,C]}},\qquad
\abs{U_{k+1,j}}=
\abs{\frac
{\det A[S',C+\set j]}
{\det A[S,C]}}
.\]
We first handle $L$. We have
\spliteq{}{
\norm{ L[:,k] }^2
  =1+\sum_{i=k+1}^n\abs{L_{ik}}^2
  &=1+\sum_{i=k+1}^n\abs{\frac{ \det A[S-\set{\pi_k}+\set{\pi_i},C] }{ \det A[S,C] }}^2
\\&\le 1+\sum_{\pi\in S}\sum_{\pi'\not\in S}\abs{\frac{ \det A[S-\set{\pi}+\set{\pi'},C] }{ \det A[S,C] }}^2.}
Taking expectations with $\Pr(S)\propto(\det A[S,C])^2$, the denominator cancels, giving
\[
\E\norm{ L[:,k] }^2
\le 1+\frac{ \sum_{S}  \sum_{\pi \in S}\sum_{\pi'\not\in S}(\det A[S-\set{\pi}+\set{\pi'},C])^2 }{ \sum_{S}(\det A[S,C])^2 },
\]
where the outer sums are over $S\in\binom{[n]}k$. Note that $S-\set \pi +\set{\pi'}$ is just another element of $\binom{[n]}k$, and it is counted exactly $k(n-k)$ times, which gives $\E \norm{ L[:,k] }^2\le1+k(n-k)$. We handle $U$ in a similar fashion:
\spliteq{}{
\magn{U[k+1,:]}^2
=   \sum_{j=k+1}^n\abs{U_{k+1,j}}^2
&=\sum_{j=k+1}^n\abs{\frac{\det A[S',C+\set j]}{\det A[S,C]}}^2
\\&\le
\sum_{j = k+1}^n \sum_{\pi'\not\in S}\abs{\frac
{\det A[S+\set{\pi'},C+\set j]}
{\det A[S,C]}}^2
}
Once again, taking expectations with $\Pr(S)\propto(\det A[S,C])^2$ causes the denominator to cancel giving us
\spliteq{}{
\E \magn{U[k+1,:]}^2
 &\le\sum_{j = k+1}^n\frac{\sum_S\sum_{\pi'\not\in S}\abs
{\det A[S+\set{\pi'},C+\set j]}^2}{\sum_S\abs{\det A[S,C]}^2},
}
where the outer sum is over $S\in\binom{[n]}k$. Note that $S+\set{\pi'}$ is an element of $\binom{[n]}{k+1}$, and it is counted exactly $k+1$ times in the sum. Applying Cauchy-Binet to the numerator and denominator gives, for $B=A^\top A$,
\[\E \magn{U[k+1,:]}^2\le(k+1)\sum_{j = k+1}^n \frac{\det B[C+\set j,C+\set j]}{\det B[C,C]}\le(k+1)\sum_{j=k+1}^n B_{j,j},\]
where the final step used the bound $\det B[C+\set j,C+\set j]\le\det(B[C,C])\cdot B_{j,j}$, a consequence of Hadamard's maximum determinant inequality.
Replacing $k$ with $k-1$ gives the stated result.
\end{proof}

Here, we provide a lower bound as well, giving some indication as to how close \cref{a6} is to being tight. Whenever an $n\times n$ Hadamard matrix exists, it gives a lower bound of $n$ for the growth factor under any row or column pivoting strategy. Indeed, for any Hadamard matrix $H$ (note any row or column permutation of a Hadamard matrix is also Hadamard) with an LU decomposition $H = LU$, we have $|U_{n,n}| = n$ \cite{b12}. In order to produce lower bounds for $\|L\|_F$ and $\|U\|_F$ under row permutations, we utilize Sylvester Hadamard matrices.

\begin{theorem}\label{a7}
Let $k \in \mathbb{N}$, $H_k = \begin{pmatrix} 1 & 1 \\ 1 & -1 \end{pmatrix}^{\otimes k}$ be the $n = 2^k$ dimensional Sylvester Hadamard matrix, and $P$ be any permutation matrix for which $PH_k$ has an LU factorization $PH_k = LU$. Then
\[\|L\|_F^2 = 3^k = n^{\log_2 3} \qquad \text{and} \qquad \|U\|_F^2 = 6^k = n^{\log_2 6}.\]
\end{theorem}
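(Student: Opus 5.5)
The plan is induction on $k$, using the recursive block structure $H_k=\begin{pmatrix}H_{k-1}&H_{k-1}\\ H_{k-1}&-H_{k-1}\end{pmatrix}$ together with the shift-invariance of Gaussian elimination from \Cref{a0}. Index rows and columns of $H_k$ by $x,y\in\{0,1\}^k$, so that $(H_k)_{xy}=(-1)^{\langle x,y\rangle}$ with columns in binary order. The goal is the recursions
\[
\|L_k\|_F^2=3\|L_{k-1}\|_F^2,\qquad \|U_k\|_F^2=6\|U_{k-1}\|_F^2 .
\]
They hold for every admissible permutation, and the base case $k=0$ (a $1\times1$ matrix $(1)$) gives $3^0=6^0=1$, so the two formulas follow.

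\textbf{Structure of the first half.} The leading $m=2^{k-1}$ columns have $y_{\rm top}=0$, so each such entry depends only on the lower $k-1$ bits of $x$. Hence rows come in identical pairs $\{x,\bar x\}$, where $\bar x$ flips the top bit. For $PH_k$ to have an LU factorization, the leading $m\times m$ block of the first $m$ columns must be invertible. So the first $m$ pivots $S$ contain exactly one row from each pair, and $W=PH_k[S,[m]]$ is a row permutation of $H_{k-1}$. Write its factorization as $W=L'U'$; this is an LU factorization of some $P'H_{k-1}$, so the induction hypothesis applies to it. Partition $L_k=\begin{pmatrix}L_{11}&0\\ L_{21}&L_{22}\end{pmatrix}$ and $U_k=\begin{pmatrix}U_{11}&U_{12}\\0&U_{22}\end{pmatrix}$.
\begin{itemize}
\item $L_{11}=L'$ and $U_{11}=U'$.
\item $L_{21}=YW^{-1}$, and each row of $Y$ duplicates a row of $W$. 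So $L_{21}$ is a row permutation of the identity, contributing $m$ to the squared norm.
\end{itemize}
Then I have to recount the identity: the left half of $L_k$ has squared norm $\|L'\|^2+m$. I would check the bookkeeping against $k=1$, where $L=\begin{pmatrix}1&0\\1&1\end{pmatrix}$ has $\|L\|_F^2=3$. This means $L_{21}$ contributes $m=2^{k-1}$ rather than $\|L'\|^2$, so the recursion for $L$ must come out as $\|L_k\|^2=\|L'\|^2+2^{k-1}+\|L_{22}\|^2$.

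\textbf{Schur complement.} If the chosen row is $x$ and its partner is $\bar x$, then the full rows are $(h,\epsilon h)$ and $(h,-\epsilon h)$ with $\epsilon=(-1)^{x_{\rm top}}$. Therefore $X=D_\epsilon W$ and $Z=-D_\epsilon' Y$, with sign diagonals $D_\epsilon$ and $D'_\epsilon$ matched to the chosen and partner rows respectively. The Schur complement is
\[
A^{(m)}=Z-YW^{-1}X=-2\,D\,\Pi H_{k-1}
\]
for a sign diagonal $D$ and a row permutation $\Pi$. Since $A^{(m)}$ must itself have an LU factorization, the hypothesis gives
\[
L_{22}=D\tilde L D,\qquad U_{22}=-2D\tilde U ,
\]
where $\tilde L,\tilde U$ are the factors of $\Pi H_{k-1}$. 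So $\|L_{22}\|^2=3^{k-1}$ and $\|U_{22}\|^2=4\cdot 6^{k-1}$.

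\textbf{Recursions and the main obstacle.} With these blocks, the $U$ recursion becomes $\|U_k\|^2=6^{k-1}+\|U_{12}\|^2+4\cdot6^{k-1}$. It therefore suffices to show $\|U_{12}\|_F^2=\|U_{11}\|_F^2$, where
\[
U_{12}=L'^{-1}D_\epsilon L'U'.
\]
This step is the main obstacle. The sign pattern $\epsilon_x=(-1)^{x_{\rm top}}$ is arbitrary, so it does not obviously commute with $L'$.

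My first attempt is the minor formula $U_{ij}=\det A[[i],[i-1]\cup\{j\}]/\det A[[i-1],[i-1]]$ for $j=y+2^{k-1}$. Column $j$ of $X$ is column $y$ of $W$ with the rows multiplied by $\epsilon$. The squared norm of row $i$ of $U_{12}$ is then a sum over $y$ of squared minors. The idea is to use that the rows of $H_{k-1}$ are orthogonal characters, so that
\[
\sum_y\bigl(\det[\,W_{[i],[i-1]}\mid D_\epsilon W_{[i],y}\,]\bigr)^2=\det\bigl(M^\top G M\bigr)
\]
for a suitable $M$ and a Gram matrix $G$. Since $H_{k-1}H_{k-1}^\top=2^{k-1}I$ is invariant under the sign flip $D_\epsilon$, this should give a quantity independent of $\epsilon$, which equals the $\epsilon\equiv1$ value $\|U_{11}[i,:]\|^2$. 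If that fails, a fallback is to verify row-norm invariance directly via Cauchy–Binet over the column index $y$, as in the proof of \Cref{a15}.

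Finally, I would recheck the $L$ recursion the same way, aiming for $\|L_k\|^2=3\|L_{k-1}\|^2$. The count from the blocks above, $3^{k-1}+2^{k-1}+3^{k-1}$, does not equal $3^k$ in general. So I expect to revisit whether $L_{21}$ has more nonzeros than a permutation, for example via the sign structure of $L'$, and to fix the bookkeeping there; the $k=1,2$ cases give a direct check.
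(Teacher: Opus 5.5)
\textbf{Gap in the $L$ recursion.} Your block setup is correct and close to the paper's: one row from each identical pair among the first $2^{k-1}$ pivots, $X=D_\epsilon W$, the Schur complement $-2D\Pi H_{k-1}$, and $L_{22}=D\tilde L D$, $U_{22}=-2D\tilde U$. The problem is a wrong formula for $L_{21}$, and it breaks the $L$ recursion.

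From $Y=L_{21}U_{11}$ you get $L_{21}=YU_{11}^{-1}=YW^{-1}L_{11}$, not $YW^{-1}$. The matrix $YW^{-1}$ is the multiplier of \emph{block} elimination, $\begin{pmatrix}W&X\\Y&Z\end{pmatrix}=\begin{pmatrix}I&0\\YW^{-1}&I\end{pmatrix}\begin{pmatrix}W&X\\0&Z-YW^{-1}X\end{pmatrix}$. It is not a block of the unit lower triangular factor.

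Since $Y=\Pi W$ for a permutation $\Pi$, you actually get $L_{21}=\Pi L'$. Hence $\|L_{21}\|_F^2=\|L'\|_F^2=3^{k-1}$ and $\|L_k\|_F^2=3^{k-1}+3^{k-1}+3^{k-1}=3^k$.

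Your $k=1$ check could not catch this, because there $m=\|L'\|_F^2=1$. Already for $k=2$ with $P=I$ you have $L_{21}=\begin{pmatrix}1&0\\1&1\end{pmatrix}$, which is not a permutation. As written, your proposal ends with the recursion $3^{k-1}+2^{k-1}+3^{k-1}$, which you yourself note is false, so the $L$ half is not established.

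\textbf{The $U_{12}$ step.} The step you flag as the main obstacle is immediate, and your Gram-matrix instinct is the right one. With $U_{12}=(L')^{-1}D_\epsilon W$ and $WW^\top=2^{k-1}I$,
\[U_{12}U_{12}^\top=(L')^{-1}D_\epsilon WW^\top D_\epsilon (L')^{-\top}=2^{k-1}(L')^{-1}(L')^{-\top}=U'U'^\top,\]
so $\|U_{12}\|_F^2=\|U'\|_F^2=6^{k-1}$ with no minor computations.

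With these two fixes, your argument is a valid direct induction on $(\|L\|_F,\|U\|_F)$ over plain row permutations. The paper instead inducts on $\|L\|_F^2=\|L^{-1}\|_F^2=3^k$ over signed row permutations $DPH_k$. It then gets $U$ in one stroke from $UU^\top=2^kL^{-1}L^{-\top}$. This uses the same orthogonality fact once, globally, rather than on the block $U_{12}$, and so avoids tracking $U$ through the recursion at all.
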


\begin{proof}
We have
\[\begin{pmatrix} 1 & 1 \\ 1 & -1 \end{pmatrix} = \begin{pmatrix} 1 & 0 \\ 1 & 1 \end{pmatrix} \begin{pmatrix} 1 & 1 \\ 0 & -2 \end{pmatrix} \]
and so, by the Kronecker mixed product property,
\[H_k = \begin{pmatrix} 1 & 1 \\ 1 & -1 \end{pmatrix}^{\otimes k} = \left[ \begin{pmatrix} 1 & 0 \\ 1 & 1 \end{pmatrix} \begin{pmatrix} 1 & 1 \\ 0 & -2 \end{pmatrix} \right]^{\otimes k} =  \begin{pmatrix} 1 & 0 \\ 1 & 1 \end{pmatrix}^{\otimes k} \begin{pmatrix} 1 & 1 \\ 0 & -2 \end{pmatrix}^{\otimes k} = LU. \]
This completes the proof for $H_k$. 

What remains is to show that $PH_k$, if it has an LU factorization, has exactly the same Frobenius norms for $L$ and $U$. We proceed by induction on $k$ for the larger class of signed and row-permuted Sylvester matrices $A = D P H_k$, where $D$ is a $\pm 1$ diagonal matrix. We induct on the identities $\|L\|_F^2 = \|L^{-1}\|_F^2 = 3^k$. This immediately implies our desired result, as $A A^\top = 2^k I$, so $U U^\top = 2^k L^{-1} L^{-T}$ and $\|U\|_F^2 = 2^k \|L^{-1}\|_F^2$.

The base case of $k = 0$ is immediate. Now, let $n = 2^{k-1}$. The rows of $H_k$ consist of the $n$ pairs $(\bm e_i^\top H_{k-1},\bm e_i^\top H_{k-1})$ and $(\bm e_i^\top H_{k-1},-\bm e_i^\top H_{k-1})$ for $i\in [n]$. The first $n$ rows of $A$ consist of precisely one row from each of these pairs and possibly re-signs them, and the last $n$ rows consist of the other row from each pair, otherwise the leading $n\times n$ principal minor of $A$ would be zero. Therefore $A$ takes the form
\[A = \bmat{ \wt H & \wt D \wt H \\ \widehat H & \widehat D \widehat H},\]
where $\wt H$ and $\widehat H$ are signed row permutations of $H_{k-1}$, and $\wt D$ and $\widehat D$ are $\pm 1$ diagonal matrices. Let $Q$ be the signed permutation matrix that reorders and re-signs $\wt H$ to $\widehat H$, i.e., $\widehat H =  Q \wt H$. Then $Q \wt D Q^\top = - \widehat D$, and so $Q \wt D \wt H = - \widehat D \widehat H$. We have
\[A = \bmat{ \wt H & \wt D \wt H \\ \widehat H & \widehat D \widehat H} = \bmat{ I & 0 \\ Q & I } \bmat{\wt H & \wt D \wt H \\ 0 & 2 \widehat D \widehat H}.\]
Because $A$ has an LU factorization and the leading principal minors of $A$ equal the leading principal minors of the second matrix in the above decomposition, both $\wt H $ and $ \widehat D \widehat H$ have LU factorizations, which we induct on. Let $\wt H = \wt L \wt U$ and $ \widehat D \widehat H = \widehat L \widehat U$. Then $A$ has the LU factorization
\[\bmat{ \wt H & \wt D \wt H \\ \widehat H & \widehat D \widehat H} = \bmat{\wt L & 0 \\ Q \wt L & \widehat L } \bmat{\wt U & \wt L^{-1} \wt D \wt H \\ 0 & \widehat U - \widehat L^{-1} Q \wt D \wt H}\]
and
\[ L^{-1} =  \bmat{\wt L^{-1} & 0 \\ -\widehat L^{-1} Q  & \widehat L^{-1}} .\]
The matrix $Q$ is orthogonal, and so, by induction,
\[\|L\|_F^2 = \|\wt L\|_F^2 + \|Q \wt L\|_F^2 + \| \widehat L \|_F^2 = 2\|\wt L\|_F^2+ \| \widehat L \|_F^2 = 3 \times 3^{k-1} = 3^k\]
and
\[\|L^{-1}\|_F^2 = \|\wt L^{-1}\|_F^2 + \|\widehat L^{-1} Q\|_F^2 + \| \widehat L^{-1} \|_F^2 = \|\wt L^{-1}\|_F^2+ 2\| \widehat L^{-1} \|_F^2 = 3 \times 3^{k-1} = 3^k.\]
\end{proof}

When both rows and columns are reordered, the uniformity of the Frobenius norms of $L$ and $U$ no longer persists for Sylvester Hadamard matrices. However, we can produce a lower bound for any Hadamard matrix, thus providing a lower bound for any Hadamard matrix under row and column pivoting (as this class is closed under both operations).

\begin{theorem}\label{a8}
Let $n \ge 2$ and $H$ be an $n \times n$ Hadamard matrix with an LU factorization $H = LU$. Then
\[\|L\|_F^2 \ge n \sum_{k = 1}^n \frac{(k-1)^{k-1}}{k^k} \ge \frac{n \log n}{e} \qquad \text{and} \qquad \|U\|_F^2 \ge (n-1) + \sum_{k = 1}^n \frac{k^k}{(k-1)^{k-1}} \ge \frac{e n^2}{2} .\]
\end{theorem}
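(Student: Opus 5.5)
The plan rests on two facts: the orthogonality relation $HH^\top=H^\top H=nI$, and Hadamard's determinant inequality applied to the leading minors. Write $u_k=U_{kk}$ and $d_k=\det H[[k],[k]]$, so $u_k=d_k/d_{k-1}$. Every leading $k\times k$ block has $\pm1$ entries, so Hadamard's inequality gives $d_k^2\le k^k$; for $k=n$ we have equality, $d_n^2=n^n$. Setting $x_k=u_k^{-2}$, this reads
\[\prod_{j\le k}x_j\ge k^{-k}\quad (k\in[n]),\qquad \prod_{j\le n}x_j=n^{-n}.\]
Both Frobenius norms will first be bounded below by expressions in the $x_k$, and then minimized subject to these constraints.

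\emph{The bound for $L$.} Since $L=HU^{-1}$ and $H^\top H=nI$, each column satisfies $\|L[:,k]\|^2=n\|U^{-1}\bm e_k\|^2\ge n\,(U^{-1})_{kk}^2=n x_k$. Hence $\|L\|_F^2\ge n\sum_k x_k$, and it remains to minimize $\sum_k x_k$ under the prefix constraints $\prod_{j\le k}x_j\ge k^{-k}$. I expect the minimizer to make every prefix constraint tight, i.e.\ $x_k=(k-1)^{k-1}/k^k$. This value of $x_k$ is exactly the summand in the claimed bound.

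To justify the minimizer I would use a smoothing argument. Set $y_k=\log x_k$; the constraints become prefix-sum lower bounds $\sum_{j\le k}y_j\ge -k\log k$, and the objective $\sum_k e^{y_k}$ is convex. The tight sequence $y_k^\ast=-k\log k+(k-1)\log(k-1)$ is increasing in $k$, so $x^\ast_k$ is decreasing. Now take any feasible $y$ that is not tight at some prefix. Lowering an earlier coordinate while raising a later one by the same amount preserves every later prefix sum. I would argue that such moves cannot increase the objective once the sequence is arranged to be nondecreasing, so the optimum is at the tight point. Alternatively, a short majorization (Karamata) argument should show that the vector $y^\ast$ is majorized in the appropriate weak sense by any feasible $y$. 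The final estimate $\sum_k (k-1)^{k-1}/k^k\ge \frac1e\sum_k \frac1k\ge \log n/e$ then follows from $(1-1/k)^{k-1}\ge 1/e$.

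\emph{The bound for $U$.} Take transposes: $H^\top=U^\top L^\top$ is again Hadamard, with LU factorization $(U^\top D^{-1})(DL^\top)$ where $D=\diag(u_k)$. The same column argument as for $L$ gives $\|U[k,:]\|^2\ge u_k^2$, but this alone only yields $\sum_k u_k^2\ge n^2$: under the product constraint the minimum is at $u_k^2=n$, which respects all prefix constraints. The extra terms must therefore come from the off-diagonal entries. I would combine $\|U\|_F^2=n\|L^{-1}\|_F^2$ (from $UU^\top=nL^{-1}L^{-\top}$) with row-wise estimates. The row-$k$ estimate combines the two bounds
\[\|U[k,:]\|^2\ge u_k^2\qquad\text{and}\qquad \|U[k,:]\|^2=n\|\bm e_k^\top L^{-1}\|^2\ge n.\]
I would then set up a joint optimization in which small pivots force large off-diagonal mass. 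The aim is a per-row bound of the form $\|U[k,:]\|^2\ge 1+k^k/(k-1)^{k-1}$ for $k\ge2$ in an averaged sense, matching the tight pivot profile $u_k^2=k^k/(k-1)^{k-1}$. The asymptotics $\sum_k k(1+1/(k-1))^{k-1}\approx e n^2/2$ would then finish.

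This $U$ bound is the main obstacle: unlike the $L$ bound, it cannot follow from the pivots alone, and I have not yet pinned down how to capture the off-diagonal contribution. My first attempt would be to write $\|U[k,:]\|^2-u_k^2$ as $n\|\bm e_k^\top L^{-1}\|^2-u_k^2$, expand $\bm e_k^\top L^{-1}$ through the Schur-complement identities, and lower bound it using the same prefix-product constraints.
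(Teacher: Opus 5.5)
Your treatment of $L$ is fine and is the paper's argument: $\|L\|_F^2=n\|U^{-1}\|_F^2\ge n\sum_k u_k^{-2}$, then minimize under Hadamard's prefix bounds. Your Karamata route replaces the paper's first-order optimality check at the corner $\log\Delta_k^2=k\log k$.

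The genuine gap is the $U$ half, and it rests on a false claim. The choice $u_k^2=n$ does \emph{not} respect the prefix constraints, since it gives $d_k^2=n^k>k^k$ for every $k<n$, contradicting Hadamard's inequality. So the pivots alone already give much more than $n^2$. The ``off-diagonal mass'' program you set up is therefore unnecessary. It is also unfinished, and its literal per-row form fails at $k=n$, where the row of $U$ is just $u_n$ (e.g.\ $u_2^2=4<5$ for the $2\times 2$ Sylvester matrix).

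Since $\sum_k u_k^2=\sum_k e^{-y_k}$ is again a sum of a convex function of the $y_k$, the very majorization you propose for $L$ gives $\sum_k u_k^2\ge\sum_k k^k/(k-1)^{k-1}$. The extra $n-1$ is free: the first row of $U$ is the first row of $H$, whose $n-1$ off-diagonal entries are $\pm1$. That is exactly the paper's proof. It uses the diagonal plus first row for $U$, the diagonal of $U^{-1}$ for $L$, and one convex minimization in $\log\Delta_k^2$ whose minimizer (all Hadamard bounds tight) is shared by both objectives.

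Two smaller corrections. First, $y_k^\ast=-\log\bigl(k^k/(k-1)^{k-1}\bigr)$ is \emph{decreasing}, not increasing. You correctly say $x_k^\ast$ decreases, which is the same statement. This monotonicity is what makes the majorization go through. For feasible $y$, the sum of its $m$ largest entries is at least $\sum_{j\le m}y_j\ge\sum_{j\le m}y_j^\ast$, which is the sum of the $m$ largest entries of $y^\ast$. The totals agree, so $y\succ y^\ast$, and Karamata applies to both $e^{t}$ and $e^{-t}$.

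Second, the final step for $U$ must be an inequality, not an asymptotic. The sum $\sum_k k^k/(k-1)^{k-1}$ alone can fall below $en^2/2$ (it is $5<2e$ at $n=2$), and it is the $n-1$ term that closes this.
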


\begin{proof}
Let $\Delta_0^2:=1$ and $\Delta_k^2 = \prod_{i = 1}^k U_{ii}^2$, $k = 1,\ldots,n$, be the squared determinant of the leading $k \times k$ principal submatrix of $U$. By summing the squares of the diagonal entries and the $n-1$ off-diagonal entries in the first row of $U$, we have
\[\|U\|_F^2 \ge (n-1) + \sum_{k = 1}^n \frac{\Delta^2_k}{\Delta^2_{k-1}}\]
and, summing the squares of the diagonal entries of $U^{-1}$,
\[\|U^{-1}\|_F^2 \ge \sum_{k = 1}^n \frac{\Delta^2_{k-1}}{\Delta^2_k}.\]
By Hadamard's inequality, and the fact that $H$ is a Hadamard matrix, $\Delta^2_k \le k^k$ for $k \in [n-1]$, and $\Delta_n^2 = n^{n}$. Now, define $\bm x_k=\log \Delta_k^2$. We have $\bm x_0 = 0$, $\bm x_n = n \log n$, and $0\le \bm x_k \le k \log k$ for $k \in [n-1]$. Consider the functions
\[f(\bm x) = \sum_{k = 1}^{n} e^{x_k-x_{k-1}} \qquad \text{and} \qquad g(\bm x) = \sum_{k = 1}^{n} e^{-(x_k-x_{k-1})}.\]
Both are the sum of exponentials of linear functions, and so are convex functions over the convex set $0 \le \bm x_k \le k \log k$ for $k \in[n-1]$. Let $\bm x^*$ be the corner point $\bm x_k = k \log k$ for $k \in [n-1]$. We have
\[\frac{\partial f(\bm x^*)}{\partial \bm x_k} = e^{\bm x_k - \bm x_{k-1}} - e^{\bm x_{k+1}- \bm x_k} = \frac{k^k}{(k-1)^{k-1}}- \frac{(k+1)^{k+1}}{k^k} <0\]
and
\[\frac{\partial g(\bm x^*)}{\partial \bm x_k} = -e^{-(\bm x_k-\bm x_{k-1})} + e^{-(\bm x_{k+1}-\bm x_k)} = - \frac{(k-1)^{k-1}}{k^k} + \frac{k^k}{(k+1)^{k+1}} < 0.\]
Therefore, $\nabla f(\bm x^*)^\top(\bm y - \bm x^*) \ge 0$ and $\nabla g(\bm x^*)^\top(\bm y - \bm x^*) \ge 0$ for all feasible $\bm y$, and so $\bm x^*$ is the minimizer of both $f$ and $g$ over the convex set. This gives the lower bounds
\[\|U\|_F^2 \ge (n-1) + \sum_{k = 1}^n \frac{k^k}{(k-1)^{k-1}} \qquad \text{and} \qquad \|U^{-1}\|_F^2 \ge \sum_{k = 1}^n \frac{(k-1)^{k-1}}{k^k}.\]
Finally, because $H$ is a Hadamard matrix,
$\|L\|_F^2 = \|H U^{-1}\|_F^2 = n \|U^{-1}\|_F^2$.
\end{proof}

\section{Hardness of Optimal Row Exchanges}
The majority of research regarding pivoting in Gaussian elimination involves fixing a pivot strategy and analyzing the resulting element growth. One notable exception is the theorem \cite[Theorem 2.1]{b12} of Higham and Higham, which provides a lower bound for growth over all row and column permutations. Using this theorem, they provided numerous examples of matrices that have $\growth(PAQ)\ge n/2$ for all permutation matrices $P$ and $Q$ \cite[Equations 2.1 to 2.6]{b12}. Here, we show that recognizing matrices that admit a bounded-growth row ordering is NP-complete, even for the fixed threshold of growth equal to one.

Several related elimination-ordering problems are known to be computationally hard. For instance, computing a minimum-fill ordering for sparse symmetric elimination is NP-hard \cite{b37}, determining the pivots selected by partial or complete pivoting is P-complete \cite{b38}, recognizing matrices admitting a perfect partial elimination scheme is NP-hard \cite{b39}, and global maximum volume column selection, which underlies maximum-volume and rank-revealing pivoting, is NP-hard \cite{b40}.

Our reduction uses incidence matrices between variables and clauses. Let $i \in [n]$ index variables, $j \in[m]$ index clauses, and, for an assignment $\bm b\in\set{0,1}^n$, let  \(M_{\bm b}\in\{0,1\}^{n\times m}\) be such that \((M_{\bm b})_{ij}=1\) when setting \(x_i= \bm b_i\) implies that the $j$th clause is true. 
The \(j\toth\) coordinate of \(\one^\top M_{\bm b}\) is the number of unique literals of the $j^{th}$ clause satisfied by \(\bm b\). The use of such a matrix in a hardness reduction is not novel. Our new ingredient here is to place \(M_{\bm b}\) inside a block Gaussian elimination gadget so that elimination produces the row
\[
    \frac32\one^\top-\frac12\one^\top M_{\bm b}.
\]
The key property of this vector is that its \(j\toth\) entry is at most \(1\) exactly when the $j\toth$ clause is satisfied.

\begin{definition}
    Let $f$ be a 3-CNF formula with variables $x_i$ indexed by $[n]$ and clauses $c_j$ indexed by $[m]$. For an assignment $\bm b\in\set{0,1}^n$, let $M_{\bm b}$ be the matrix
    $(M_{\bm b})_{ij}=[[(x_i= \bm b_i)\implies c_j]]$ and define
    \[A_f = \, \begin{blockarray}{cccccc}
         {\scriptstyle 1} & {\scriptstyle n} & {\scriptstyle 1} & {\scriptstyle n} & {\scriptstyle m} & \\[-.25 ex]
       \begin{block}{[ccccc]c}
       & & & & & \\[-1.5 ex]
         1  &  & & & -\one^\top &{ \scriptstyle 1} \\[.25 ex] 
  & I & & \phantom{-}\frac12 I & M_\zero &{ \scriptstyle n} \\[.25 ex]  
   & I & & - \frac12 I & M_\one &{ \scriptstyle n} \\[.25 ex]  
   \frac12  & \frac12 \one^\top & 1 & & \one^\top &{ \scriptstyle 1} \\[.25 ex]  
      &  & & & I &{ \scriptstyle m} \\[.25 ex] 
    \end{block}
     \end{blockarray} \in \Mat_{2n+m+2}(\tfrac12 \Z).\]
    Notice that $A_f$ has dimension $2n+m+2$ and $m$ and can be constructed from $f$ in polynomial time.
\end{definition}

\begin{theorem}\label{a9}
If $f$ is satisfiable, then there is a permutation $P$ such that $\growth(PA_f)=1$. If $f$ is not satisfiable, then for every permutation $P$, one has $\growth(PA_f)\ge\frac32$.
\end{theorem}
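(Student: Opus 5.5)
The plan is to follow the block structure of $A_f$ directly. Since pivots are taken in column order, the first $n+2$ steps of elimination on $PA_f$ process the first column, the $n$ variable columns, and the single column containing the entry $1$ of the row $\bigl(\tfrac12,\tfrac12\one^\top,1,0,\one^\top\bigr)$, which I call $r$. In both directions I track only the rows that can be nonzero in the current pivot column. Throughout I use $\maxnorm{A_f}=1$, so $\growth(PA_f)\ge 1$ always.

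\emph{Satisfiable case.} Fix a satisfying assignment $\bm b$ and order the rows as follows:
\begin{itemize}
\item the top row $(1,0,0,0,-\one^\top)$;
\item for each $i\in[n]$, the row of the $M_{\bm b_i}$ block belonging to variable $i$ (i.e.\ the $i$th row of the $M_\zero$ block if $\bm b_i=0$, else of the $M_\one$ block);
\item then $r$;
\item then the $n$ unchosen variable rows;
\item finally the $m$ clause rows $(0,0,0,0,\bm e_j^\top)$.
\end{itemize}
I then run the elimination.
\begin{itemize}
\item Eliminating with the top row (multiplier $\tfrac12$ for $r$) turns $r$ into $\bigl(0,\tfrac12\one^\top,1,0,\tfrac32\one^\top\bigr)$.
\item Each chosen variable row has pivot $1$ and is zero in the other variable columns. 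Eliminating it subtracts $\tfrac12$ of it from $r$, contributing $\mp\tfrac14$ in the $\pm\tfrac12 I$ block and $-\tfrac12(M_{\bm b})_{i,:}$ in the clause block. It also subtracts $1$ times it from its partner row, which becomes $\bigl(0,0,0,\mp\bm e_i^\top,(M_{\bm 1-\bm b}-M_{\bm b})_{i,:}\bigr)$.
\item $r$ is then the only row nonzero in its pivot column, with pivot $1$. Its clause part is $\tfrac32\one^\top-\tfrac12\one^\top M_{\bm b}$, whose entries lie in $[0,1]$ exactly because every clause has $1$ to $3$ satisfied literals. Its $\pm\tfrac14$ entries are also harmless.
\item The partner rows then act as pivots $\pm1$ on an identity-like block, with entries in $[-1,1]$. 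Since the clause rows vanish there, those rows are untouched, and the clause rows finish with pivot $1$.
\end{itemize}
All multipliers are $\tfrac12$, $1$, or $0$, and all $U$ entries have magnitude at most $1$. This gives $\growth=1$.

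\emph{Unsatisfiable case.} I show that any $P$ for which $PA_f$ has an LU factorization forces either an $L$ entry $\ge2$ or a $U$ entry $\ge\tfrac32$.
\begin{itemize}
\item \emph{Column 1.} Only the top row and $r$ are nonzero there. If $r$ is the first pivot, the top row gets multiplier $2$. So the top row comes first.
\item \emph{Variable column $i$.} By induction over the variable columns, the only candidate pivots are $r$ (entry $\tfrac12$) and the two unused rows of pair $i$ (entry $1$). The latter are still unmodified in column $i$, because earlier pivots from other pairs vanish in that column. Rows of other pairs are zero there. If $r$ pivots, some pair-$i$ row later gets multiplier $2$. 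So each variable column pivots on one row of its own pair, which defines an assignment $\bm b$.
\item \emph{Column $n+2$.} By the computation of the first case, $r$ is now the only row nonzero in this column, so it pivots. Its $U$ row has clause entries $\tfrac32-\tfrac12(\one^\top M_{\bm b})_j$. An unsatisfied clause $j$ gives an entry $\tfrac32$, hence $\growth\ge\tfrac32$.
\end{itemize}

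The main obstacle is making the induction over the variable columns airtight. I must check that the pair rows' entries in the not-yet-processed variable columns are never altered before their turn. The point is that each elimination step only modifies $r$ and the partner of the pivot row, because the other rows are zero in the pivot column. Singular orderings need no argument, since they give $\growth=+\infty$ by convention.
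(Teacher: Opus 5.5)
Your proposal is correct and follows the paper's proof essentially verbatim. It uses the same row order (top row, the $\bm b$-selected variable rows, $r$, the partner rows, the clause rows), giving $\maxnorm L=\maxnorm U=1$, and the same forcing argument: pivoting on $r$ in column 1 or in any variable column produces a multiplier $2$, so the $(n+2)$nd row of $U$ must contain $\tfrac32\one^\top-\tfrac12\one^\top M_{\bm b}$. The differences are presentational: you run the elimination step by step and explain why the pair rows are untouched before their turn, where the paper simply exhibits $L$ and $U$. As a small aside, $\growth\ge1$ holds because $L$ has unit diagonal, not because $\maxnorm{A_f}=1$.
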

\begin{proof}
Suppose that $f$ is satisfiable. We construct a permutation $P$ such that $\growth(PA_f)=1$. First note that $\maxnorm{A_f}=1$, so it suffices to bound $\maxnorm L$ and $\maxnorm U$ by one.
For an assignment $\bm b \in\set{0,1}^n$, let $P_{\bm b}$ be the associated permutation that swaps rows $1+i$ and $1+i+n$ if and only if $\bm b_i=1$. Then
\[P_{\bm b} A_f = \, \begin{blockarray}{cccccc}
         {\scriptstyle 1} & {\scriptstyle n} & {\scriptstyle 1} & {\scriptstyle n} & {\scriptstyle m} & \\[-.25 ex]
       \begin{block}{[ccccc]c}
       & & & & & \\[-1.5 ex]
         1  &  & & & -\one^\top &{ \scriptstyle 1} \\[.25 ex] 
  & I & & \phantom{-}\frac12 D & M_{\bm b} &{ \scriptstyle n} \\[.25 ex]  
   & I & & - \frac12 D & M_{\one - \bm b} &{ \scriptstyle n} \\[.25 ex]  
   \frac12  & \frac12 \one^\top & 1 & & \one^\top &{ \scriptstyle 1} \\[.25 ex]  
      &  & & & I &{ \scriptstyle m} \\[.25 ex] 
    \end{block}
     \end{blockarray}, \]
     where $D = I - 2 \diag(\bm b)$ is a diagonal $\pm 1$ matrix. We apply one further permutation, which swaps the third and fourth block rows, to form $\widetilde P_{\bm b}$:
     \[\widetilde P_{\bm b} A_f = \, \begin{blockarray}{cccccc}
         {\scriptstyle 1} & {\scriptstyle n} & {\scriptstyle 1} & {\scriptstyle n} & {\scriptstyle m} & \\[-.25 ex]
       \begin{block}{[ccccc]c}
       & & & & & \\[-1.5 ex]
         1  &  & & & -\one^\top &{ \scriptstyle 1} \\[.25 ex] 
  & I & & \phantom{-}\frac12 D & M_{\bm b} &{ \scriptstyle n} \\[.25 ex]  
   \frac12  & \frac12 \one^\top & 1 & & \one^\top &{ \scriptstyle 1} \\[.25 ex]  
      & I & & - \frac12 D & M_{\one - \bm b} &{ \scriptstyle n} \\[.25 ex]  
      &  & & & I &{ \scriptstyle m} \\[.25 ex] 
    \end{block}
     \end{blockarray}.\]
  By inspection, $\widetilde P_{\bm b} A_f = L U$, where
  \[L = \, \begin{blockarray}{cccccc}
         {\scriptstyle 1} & {\scriptstyle n} & {\scriptstyle 1} & {\scriptstyle n} & {\scriptstyle m} & \\[-.25 ex]
       \begin{block}{[ccccc]c}
       & & & & & \\[-1.5 ex]
         1  &  & & &  &{ \scriptstyle 1} \\[.25 ex] 
  & I & &  &  &{ \scriptstyle n} \\[.25 ex]  
   \frac12  & \frac12 \one^\top & 1 & &  &{ \scriptstyle 1} \\[.25 ex]  
      & I & & I & &{ \scriptstyle n} \\[.25 ex]  
      &  & & & I &{ \scriptstyle m} \\[.25 ex] 
    \end{block}
     \end{blockarray} \qquad \text{and} \qquad U = \, \begin{blockarray}{cccccc}
         {\scriptstyle 1} & {\scriptstyle n} & {\scriptstyle 1} & {\scriptstyle n} & {\scriptstyle m} & \\[-.25 ex]
       \begin{block}{[ccccc]c}
       & & & & & \\[-1.5 ex]
         1  &  & & & -\one^\top &{ \scriptstyle 1} \\[.25 ex] 
  & I & & \phantom{-}\frac12 D & M_{\bm b} &{ \scriptstyle n} \\[.25 ex]  
     & & 1 & -\frac14 \one^\top D & \frac32 \one^\top - \frac12 \one^\top M_{\bm b} &{ \scriptstyle 1} \\[.25 ex]  
      &  & & -  D & M_{\one - \bm b} - M_{\bm b} &{ \scriptstyle n} \\[.25 ex]  
      &  & & & I &{ \scriptstyle m} \\[.25 ex] 
    \end{block}
     \end{blockarray} .\]
     We have $\maxnorm L = 1$ and $\maxnorm U = \max \{1,\norm{\frac32\one-\frac12M_{\bm b}^\top\one}_\infty\}$. If $\bm b$ is indeed a satisfying assignment, then $M_{\bm b}$ has $1,2,$ or $3$ non-zero entries in each column, all equal to $1$, and so $M_{\bm b}^\top\one\in\set{1,2,3}^m$. This means $\maxnorm U = 1$ and $\growth(\wt P_{\bm b} A_f ) = 1$, as desired.

Conversely, suppose that there exists a permutation $P$ with $\growth(PA_f)<\frac32$. The first pivot must come from the first block, as using the fourth block leads to $\maxnorm{L} \ge 2$. Now, consider the next $n$ pivots chosen. Again, during these $n$ steps,  using the fourth block leads to $\maxnorm{L} \ge 2$, so the $(1+i)\toth$ pivot must have index $1+i$ or $1+i+n$, corresponding to the choice $\bm b_i =0$ or $\bm b_i = 1$. In particular, the first $n+1$ rows of the permutation $P$ must agree with a permutation $P_{\bm b}$ for some $\bm b$. There is only one choice for the $(n+2)^{nd}$ pivot, the entry in the third block row, and so the third block row and fifth block column of the resulting $U$ must contain $\frac32 \one^\top - \frac12 \one^\top M_{\bm b}$. In order for $\growth(PA_f)<\frac32$, we must have $\norm{\frac32\one-\frac12M_{\bm b}^\top\one}_\infty<\frac32$. In particular, every entry of $M_{\bm b}^\top\one$ must be positive, which means that $\bm b$ is a satisfying assignment of $f$.
\end{proof}

Theorem \ref{a9} immediately implies our desired Theorem \ref{a10}. Scaling $A_f$ by a factor of two places all entries in $\Z$ and preserves the growth factor of it and all its permutations. Additionally, the factorization provided in the above proof is valid for any choice of $\bm{b}$, independent of whether it satisfies $f$ or not, and provides a certification that $A_f$ is non-singular, as the diagonal blocks $1$, $I$, $1$, $-D$, and $I$ of $U$ are all non-singular. To see that this language is a member of NP, we note that the computation of a minor of a matrix can be performed in polynomial time, and its bit length is bounded. In particular, if the entries of $A \in \Mat_n(\R)$ have bit length at most $\ell$, then, by Hadamard's inequality, every minor of $A$ has bit length $O(n(\ell + \log n))$. The existence of an LU factorization of a matrix and the entries of the LU factorization, if it exists, are therefore computable in polynomial time.

\section{Large Growth of Randomized Partial Pivoting}
Recall that randomized partial pivoting, $\RandPP_p$, is a procedure for performing row exchanges where the next pivot index $\pi_{k+1}$ is selected according to the marginal distribution
\eq{\label{a17} \Pr(\pi_{k+1}=i\,|\,\set{\pi_1,\ldots,\pi_{k}}=S)\propto\abs{\frac{\det A[S+\set i,[k+1]]}{\det A[S,[k]]}}^p,}
where $\pi_1,\ldots,\pi_{k}$ are the pivot rows chosen at steps $1$ through $k$ of the algorithm and $p \in (0,\infty)$. By \Cref{a0},
\[A\at{k} = A[S^c,[k]^c]- A[S^c,[k]]A[S,[k]]^{-1} A[S,[k]^c],\]
and so, indexing the rows and columns of $A\at k$ using the indices of $S^c$ and $[k]^c$, respectively,
\[a\at k_{ij} = a_{ij} - A[\set i,[k]] A[S,[k]]^{-1} A[S,\set j]. \]
Therefore, by Schur's formula,
\[\Pr(\pi_{k+1}=i\,|\,A\at k)\propto\abs{\frac{\det A[S+\set i,[k+1]]}{\det A[S,[k]]}}^p = \abs{a\at k_{i,k+1}}^p. \]
When $p\to\infty$, one recovers ordinary deterministic partial pivoting (up to random selection among ties); when $p\to0$, one obtains uniform pivoting. Neither extreme is desirable: partial pivoting has exponential growth in $U$ in the worst case, and uniform pivoting is liable to select tiny pivots, leading to arbitrarily large growth in $L$. For this reason, the numerical analysis community has focused on intermediate values of $p$, most naturally $p=1$ and $p=2$, which favors large pivots, and, hence, usually moderate multipliers, while also occasionally deviating from the greedy trajectory.

A folklore expectation that followed naturally was that this process should introduce enough randomness to ensure polynomial growth with high probability. We are not aware of a formal statement of this conjecture in the published literature, but similar ideas have appeared. The complete-pivoting analogue to $\RandPP_p$ is to select both a row and column index, in proportion to a power of the entry, $\propto\sabs{a_{ij}\at k}^p$; the case $p=2$ was introduced in \cite{b41}. In addition, there is a more specialized analogue in the positive-definite setting, where one samples the row and column indices to be the same, in proportion to a power of the diagonal entry, $\propto\sabs{a_{ii}\at k}^p$ \cite{b42}. 

Separately, randomization is known to yield results elsewhere in the elimination process.  Average-case and
smoothed analyses randomize the input matrix and establish polynomial
growth or precision bounds for Gaussian inputs and Gaussian
perturbations
\cite{b13,b43,b44,b16}.  Randomized complete pivoting instead uses a random
sketch of the Schur complement to locate an approximately maximal
pivot, obtaining complete pivoting-type growth bounds with high
probability \cite{b45}.  In both cases, the source and
role of the randomness differ from that of \(\RandPP_p\), though they highlight the apparent utility of randomization.

Here, we disprove any such conjecture that sampling $P\sim\RandPP_p(A)$ results in low growth with high probability. In particular, we provide an example for which the growth is nearly exponential.
Our construction is the $Q$ factor from the QR decomposition of the following ill-conditioned transposed Jordan block
\eq{\label{a18}N=\pmat{z\\1&z\\&1&z\\&&\ddots&\ddots\\&&&1&z}=QR}
for $z=\exp\big(-\frac1{pn^\alpha}\big)\approx1-\frac1{pn^\alpha}$, for a tunable $\alpha\in(0,1)$ which is allowed to depend on $n$.
Our strategy is to show that the final pivot chosen by $\RandPP_p(Q)$ concentrates toward the end of the matrix. That is, if $\pi_1,\ldots,\pi_n$ is the random sequence of pivot indices chosen by $\RandPP_p(Q)$, then $\pi_n$ will concentrate on the set $[n-o(n),n]$. Under this event, the growth of $Q$ under $\RandPP_p$ pivoting will resemble the growth of $Q$ with no pivoting up to $n-o(n)$ iterations, which is exponential in $z$.

\begin{lemma}\label{a19}
    If $(\pi_1,\ldots,\pi_n)\sim\RandPP_p(Q)$, then $\Pr(\pi_n\ge n-n^\alpha)\ge1-ne^{-n^\alpha/(e+1)}$.
\end{lemma}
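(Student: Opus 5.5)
The plan is to get rid of the orthogonal factor entirely and run $\RandPP_p$ on the bidiagonal matrix $N$ in \Cref{a18}, where the elimination is explicit.

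\textbf{Step 1: reduce from $Q$ to $N$.} By \Cref{a17}, the conditional law of $\pi_{k+1}$ given $S$ is proportional to $\abs{\det A[S+\set i,[k+1]]/\det A[S,[k]]}^p$. Since $N=QR$ with $R$ upper triangular, we have $Q=NR^{-1}$. Right-multiplying by the upper triangular matrix $R^{-1}$ multiplies $\det A[T,[j]]$ by $\det R^{-1}[[j],[j]]$, which does not depend on $T$. Hence each normalized ratio is unchanged, and $\RandPP_p(Q)$ and $\RandPP_p(N)$ are the same distribution on permutations. It therefore suffices to analyze $\RandPP_p(N)$, using the equivalent rule $\Pr(\pi_{k+1}=i\mid A\at k)\propto\abs{a\at k_{i,k+1}}^p$.

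\textbf{Step 2: describe the elimination dynamics.} I will show by induction on $k$ that, before the pivot in column $k+1$ is chosen, column $k+1$ of the current Schur complement has exactly two nonzero entries.
\begin{itemize}
\item The first is a ``carried'' row $c$ with entry of magnitude $z^{d}$, where $d\ge1$ counts the steps since $c$ became the carried row.
\item The second is the fresh row $k+2$, whose entry is $1$ (from the original $N_{k+2,k+1}=1$).
\end{itemize}
All other remaining rows are still untouched rows of $N$.

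The inductive step has two cases.
\begin{itemize}
\item If the carried row is picked (pivot $\pm z^d$), eliminating it only clears the $1$ in row $k+2$, leaving row $k+2$ with its diagonal entry $z$. Row $k+2$ becomes the new carried row with $d=1$.
\item If row $k+2$ is picked (pivot $1$), the carried row is updated by subtracting $\pm z^{d}$ times row $k+2$. This puts $\mp z^{d+1}$ in column $k+2$, so the carried row persists with $d\mapsto d+1$.
\end{itemize}
The base case is column $1$, with row $1$ carrying $z$ and row $2$ carrying $1$. Consequently, at each step the carried row is kept, i.e.\ not chosen, with probability $1/(1+z^{pd})$, and $\pi_n$ is whichever row is carried at the end.

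\textbf{Step 3: tail bound and union bound.} The event $\pi_n< n-n^\alpha$ forces some carried row to survive at least $L=\lceil n^\alpha\rceil$ consecutive steps, with $d=1,\dots,L$. With $z=\exp(-1/(pn^\alpha))$ we have $z^{pd}=e^{-d/n^\alpha}\ge e^{-1}$ for $d\le n^\alpha$. So each survival factor is at most $1/(1+e^{-1})=e/(e+1)=1-\frac1{e+1}$, and the probability of surviving $L$ steps is at most $(1-\frac1{e+1})^{n^\alpha}\le e^{-n^\alpha/(e+1)}$. A row becomes carried at some starting time, and there are at most $n$ starting times, so a union bound gives $\Pr(\pi_n<n-n^\alpha)\le ne^{-n^\alpha/(e+1)}$.

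The main obstacle is Step 2. The induction has to be set up carefully: the sign bookkeeping, the boundary columns (the first column, and the last step, where only one row remains), and checking that no third nonzero entry ever appears in the active column. Step 1 is a standard invariance, and Step 3 is a routine geometric bound.
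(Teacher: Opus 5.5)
Your proposal is correct and follows the paper's proof: the same reduction from $Q$ to $N$ by invariance under right multiplication by upper triangular matrices, the same two-candidate Markov chain, and the same excursion tail bound with a union bound over start times. Your age $d$ of the carried row is the paper's $1+\delta_{k+1}$, with the same keep-probability $1/(1+z^{pd})$; the only difference is that you obtain the weights $z^{pd}$ versus $1$ by tracking the Schur complement directly, while the paper reads them off from $\det N[[k+1]-\set{m_{k+1}},[k]]=z^{m_{k+1}-1}$. (As in the paper, the factor with $d=\lceil n^\alpha\rceil$ can slightly exceed $e/(e+1)$ when $n^\alpha\notin\Z$, but the early factors, which are close to $\tfrac12$, more than compensate, so the bound $(e/(e+1))^{n^\alpha}$ still holds.)
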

\begin{proof}
The central insight is that the expression for the distribution of the next pivot index from \Cref{a17} is invariant under multiplication of $A$ on the right by non-singular upper triangular matrices. In particular, 
\[\det\pare{Q[S,:] R[:,[k]]}  = \det \pare{Q[S,[k]]} \det\pare{R[[k],[k]]},\]
and so
\[\RandPP_p(N)\overset{d}{=}\RandPP_p(Q).\]
This is quite convenient, as $\RandPP_p(N)$ is easier to control. 

Note that the first $k$ columns of $N$ only have non-zero entries in the first $k+1$ rows, and so the first $k$ pivot indices are contained in $\set{\pi_1,\ldots,\pi_k}\subset[k+1]$. The set $\set{\pi_1,\ldots,\pi_k}$ has exactly one element missing from $[k+1]$. For our purposes, it is convenient to think not of sets of pivot indices, but of the sequence of row indices that are \textit{not} pivots: let $m_{k+1}$ be the unique element of $[k+1]-\set{\pi_1,\ldots,\pi_k}$. Note that $\pi_n=m_n$. 

The relevant submatrices have a particularly nice block diagonal structure:
\[
N\left[[k+1]-\set{m_{k+1}},[k]\right]
=
\left(
\begin{array}{c|c}
\begin{matrix}
z      &        &        \\
1      & \ddots &        \\
       & \ddots & z
\end{matrix}
&

\\ \hline

&
\begin{matrix}
1      & z      &        \\
       & \ddots & \ddots \\
       &        & 1
\end{matrix}
\end{array}
\right),
\]
where the top left $(m_{k+1}-1)\times(m_{k+1}-1)$ block is lower triangular with diagonal entries equal to $z$, and the lower right $(k-m_{k+1}+1)\times (k-m_{k+1}+1)$ block is upper triangular with diagonal entries equal to $1$. Consequently,
\[
\det N[[k+1]-\set{m_{k+1}},[k]]=z^{m_{k+1}-1},
\]
which translates to
    \spliteq{}{
    &\Pr\left(\pi_{k+1}=m_{k+1}\,|\,\set{\pi_1,\ldots,\pi_{k}}=[k+1]-\set{m_{k+1}}\right)\propto z^{(k+2-m_{k+1})p},
    \\
    &\Pr\left(\pi_{k+1}=k+2\,|\,\set{\pi_1,\ldots,\pi_{k}}=[k+1]-\set{m_{k+1}}\right)\propto 1,}
    and all other probabilities are zero.
Note that this uniquely determines $m_{k+2}$: if $\pi_{k+1}=m_{k+1}$, then $m_{k+2}=k+2$, and if $\pi_{k+1}=k+2$, then $m_{k+2}=m_{k+1}$. By changing variables to $\delta_k=k-m_k$, this becomes a Markov chain
\[
\delta_{k+1}=\begin{cases}
    \delta_k+1 & \text{with probability }\dfrac1{1+z^{(1+\delta_k)p}}\\
    0 & \text{otherwise}
\end{cases}.\]
From this, one can bound the height of the excursions of $\delta_k$ from $0$:
\[
\Pr(\delta_{k+t}=t \,|\,\delta_k=0)
=\prod_{j=1}^t\pare{1 + z^{jp} }^{-1}.
\]
Note that $e^{-1}\le z^{jp}\le1$ is satisfied for $j\le1/{(p\log(1/z))}=n^\alpha$, so for $t> n^\alpha$ we have
\[
\Pr(\delta_{k+t}=t \,|\,\delta_k=0)
\le\prod_{j=1}^{\ceil{n^\alpha}}\pare{1 + z^{jp} }^{-1}
\le\pare{\frac e{e+1}}^{n^\alpha}
\le\exp\pare{-\frac{n^\alpha}{e+1}}.
\]
Stated differently, each excursion $\delta_k$ makes from a given $0$ value has length at most $n^\alpha$ with probability at least $1-e^{-n^\alpha/(e+1)}$. By applying a union bound over starting points of the excursion, we have that the maximum of the $\delta_k$ (and therefore $\delta_n$) is bounded by $n^\alpha$ with probability $ne^{-n^\alpha/(e+1)}$. Finally, $\delta_n\le n^\alpha$ is equivalent to $m_n=\pi_n\ge n-n^\alpha$.
\end{proof}

\begin{theorem}\label{a11}
Let $Q$ be as in \Cref{a18} for $\alpha\in(0,1)$ and $P\sim\RandPP_p(Q)$. Then, for $PQ=LU$, we have
\[\Pr\pare{\maxnorm U\ge\exp\pare{\frac{n^{1-\alpha}-1-n^{-\alpha}}p}}\ge1-ne^{-n^\alpha/(e+1)}.\]
In particular, with $\alpha=(\log 8 +\log\log n)/\log n$, we have
\[\Pr\pare{\maxnorm U\ge \exp \pare{\frac{n-16 \log n}{8 p \log n} } }\ge 1 - n^{1-8/(e+1)} .\]
\end{theorem}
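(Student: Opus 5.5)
On the event of \Cref{a19}, I will show directly that the final pivot $U_{nn}$ is already large. The last diagonal entry of $U$ is a ratio of determinants:
\[
|U_{nn}| = \abs{\frac{\det Q}{\det Q[S,[n-1]]}}, \qquad S=[n]-\set{\pi_n}.
\]
I will transfer this ratio to $N$ using the same upper-triangular invariance as in the proof of \Cref{a19}. Since $Q=NR^{-1}$, we have
\[
\det Q=\frac{\det N}{\det R}, \qquad \det Q[S,[n-1]]=\frac{\det N[S,[n-1]]}{\det R[[n-1],[n-1]]}.
\]
Dividing gives
\[
|U_{nn}|=\frac{|\det N|}{|\det N[S,[n-1]]|\,|R_{nn}|}.
\]

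Both determinants of $N$ are explicit. Since $N$ is lower triangular with diagonal $z$, $\det N=z^n$. The block structure computed in the proof of \Cref{a19}, with $k=n-1$ and $m_n=\pi_n$, gives $\det N[S,[n-1]]=z^{\pi_n-1}$. For $R_{nn}$, I use that it equals the distance from the last column $z\bm e_n$ of $N$ to the span of the first $n-1$ columns. That span is the orthogonal complement of the vector $\bm v$ with $v_j=(-z)^{j-1}$; one checks directly that $\bm v^\top N\bm e_j=0$ for $j<n$. Hence
\[
|R_{nn}|=\frac{|\langle z\bm e_n,\bm v\rangle|}{\norm{\bm v}}=\frac{z^n}{\norm{\bm v}}.
\]
Combining these three facts,
\[
|U_{nn}|=\norm{\bm v}\,z^{-(\pi_n-1)}\ge z^{-(\pi_n-1)},
\]
using $\norm{\bm v}\ge|v_1|=1$.

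On the event $\pi_n\ge n-n^\alpha$, which by \Cref{a19} has probability at least $1-ne^{-n^\alpha/(e+1)}$, and since $z<1$, this gives
\[
\maxnorm U\ge|U_{nn}|\ge z^{-(n-n^\alpha-1)}=\exp\pare{\frac{n-n^\alpha-1}{pn^\alpha}}=\exp\pare{\frac{n^{1-\alpha}-1-n^{-\alpha}}{p}},
\]
which is the first claim. The only step needing real care is the identification of $R_{nn}$ through the orthogonal vector $\bm v$ and the bookkeeping of signs and powers of $z$; everything else is already supplied by \Cref{a19}.

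For the second claim, substitute $n^\alpha=8\log n$, which is what $\alpha=(\log 8+\log\log n)/\log n$ means. The failure probability becomes
\[
ne^{-8\log n/(e+1)}=n^{1-8/(e+1)}.
\]
The exponent becomes
\[
\frac{n^{1-\alpha}-1-n^{-\alpha}}{p}=\frac{n/(8\log n)-1-1/(8\log n)}{p}\ge\frac{n-16\log n}{8p\log n},
\]
where the last inequality holds because $1+1/(8\log n)\le 2$ for $n\ge2$. This is a routine check.
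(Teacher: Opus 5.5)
Your proof is correct. It reaches the same identity the paper obtains implicitly, $|U_{nn}| = \norm{\bm v}\, z^{-(\pi_n-1)}$, but you compute $U_{nn}$ by a different route.

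The paper uses orthogonality of $PQ$ directly. Since $U^{-1}$ is upper triangular and $L^{-1}$ is unit lower triangular, $U_{nn}^{-1} = ((PQ)^{-1})_{nn} = ((PQ)^\top)_{nn} = Q_{\pi_n,n}$. Moreover $Q\bm e_n = \pm\bm v/\norm{\bm v}$, because it is the unit vector orthogonal to the first $n-1$ columns of $N$. No minors and no $R_{nn}$ are needed.

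You instead write $U_{nn}$ as a ratio of minors and push it through $Q = NR^{-1}$. This amounts to observing that the $U$-factor of $PQ$ is the $U$-factor of $PN$ times $R^{-1}$. So $|U_{nn}|$ is the last pivot of $PN$, namely $z^{n-\pi_n+1}$ from the block determinant already computed in \Cref{a19}, divided by $|R_{nn}| = z^n/\norm{\bm v}$.

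The paper's route is shorter but relies essentially on $Q^{-1}=Q^\top$. Yours reuses exactly the triangular-invariance and block-determinant machinery of \Cref{a19}, and needs orthogonality only for the Gram--Schmidt interpretation of $R_{nn}$. It also makes clear that the growth comes entirely from dividing by the tiny $R_{nn}$, i.e.\ from the ill-conditioning of $N$, since the last pivot of $PN$ is itself at most $1$. The verification of the second claim is the same in both.
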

\begin{proof}
Because $PQ$ is orthogonal, we have for
$PQ=LU$ that 
\[U_{nn}^{-1}=(U^{-1})_{nn} = ((PQ)^{-1})_{nn}=((PQ)^\top)_{nn}=(PQ)_{nn},\] 
which is the $\pi_n\toth$ entry of the last column of $Q$. By construction $Q \bm e_n$ is the unique unit vector (up to sign) orthogonal to the leading $n-1$ columns of $N$, and so must be proportional to
\[Q \bm e_n\propto\pmat{1&-z&(-z)^2&\cdots&(-z)^{n-1}}^\top.\]
Note that this vector has norm at least one. When $P\sim\RandPP_p(Q)$, \Cref{a19} says that $\pi_n$ is at least $n-n^\alpha$ with probability $1-ne^{-n^\alpha/(e+1)}$, and so the $\pi_n\toth$ entry of $Q \bm e_n$ is at most $z^{n-n^\alpha-1}$ in absolute value. Therefore, 
\[
\maxnorm U
\ge \abs{U_{nn}}
\ge z^{-(n-n^\alpha-1)}
 =\exp\pare{\frac{n^{1-\alpha}-1-n^{-\alpha}}p}\]
with probability $1-ne^{-n^\alpha/(e+1)}$.
\end{proof}

Because $PQ$ is orthogonal, $\maxnorm{PQ}\le1$, and so $\growth(PQ)\ge \|U\|_{\max}$.

\section{Quasi-Polynomial Growth of Rook Pivoting}
Here, we construct a sequence of matrices for which Gaussian elimination with rook pivoting exhibits growth $n^{(\frac14-o(1))\log_2 n}$. This is nearly within a factor of two of the exponent in Foster's upper bound $n^{\frac{3\log2}4\log_2 n}\approx n^{0.52\log_2 n }$ \cite{b19}.
Our construction is recursive:
for each matrix $A$ exhibiting large growth under rook-pivoting, we construct a larger rook-pivoted matrix $B$ with smaller entries which reveals $A$ in the course of performing eliminations. The key strategy for constructing such a gadget is focusing on the block-nonzero pattern
\[B = \bmat{W&X\\Y&0}.\]
This way, $A$ corresponds to the factorization $A=YW^{-1}X$. We take $W$ to be a multiple of the identity matrix, so that the behavior under successive elimination steps is easy to determine explicitly. We take $X$ to be a random matrix, so that its entries (and, when $W = \alpha I$, the entries of $Y = \alpha A X^{-1}$ as well) are small via a concentration argument, thereby ensuring $B$ is rook pivoted.

Our construction in the next section uses a similar idea, and so we present the concentration results needed for this section and the next here. Both concern properties of Haar-distributed matrices, though this section only makes use of the first bound \Cref{a20} for $Y=I$.
\begin{lemma}\label{a21}
Fix any $X\in\R^{n\times p}$, $Y\in\R^{n\times q}$, and $M\in\R^{n\times n}$. Let $\delta \in (0,1)$ and $L=\log(2pq/\delta)$.
If $Q\sim\Haar(\orthog(n))$, then
\eq{\label{a20}
\Pr\pare{
\maxnorm{X^\top QY}
\le
\norm X_{1\to2}\norm Y_{1\to2}
\sqrt{\frac{2L}{n}}}\ge1-\delta
}
and
\eq{\label{a22}
\Pr\pare{\maxnorm{X^\top Q^\top MQY-\frac{\tr M}{n}X^\top Y}\le C\norm X_{1\to2}\norm Y_{1\to2}\pare{\frac{\|M\|_F \sqrt L}n+\frac{\norm ML}n}}\ge1-\delta}
for an absolute constant $C$.
\end{lemma}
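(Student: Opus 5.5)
Both bounds will be proved entrywise, followed by a union bound over the $pq$ entries. Write $\bm x_a=X\bm e_a$ and $\bm y_b=Y\bm e_b$. By homogeneity we may normalize each column, so it suffices to bound a single entry for fixed vectors $\bm x,\bm y$ with $\norm{\bm x},\norm{\bm y}\le1$, with failure probability $\delta/(pq)$. Since $\norm X_{1\to2}=\max_a\norm{\bm x_a}$, the general statement then follows from the normalized one.

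For \Cref{a20}, the entry is $\bm x^\top Q\bm y$. Since $Q\sim\Haar(\orthog(n))$, the vector $Q\bm y/\norm{\bm y}$ is uniform on the sphere $\sph^{n-1}$. Hence $\bm x^\top Q\bm y$ has the law of $\norm{\bm x}\norm{\bm y}\,u_1$, where $u$ is uniform on the sphere. We then use the standard tail bound $\Pr(|u_1|\ge t)\le 2e^{-nt^2/2}$. This can be obtained from the explicit density of $u_1$, which is proportional to $(1-s^2)^{(n-3)/2}$, together with $1-s^2\le e^{-s^2}$; alternatively, one can use Lévy's concentration on the sphere for the $1$-Lipschitz function $u\mapsto u_1$. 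Setting $t=\sqrt{2L/n}$ gives a per-entry failure probability of $2e^{-L}=\delta/(pq)$, and the union bound finishes the proof.

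For \Cref{a22}, the entry is $\bm x^\top Q^\top MQ\bm y-\frac{\tr M}{n}\bm x^\top\bm y$. We polarize so that only quadratic forms need to be controlled. Assuming unit vectors, write
\[\bm x^\top Q^\top MQ\bm y=\tfrac14\bigl[(\bm x+\bm y)^\top Q^\top MQ(\bm x+\bm y)-(\bm x-\bm y)^\top Q^\top MQ(\bm x-\bm y)\bigr],\]
and similarly for $\bm x^\top\bm y$. This reduces the problem to bounding $\bm v^\top Q^\top MQ\bm v-\frac{\tr M}{n}\norm{\bm v}^2$ for $\norm{\bm v}\le2$, at the cost of constants. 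With $w=Q\bm v/\norm{\bm v}$ uniform on the sphere, we need $|w^\top Mw-\tr M/n|\lesssim \frac{\norm M_F\sqrt L}{n}+\frac{\norm M L}{n}$ with probability at least $1-e^{-L}$ (absorbing constants). We may replace $M$ by its symmetric part $(M+M^\top)/2$, which does not increase either norm. To prove this Hanson–Wright bound on the sphere, write $w=g/\norm g$ with $g\sim N(0,I_n)$, so that
\[w^\top Mw-\frac{\tr M}{n}=\frac{g^\top Mg-\frac{\tr M}{n}\norm g^2}{\norm g^2}=\frac{g^\top\bigl(M-\frac{\tr M}{n}I\bigr)g}{\norm g^2}.\]
Let $M_0=M-\frac{\tr M}{n}I$. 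Then $\tr M_0=0$, $\norm{M_0}_F\le\norm M_F$ and $\norm{M_0}\le2\norm M$. The Gaussian Hanson–Wright inequality gives $|g^\top M_0 g|\le C(\norm{M_0}_F\sqrt L+\norm{M_0}L)$ with probability $1-e^{-L}$. Meanwhile, $\chi^2$ concentration gives $\norm g^2\ge n/2$ with probability $1-e^{-n/16}$. Dividing by $\norm g^2$ yields the claim.

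The main obstacle is this last step, specifically the event $\norm g^2\ge n/2$. Its failure probability $e^{-cn}$ may exceed $\delta/(pq)$ when $L\gtrsim n$. In that regime, however, the trivial bound $|w^\top Mw-\tr M/n|\le2\norm M\le C\norm M L/n$ already holds deterministically. So I will split into the cases $L\le cn$ and $L>cn$, using Hanson–Wright in the first case and the trivial bound in the second. Finally, a union bound over the two polarization terms and all $pq$ entries, with $C$ absorbing the constant-factor losses in $L$, completes the proof of \Cref{a22}.
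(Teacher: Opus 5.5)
Your proof of \Cref{a20} is the paper's argument. The problem is the polarization step for \Cref{a22}: the identity you display holds only when $Q^\top MQ$ is symmetric, i.e.\ when $M$ is symmetric. For general $M$ its right-hand side equals $\frac12\bigl(\bm x^\top Q^\top MQ\bm y+\bm y^\top Q^\top MQ\bm x\bigr)=\bm x^\top Q^\top M_sQ\bm y$ with $M_s=\frac12(M+M^\top)$. So the antisymmetric part $M_a=\frac12(M-M^\top)$ is silently dropped. Your later remark that one ``may replace $M$ by its symmetric part'' is legitimate for the quadratic forms $\bm v^\top Q^\top MQ\bm v$, but not for the bilinear form you started from.

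Concretely, for even $n$ take $M=J=\pmat{0&1\\-1&0}\otimes I_{n/2}$ and orthonormal $\bm x,\bm y$. The right-hand side of your identity vanishes identically, while $\bm x^\top Q^\top JQ\bm y$ is typically of order $n^{-1/2}$, the same order as the target bound $\sqrt{L/n}+L/n$. As written, you prove \Cref{a22} only for symmetric $M$. This matters: \Cref{a22} is used in \Cref{a44} with $M=I-W^\top E_kSE_k^\top=\bmat{I&W_{11}^{-1}W_{12}\\0&0}$, which is not symmetric. The paper avoids the issue by passing to the $n\times 2$ Haar frame $Z=QE$ and symmetrizing at the level of $\vvec(Z)$. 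There $\wt M=\frac12(\bm a\bm b^\top\otimes M+\bm b\bm a^\top\otimes M^\top)$ represents $\bm x^\top Q^\top MQ\bm y$ exactly, and a Stiefel Hanson--Wright inequality is then quoted.

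Your Gaussian route can be repaired without that black box. For unit $\bm x$, write $\bm y=(\bm x^\top\bm y)\bm x+\bm y_\perp$ and $\bm u=Q\bm x$, so that
\[\bm x^\top Q^\top MQ\bm y-\tfrac{\tr M}{n}\bm x^\top\bm y=(\bm x^\top\bm y)\bigl(\bm u^\top M\bm u-\tfrac{\tr M}{n}\bigr)+\bm u^\top MQ\bm y_\perp .\]
Your argument already handles the first term. For the second, conditionally on $\bm u$ the vector $Q\bm y_\perp/\norm{\bm y_\perp}$ is uniform on the unit sphere of $\bm u^\perp$. The term is therefore a linear form, and is $\lesssim\norm{\bm y_\perp}\norm{M^\top\bm u}\sqrt{L/n}$ in absolute value with probability at least $1-2e^{-L}$. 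Moreover, $\bm u\mapsto\norm{M^\top\bm u}$ is $\norm M$-Lipschitz with mean at most $\norm M_F/\sqrt n$, so spherical concentration gives $\norm{M^\top\bm u}\lesssim\norm M_F/\sqrt n+\norm M\sqrt{L/n}$. The product is of the required order $\norm M_F\sqrt L/n+\norm ML/n$.

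Equivalently, you can write the frame as $G(G^\top G)^{-1/2}$ with $G\in\R^{n\times 2}$ Gaussian and apply Gaussian Hanson--Wright to all entries of $G^\top\bigl(M-\tfrac{\tr M}{n}I\bigr)G$, including the off-diagonal decoupled chaos. With either fix, your proof becomes a more elementary alternative to the paper's. It needs only Gaussian Hanson--Wright, a $\chi^2$ lower tail, your case split for $L\gtrsim n$, and spherical concentration.
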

\begin{proof}
Consider an arbitrary pair of columns \(\bm x=X \bm e_i\) and \(\bm y=Y \bm e_j\). By rotational invariance,
\({\bm x^\top Q \bm y}/(\norm {\bm x} \norm {\bm y})\)
is distributed as the first coordinate of a vector uniformly distributed on the unit sphere \(\sph^{n-1}\), which, by a standard concentration result, satisfies
\[\Pr\pare{\frac{\abs{\bm x^\top Q \bm y}}{\norm{\bm x}\norm{\bm y}}>\frac{\gamma}{\sqrt{n}}}\le2e^{-\gamma^2/2}\]
for every \(\gamma>0\), see, e.g., \cite[Lemma 2.2]{b46}.
Taking \(\gamma =\sqrt{2L}\) and applying a union bound over the \(pq\) pairs of columns of $X$ and $Y$ gives the first result \Cref{a20}.

For the second estimate, express $\bm x$ and $\bm y$ in an orthonormal basis for their span, e.g., $\bm x=E \bm a$ and $\bm y=E \bm b$ for $E^\top E=I$ and $\col(E)=\spn\set{\bm x, \bm y}$. Then $Z=QE$ is a Haar-distributed sample from the Stiefel manifold of order at most two, and
\[\bm x^\top Q^\top MQ \bm y
=\bm a^\top Z^\top MZ \bm b
=\vvec(Z)^\top\frac{\bm a \bm b^\top\otimes M+\bm b \bm a^\top\otimes M^\top}2\vvec(Z).\]
Set $\wt M=\frac{\bm a \bm b^\top\otimes M+\bm b \bm a^\top\otimes M^\top}2$. In particular, we may apply the Stiefel Hanson-Wright inequality
\cite[Theorem 7.1(1)]{b47}, which gives
\[
\Pr\pare{ \abs{\vvec(Z)^\top\wt M\vvec(Z)-\frac1{n}\tr(\wt M)}\ge t }\le 2\exp\pare{-c\min\pare{ \frac{(n-2)^2t^2}{\smagn{\wt M}_F^2} , \frac{(n-2)t}{\smagn{\wt M}} }}\]
for $c=7/10000$. Observe that $\tr(\wt M)= \bm x^\top \bm y\tr(M)$, $\smagn{\wt M}_F\le\norm \bm x\norm \bm y\norm M_F$, and $\smagn{\wt M}\le\norm \bm x\norm \bm y\norm M$. Setting $t$ to be a sufficiently large multiple of $\norm \bm x\norm \bm y\pare{\frac{\magn M_F\sqrt L}n+\frac{\magn ML}n}$ and applying a union bound over \(pq\) pairs of columns of $X$ and $Y$ gives \Cref{a22}.
\end{proof}

\begin{theorem}\label{a12}
There is a universal constant $C$ such that for each $n\in\N$, there is a rook pivoted matrix $A\in\RP_n(\R)$ with $\growth(A)\ge n^{\frac14\log_2(n)-C\log\log n}$.
\end{theorem}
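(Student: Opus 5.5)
We build the matrices recursively with the gadget described before \Cref{a21}. Suppose $A\in\RP_m(\R)$ has growth $g$, and normalize so that $\maxnorm A=1$. We form
\[B=\bmat{\alpha I_N & X\\ Y & 0},\qquad X=\beta Q_1,\quad Y=\alpha A'X^{-1},\]
where $N\ge m$ and $A'$ is $A$ padded to be $N\times N$. One natural padding is $A'=A\oplus \varepsilon I$; alternatively, $X$ can be taken $N\times m$ with orthonormal columns and $Y=\alpha A X^\top/\beta$ of size $m\times N$. We choose the second, rectangular version, so $B$ has size $(N+m)\times(N+m)$.

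Since $W=\alpha I$, eliminating the first $N$ pivots touches no entry of the $W$ and $X$ blocks beyond the trivial ones. The intermediate Schur complements of the $Y$–$0$ block are $-YX[\text{rows }1..k]/\alpha$, and after $N$ steps the iterate is $-YX/\alpha=-A$. Gaussian elimination then continues on $-A$, which is rook pivoted, so the final $U$ contains $U_A$. Because $\maxnorm B$ is governed by $\alpha$, $\beta$ and $\alpha\beta^{-1}\maxnorm{AX^\top}$, we get $\growth(B)\ge \maxnorm{U_A}/\maxnorm B$.

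Concretely, take $X^\top$ to be the first $m$ columns of a Haar orthogonal $N\times N$ matrix, so $\maxnorm X\lesssim\sqrt{\log N/N}$. By \Cref{a20} with $Y=I$ and $\norm A_{1\to 2}\le\sqrt m$, we get $\maxnorm{AX^\top}\lesssim\sqrt{m\log N/N}$.

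The steps, in order, are as follows.

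\textbf{(i) Verify the rook condition at the first $N$ steps.} At step $k$ the pivot is $\alpha$. Its row contains the $k$-th row of $X$, of size $\beta\sqrt{\log N/N}$. Its column contains the $k$-th column of the current $Y$-block, which is $Y[:,k]$ unchanged, since $W$ is diagonal and so earlier steps do not modify later columns of $Y$. Its size is about $\alpha\beta^{-1}\sqrt{m\log N/N}$. We therefore need $\beta\sqrt{\log N/N}\le\alpha$ and $\sqrt{m\log N/N}\le\beta$. Choosing $\beta=\alpha\sqrt{N/\log N}$ makes the second condition become $\alpha\ge \sqrt{m}\log N/N$. We normalize $\alpha=1$, so $\maxnorm B\approx\sqrt{N/\log N}$. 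The $X$ block remains unchanged in the later rows, because the pivot rows only subtract from the $Y$ rows. The $Y$ rows accumulate $-YX[1..k]/\alpha$ in the last columns, but those are not checked until step $N$.

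\textbf{(ii) Handle steps after $N$.} Here the iterate is $-A$, which is rook pivoted by hypothesis.

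\textbf{(iii) Set up the growth recursion.} Scaling $B$ to max entry $1$ divides by about $\beta$, while $\maxnorm{U_A}\ge g$. This gives
\[g(N+m)\gtrsim g(m)\sqrt{N/(m\log N)}\cdot(\text{const}),\]
once we arrange $\beta\approx \sqrt{N/m}$ up to logs. With $N\approx m$ the factor is only polylogarithmic, which is useless, so the recursion must be made superpolynomial by choosing $N=m\cdot s$. Iterating $g(n)\ge g(n/s)\sqrt{s}/\mathrm{polylog}$ with a constant $s$ only gives polynomial growth.

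\textbf{(iv) Exploit the growth of $A$ better.} The right move is to let the entries of $A$ themselves be large relative to the pivots that reveal them. The final $U$ of $B$ contains $U_A$ with entries of size $g\maxnorm A$, but also intermediate Schur complements $-YX[1..k]$ that partially reveal $A$. The quasi-polynomial rate $n^{\frac14\log_2 n}$ corresponds to $g(2n)\approx g(n)\cdot n^{1/2}$, since $\log g(2^j)=\sum_i i/2\cdot\log 2$. So the gadget must double the size and gain a factor $\sqrt n$.

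This fits $N=m$ exactly when the entries of $B$ are about $m^{-1/2}$ times the entries of $A$. To get that, take $\alpha=\beta=1$ with $X$ an orthogonal Haar matrix; then $X$ and $Y=AX^\top$ have entries of order $\sqrt{\log m/m}$ times $\norm A_{1\to 2}$. We therefore need $A$ normalized so that $\norm A_{1\to2}\lesssim\maxnorm A$ up to logs, i.e.\ the column norms stay comparable to the maximum entry. We maintain this property inductively as an extra invariant: $B$'s columns have norm about $1$ and its max entry is also about $1$, for instance by scaling the $W$ block.

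The main obstacle is reconciling the rook condition at the pivots $\alpha$, with off-diagonals of size $\sqrt{\log m/m}\,\norm{A}_{1\to2}$, against this column-norm invariant while losing only $\mathrm{polylog}$ factors per level. These losses accumulate to the $n^{-C\log\log n}$ term. For values of $n$ that are not powers of two, we embed via a direct sum with an identity block, which preserves the rook pivoted property.
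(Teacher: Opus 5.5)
There is a genuine gap. The proposal never produces a recursion that closes, and the invariant you settle on in step (iv) points the wrong way. By \Cref{a21}, the quantity controlling $\maxnorm{AX^\top}$ is the row norm $\|A\|_{2\to\infty}$ (you wrote $\|A\|_{1\to2}$). You ask for it to be comparable to $\maxnorm A$ up to logs, but the gadget cannot preserve that.

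Normalize $\maxnorm A=1$ so that the revealed complement is $-A$. Rook pivoting forces every entry of $X$ and $Y$ to be at most the pivot $\alpha$, so $\maxnorm B=\alpha$ and the per-level gain is $1/\alpha$. With $X=\beta Q$ for Haar $Q$, the top rows of $B$ have norm at least $\beta$. The bottom rows of $Y=\alpha AX^{-1}$ have norms $(\alpha/\beta)\|A_{i,:}\|$, and the largest of these is at least $\alpha/\beta$. Hence $\|B\|_{2\to\infty}\ge\sqrt{\alpha}$ and $\|B\|_{2\to\infty}/\maxnorm B\ge\alpha^{-1/2}$. For a gain of order $\sqrt m$ this ratio is about $m^{1/4}$, not polylogarithmic. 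You acknowledge this ``main obstacle'' but do not resolve it. Your normalizations in (i)--(iii) are also inconsistent: with $Y=\alpha AX^\top/\beta$ and $X^\top X=I$, the revealed complement is $-A/\beta$, not $-A$.

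The missing idea is the opposite invariant. It is only a $\sqrt{\log}$ improvement on the trivial bound $\|A\|_{2\to\infty}\le\sqrt m\,\maxnorm A$, and it was exactly this trivial bound that made $N\approx m$ look useless in your step (iii). The invariant is: keep row norms bounded by an absolute constant while the pivot and all entries are $O(\sqrt{\log m/m})$.

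The paper takes exactly the $N=m=2^k$ case of your steps (i)--(ii), but with a small pivot instead of one normalized to $1$:
\[X_{k+1}=\begin{bmatrix}s_kI&Q_k^\top\\-X_kQ_k&0\end{bmatrix},\qquad s_k=4\sqrt{k/2^k}\,\max_{j<k}s_j .\]
Right multiplication by $Q_k$ preserves row norms, and $Q_k^\top$ has unit rows. So $\|X_{k+1}\|_{2\to\infty}^2\le\max(\|X_k\|_{2\to\infty}^2,s_k^2+1)$, which stays bounded because $C:=\sup_k s_k<\infty$. \Cref{a21} then gives $\maxnorm{X_kQ_k}\le s_k$ and $\maxnorm{Q_k}\le s_k$ with positive probability. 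This is exactly the rook condition for the first $2^k$ steps, after which the residual is $X_k/s_k$.

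Therefore $\maxnorm{X_{K+1}}\le\sqrt2\,C$, while the last pivot is $(s_K\cdots s_0)^{-1}\ge 2^{K(K+1)/4}/\bigl((4C)^K\sqrt{K!}\bigr)$. With $n=2^{K+1}$, this gives $n^{\frac14\log_2 n-C'\log\log n}$.
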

\begin{proof}
It suffices to prove the theorem statement when $n$ is a power of two by considering the matrix $\maxnorm{A}I_{n-2^{\floor{\log_2 n}}}\oplus A$ where $A\in\RP_{2^{\floor{\log_2(n)}}}(\R)$.
Let $Q_0=\bmat1$, $Q_1=I_2$, and $Q_k\sim\Haar(\orthog(2^k))$ be independent random Haar orthogonal matrices for $k\ge2$. Define $s_k\in\R_+$ recursively by $s_0=1$ and
\[s_k=4\sqrt{\frac k{2^k}}\max(s_j:j<k).\] 
Define $X_k\in\Mat_{2^k}(\R)$ recursively by $X_0=\bmat1$ and
\[X_{k+1}=\bmat{s_kI&Q_k^\top \\ -X_kQ_k&0}.\]
Our construction is simply the matrix $X_{K+1}$ for $K=\log_2(n)-1$, conditioned on a particular positive-probability event. Our analysis is in two parts: showing that $X_{K+1}$ has the desired growth and that $X_{K+1}$ is rook-pivoted with positive probability. Both parts require control on the row norms of $X_k$, i.e., the matrix norm $\magn{X_k}_{2\to\infty}$. To this end, note by definition that
\[\magn{X_{k+1}}_{2\to\infty}^2
\le\max\pare{\magn{X_kQ_k}_{2\to\infty}^2,s_k^2+1}
\le\max\pare{\magn{X_k}_{2\to\infty}^2,2\max(s_j:j\le k)^2}\]
so by induction,
\[\magn{X_k}_{2\to\infty}\le\sqrt2\max(s_j:j<k).\]
Furthermore, $s_k$ is eventually decreasing so $\sup_{k\in\N}s_k=:C$ is finite. We therefore have \[\maxnorm{X_{K+1}}\le\norm{X_{K+1}}_{2\to\infty}\le\sqrt2C.\]
On the other hand, observe that applying $2^k$ steps of Gaussian elimination to $X_{k+1}$ results in $\frac1{s_k}X_k$, and thus Gaussian elimination on $X_{K+1}$ eventually results in the scalar $(s_Ks_{K-1}\cdots s_1s_0)^{-1}$. Additionally, $s_k\le4C\sqrt{\frac k{2^k}}$ so
\[
\growth(X_{K+1})
\ge\frac{(s_Ks_{K-1}\cdots s_1s_0)^{-1}}{\maxnorm{X_{K+1}}}
\ge\frac{ 2^{\frac14(K^2+K)} }{ (4C)^{K+2}\sqrt{K!} }
\ge2^{\frac14K^2-(K+2)\log_2(4CK)},\]
which establishes our growth claim. We now turn our attention to pivoting.
Let $\mathcal E_k$ be the event that ($\maxnorm{X_kQ_k}\le s_k$ and $\maxnorm{Q_k}\le s_k$). Observe that the intersection $\mathcal E_0\cap\cdots\cap\mathcal E_{K}$ implies that $X_{K+1}$ is rook-pivoted. $\mathcal E_0$ and $\mathcal E_1$ occur by selection of $Q_0=\bmat1$, $Q_1=I_2$ and $s_0=1$, $s_1=4/\sqrt2$.
For each $k\ge2$, conditioned on the values of $Q_1,\ldots,Q_{k-1}$, the matrix $X_k$ is a fixed matrix with $\norm{X_k}_{2\to\infty}=\norm{X_k^\top}_{1\to2}\le\sqrt2\max(s_j:j<k)$. \Cref{a21} therefore gives
\spliteq{}{
\Pr\pare{ \maxnorm{X_kQ_k}\ge s_k }&
=\Pr\pare{ \maxnorm{X_kQ_k}\ge \sqrt{\frac{8k}{2^k}}\cdot\sqrt2\max(s_j:j<k) }
<\frac12,\\
\Pr\pare{ \maxnorm{Q_k}\ge s_k }&
\le\Pr\pare{ \maxnorm{Q_k}\ge 4\sqrt{\frac{k}{2^k}} }
<\frac12,
}
where the second inequality uses the trivial $\max(s_j:j<k)\ge1$. By a union bound, the probability of either event occurring is strictly less than 1.
Finally, $\Pr(\mathcal E_K\cap\cdots\cap\mathcal E_0)=\prod_{k=1}^K \Pr(\mathcal E_k\,|\,\mathcal E_{k-1}\cap\cdots\cap\mathcal E_0)>0$, which establishes $X_{K+1}\in\RP_{2^{K+1}}(\R)=\RP_n(\R)$ with positive probability.
\end{proof}

\section{Quasi-Polynomial Growth of Complete Pivoting}
Here, we produce a sequence of completely pivoted matrices with quasi-polynomial growth factor. Combined with the quasi-polynomial upper bound of approximately $n^{0.207 \log n}$ \cite{b5}, our result settles the asymptotic behavior of the growth factor under complete pivoting. Our lower bound has exponent $\frac1{16} \log_2 n$, which is nearly within a factor of two of the upper bound $0.207 \log n \approx 0.141 \log_2 n$. Our construction is similar in spirit to that of \Cref{a12}, with a few keys differences. For complete pivoting, we must bound the magnitude of all entries at each step, which requires a top-left block with sufficiently large pivot sizes (\Cref{a23}). In addition, to build a large growth factor, we require control on the norm of the input matrix, and, therefore, control on the norm of the output matrix as well. This is achieved by a block $4\times 4$ structure (\Cref{a24}) whose norm is well-controlled (\Cref{a25}).

First, we argue the existence of an orthogonal matrix, determined in \Cref{a23}, with pivots of sufficiently large magnitude under complete pivoting via a probabilistic argument. \Cref{a26,a27,a28} are preparation for this. Then, given this matrix, we define our recursive structure, \Cref{a24}.

\begin{lemma}[{\cite[Theorem 1.3]{b48}}]\label{a26}
If $G$ is an $n \times n$ Gaussian matrix, then
\[\Pr\pare{\norm{G^{-1}}_F \le \min\set{n/t,\sqrt{n/t}}} \le 2 e^{-c t^2}.\]
for all $t>0$ and some absolute constant $c$.
\end{lemma}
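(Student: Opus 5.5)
This lemma is quoted from the literature, so I would not reprove it from scratch. Still, here is the argument I would give. It splits into the two regimes of $\min\set{n/t,\sqrt{n/t}}$: the branch $n/t$ is active for $t\ge n$, and the branch $\sqrt{n/t}$ for $t\le n$. Since $c$ may be shrunk at will, I would assume $t$ is larger than a fixed constant; otherwise $2e^{-ct^2}\ge 1$ and there is nothing to prove.

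\emph{Regime $t\ge n$.} Here I would use the crude bound $\norm{G^{-1}}_F^2=\sum_i s_i(G)^{-2}\ge n\,s_1(G)^{-2}$, together with $s_1(G)\le\norm G_F$. This gives $\norm{G^{-1}}_F\ge n/\norm G_F$. Hence the event $\norm{G^{-1}}_F\le n/t$ forces $\norm G_F\ge t$. Now $\norm G_F^2$ is $\chi^2$ with $n^2$ degrees of freedom, so by Gaussian concentration of the $1$-Lipschitz function $\norm G_F$ around its mean ($\le n\le t$), we get $\Pr(\norm G_F\ge 2t)\le e^{-t^2/2}$. Absorbing the factor $2$ into $c$ (replace $t$ by $t/2$) finishes this regime.

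\emph{Regime $t\le n$.} The idea is that $\norm{G^{-1}}_F^2\le n/t$ forces the roughly $t$ smallest singular values of $G$ to be unusually large. For $k\le n$, keeping only the $k$ smallest singular values gives
\[\norm{G^{-1}}_F^2\ge k\,s_{n-k+1}(G)^{-2}.\]
So it suffices to prove an intermediate singular value estimate of the form
\[\Pr\pare{s_{n-k+1}(G)\ge C_0\,k/\sqrt n}\le 2e^{-c'k^2}.\]
Given this, take $k\approx t/C_0^2$. Outside the bad event, $\norm{G^{-1}}_F^2\ge k n/(C_0^2k^2)=n/(C_0^2k)\gtrsim n/t$, which yields the claim after adjusting constants.

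To prove the intermediate estimate, I would use the Courant--Fischer formula
\[s_{n-k+1}(G)=\min_{\dim V=k}\max_{v\in V,\ \norm v=1}\norm{Gv}.\]
I would choose $V$ adapted to a block of $k$ columns. Project these columns onto the orthogonal complement of the span of the remaining $n-k$ columns; by rotation invariance this gives an independent $k\times k$ Gaussian matrix $W$, and least-squares vectors in $V$ satisfy $\norm{Gv}=\norm{Wx}$. This alone only yields $s_{n-k+1}(G)\lesssim\sqrt k$, which is far too weak. The correct scaling $k/\sqrt n$ must come from averaging over many blocks, equivalently from the negative second moment identity
\[\norm{G^{-1}}_F^2=\sum_{i=1}^n \mathrm{dist}(\bm g_i,H_i)^{-2},\]
where $\bm g_i$ is the $i$th column and $H_i$ the span of the others. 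Each distance is distributed as $|N(0,1)|$. Writing $N_s=\#\set{i:\mathrm{dist}(\bm g_i,H_i)\le s}$, the event $\norm{G^{-1}}_F^2\le n/t$ forces $N_s\le ns^2/t$. On the other hand $\E N_s\asymp ns$, so with $s\asymp 1/t$ the event requires $N_s$ to fall well below its mean $\asymp n/t$.

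\emph{Main obstacle.} The hard step is a lower-deviation bound for $N_s$ with Gaussian-in-$t$ decay, because the distances are dependent. My first attempt would split the columns into about $t$ disjoint groups. Conditioned on all columns outside a group, the distances within that group are controlled by independent Gaussian projections. I would then apply a Chernoff bound across groups, aiming for failure probability $e^{-ct^2}$ from the roughly $t$ groups, each contributing a factor $e^{-ct}$. If the dependence cannot be tamed this way, I would fall back on the known sharp tail estimates for the smallest singular values of Gaussian matrices (Edelman, Szarek), which give exactly the bound $\Pr(s_{n-k+1}\ge C_0k/\sqrt n)\le e^{-c'k^2}$ needed above.
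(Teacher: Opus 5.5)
The paper does not prove this lemma; it imports it verbatim from \cite[Theorem 1.3]{b48}. Your route is different. For $t\le n$ you reduce the Frobenius-norm small-ball bound to an upper tail for the $k$-th smallest singular value with $k\asymp t$, via $\norm{G^{-1}}_F^2\ge k\,s_{n-k+1}(G)^{-2}$ and $k=\lfloor t/C_0^2\rfloor$. That reduction is correct, and it explains the exponent $t^2$ as $k^2$.

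The regime $t\ge n$ needs two repairs.
First, the inequality $\norm{G^{-1}}_F\ge n/\norm G_F$ is true, but it does not follow from $\norm{G^{-1}}_F^2\ge n\,s_1^{-2}$ and $s_1\le\norm G_F$. Those only give $\sqrt n/\norm G_F$, which is useless here. Use $n=\tr(G^{-1}G)\le\norm{G^{-1}}_F\norm G_F$ instead.
Second, the concentration step only handles $t\ge 2n$. Since $\E\norm G_F=n-O(1/n)$ and $\norm G_F$ has $O(1)$ fluctuations, $\Pr(\norm G_F\ge t)$ is about $1/2$ at $t=n$. So ``replacing $t$ by $t/2$'' is legitimate only when $t/2\ge n$. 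The window $n\le t<2n$ must go to the other regime, using $n/t\le\sqrt{n/t}$ there and $k\le 2n/C_0^2$.

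The real gap is the intermediate estimate itself. Nothing in your sketch proves it, so the whole argument rests on a citation.
\begin{itemize}
\item Courant--Fischer with one block is too weak, as you say.
\item The counting plan at scale $s\asymp 1/t$ cannot give Gaussian decay. A Chernoff bound built on (conditional) independence cannot beat the independent-case value $\Pr(N_s=0)\approx e^{-\E N_s}=e^{-\Theta(n/t)}$. This is much larger than $e^{-ct^2}$ once $t\gg n^{1/3}$.
\item The within-group comparison $\mathrm{dist}(\bm g_i,H_i)\le\norm{P\bm g_i}$ replaces each distance by a $\chi_m$ variable of size about $\sqrt m$. Such a variable essentially never falls below $1/t$, so the comparison carries no information at that scale.
\end{itemize}

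When you fall back on the literature, be careful about the range. As you state it (any $k\le n$), the estimate is false. For $k=n$ it would say $\Pr(s_1(G)\ge C_0\sqrt n)\le 2e^{-c'n^2}$, whereas $s_1(G)\ge|g_{11}|$ and $\Pr(|g_{11}|\ge C_0\sqrt n)\ge e^{-C_0^2n}$ for large $n$. You only need $k\le 2n/C_0^2$. Over that range you must cite a result giving $e^{-c'k^2}$ uniformly in $n$ and $k$; Edelman's theorem covers only $k=1$.

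With such a citation and the two fixes above, your argument is a valid alternative derivation from a stronger (hard-edge rigidity) input. Without it, the proposal does not establish the lemma.
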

\begin{lemma}[{\cite[Lemma 4.1]{b49}}]\label{a27} Let $n \ge m \ge 2$. If $G$ is an $n\times m$ Gaussian matrix, then
\[\Pr\pare{\sigma_m(G)\le t\sqrt n}\le\frac{(tn)^{n-m+1}}{\Gamma(n-m+2)}\]
for all $t>0$.
\end{lemma}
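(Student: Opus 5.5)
This is a cited result, but I would reprove it through the distribution of the smallest eigenvalue of the Wishart matrix $W=G^\top G$, since $\sigma_m(G)^2=\lambda_{\min}(W)$. I would not use the elementary route via column distances. Writing $\sigma_m(G)\ge m^{-1/2}\min_i \operatorname{dist}(G\bm e_i,\spn\{G\bm e_j\}_{j\ne i})$ and union-bounding over the $m$ columns, each distance being $\chi$ with $k:=n-m+1$ degrees of freedom, loses a factor of $m$. It also gives only a $\Gamma(k/2+1)$ in the denominator instead of $\Gamma(k+1)$, so it cannot reach the stated form for large $k$. The eigenvalue route avoids both losses.

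\textbf{Step 1: density bound.} I would invoke Edelman's bound on the density of $\lambda_{\min}$ for a real $n\times m$ Gaussian matrix:
\[f_{\lambda_{\min}}(\lambda)\le L_{n,m}\,\lambda^{(k-2)/2}e^{-\lambda/2},\qquad L_{n,m}=\frac{2^{(k-2)/2}\,\Gamma(\frac{n+1}2)}{\Gamma(\frac m2)\,\Gamma(k)}.\]
It comes from integrating the joint eigenvalue density of the real Wishart ensemble over the other $m-1$ eigenvalues. The crude bound $\prod_{j}(\lambda_j-\lambda)\le\prod_j\lambda_j$ reduces that integral to an $(n-1)\times(m-1)$ Wishart normalizing constant.

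\textbf{Step 2: integrate.} Drop the factor $e^{-\lambda/2}\le1$ and integrate from $0$ to $t^2n$:
\[\Pr\pare{\sigma_m(G)\le t\sqrt n}\le L_{n,m}\cdot\frac2k\,(t^2n)^{k/2}.\]

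\textbf{Step 3: compare with the target.} It remains to check that
\[\frac{2L_{n,m}}{k}\,(t\sqrt n)^k\le\frac{(tn)^k}{\Gamma(k+1)}.\]
Since $\Gamma(k+1)=k\Gamma(k)$, this is equivalent to
\[2^{k/2}\,\frac{\Gamma(\frac{n+1}{2})}{\Gamma(\frac m2)}\le n^{k/2}.\]
Because $\frac{n+1}2=\frac m2+\frac k2$, the left side is a ratio $\Gamma(x+s)/\Gamma(x)$ with $x=\frac m2$ and $s=\frac k2$. A Gautschi/Wendel-type inequality bounds it by roughly $(x+s)^{s}$, giving approximately $(n+1)^{k/2}$.

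\textbf{Main obstacle.} The crude estimate in Step 3 yields $(n+1)^{k/2}$, which overshoots $n^{k/2}$ slightly. Closing this gap is the step I expect to be hardest. I would first use the sharper form $\Gamma(x+s)/\Gamma(x)\le x(x+s)^{s-1}$, valid for $s\ge1$. For the remaining small cases, $k=1$ (so $s=\frac12$) and small $m$, I would recover the missing factor from the discarded $e^{-\lambda/2}$ term or from $\frac{2}{k}$, and use $m\ge2$. If none of this closes the gap, I would settle for an explicit constant and note that it is absorbed downstream. That is harmless here, since \Cref{a27} is only used with $t$ small. The structural content, the exponent $n-m+1$ and the $1/\Gamma(n-m+2)$ prefactor, comes directly from Steps 1 and 2.
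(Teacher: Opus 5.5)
Your route is correct, and it is the standard derivation of exactly this bound. The paper gives no proof of its own and only cites the result. The argument is Edelman's density estimate for $\lambda_{\min}(G^\top G)$, integrated over $[0,t^2n]$, followed by a Gamma-ratio estimate, and Steps 1--3 are set up correctly. Two corrections are needed.

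First, a small slip in Step 1. After bounding $\lambda_j-\lambda\le\lambda_j$, each remaining eigenvalue carries the exponent $\frac{n-m+1}{2}=\frac{(n+1)-(m-1)-1}{2}$. So the reduced integral is the normalizing constant of the $(n+1)\times(m-1)$ real Wishart ensemble, not the $(n-1)\times(m-1)$ one. With this ensemble, the ratio of normalizing constants is exactly your $L_{n,m}$.

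Second, the step you flag as the main obstacle closes with no loss, so you should drop the fallbacks. Recovering a factor from $e^{-\lambda/2}$ cannot work uniformly as $t\to0$. Settling for an explicit constant would not prove the lemma as stated.

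For $k\ge2$, your sharper bound $\Gamma(x+s)\le x(x+s)^{s-1}\Gamma(x)$ holds by log-convexity of $\Gamma$. It gives
\[
2^{k/2}\frac{\Gamma(\frac{n+1}{2})}{\Gamma(\frac m2)}\le 2^{k/2}\cdot\frac m2\Big(\frac{n+1}{2}\Big)^{\frac k2-1}=m(n+1)^{\frac k2-1}.
\]
Since $m=n-k+1$,
\[
\frac{m(n+1)^{k/2-1}}{n^{k/2}}=\Big(1-\frac{k-1}{n}\Big)\Big(1+\frac1n\Big)^{\frac k2-1}\le e^{-\frac{k-1}{n}+\frac{k-2}{2n}}=e^{-\frac{k}{2n}}\le 1.
\]

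For $k=1$, i.e. $m=n$, use Wendel's inequality for $s\in(0,1)$, which reads $\Gamma(x+s)\le x^{s}\,\Gamma(x)$. This is sharper than the $(x+s)^s$ you recalled. At $x=\frac n2$, $s=\frac12$ it gives $\sqrt2\,\Gamma(\frac{n+1}{2})/\Gamma(\frac n2)\le\sqrt n$, which is exactly what is required.

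With these two fixes, your Steps 1--3 prove the statement verbatim, including the $1/\Gamma(n-m+2)$ prefactor, for all $t>0$.
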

\begin{lemma}\label{a29}
Let $W\sim\Haar(\orthog(n))$. Then with probability at least $1-e^{-cn}$, every $m\times m$ submatrix $X$ of $W$ with $\sqrt{n} \le m\le\frac n2$ simultaneously satisfies
\[\magn{X^{-1}}_F^2\ge cm \sqrt{n}\]
for an absolute constant $c$.
\end{lemma}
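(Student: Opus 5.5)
The plan is to reduce to Gaussian matrices, prove a per-submatrix tail bound of order $e^{-Cn}$ with $C$ large, and then take a union bound over all submatrices.

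\textbf{Step 1 (reduction to Gaussian matrices).} Fix a row set $S$ and a column set $T$, with $|S|=|T|=m$ and $\sqrt n\le m\le n/2$. Let $G$ be an $n\times m$ Gaussian matrix. By Haar invariance, $W[:,T]$ has the same distribution as $G(G^\top G)^{-1/2}$. Hence $X=W[S,T]$ has the same distribution as $G[S,:](G^\top G)^{-1/2}$. When $X$ is invertible,
\[
X^{-1}=(G^\top G)^{1/2}\,G[S,:]^{-1},
\qquad\text{so}\qquad
\magn{X^{-1}}_F\ \ge\ \sigma_m(G)\,\magn{G[S,:]^{-1}}_F .
\]
The argument only needs the distribution of each fixed $(S,T)$ submatrix, because we finish with a union bound. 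So it suffices to show that for each fixed $(S,T)$ the failure probability is at most $e^{-C n}$, where $C$ is large enough to absorb the at most $\binom{n}{m}^2\le 4^n$ choices of $(S,T)$ for each $m$, and then the $n$ choices of $m$.

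\textbf{Step 2 (smallest singular value of the tall factor).} Apply \Cref{a27} to the $n\times m$ matrix $G$ with $n-m+1\ge n/2$:
\[
\Pr\bigl(\sigma_m(G)\le t\sqrt n\bigr)\le \frac{(tn)^{n-m+1}}{\Gamma(n-m+2)}\le (e\,tn/(n-m+1))^{n-m+1}\le (2et)^{n/2}.
\]
Choosing $t$ a sufficiently small absolute constant $t_0$ makes this at most $e^{-Cn}$. So $\sigma_m(G)^2\ge t_0^2 n$ outside an event of probability $e^{-Cn}$.

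\textbf{Step 3 (the square block).} $G[S,:]$ is an $m\times m$ Gaussian matrix. Apply \Cref{a26} with $t=K\sqrt n$, where $K$ is a large absolute constant chosen so that $2e^{-cK^2 n}\le e^{-Cn}$. Outside this event, $\magn{G[S,:]^{-1}}_F>\min\{m/t,\sqrt{m/t}\}$, and we split into two cases.
\begin{itemize}
\item If $m\ge t$, then $\magn{G[S,:]^{-1}}_F^2\ge m/t = m/(K\sqrt n)$.
\item If $m<t$, then $\magn{G[S,:]^{-1}}_F^2\ge m^2/t^2 = m^2/(K^2 n)\ge m/(K^2\sqrt n)$, using $m\ge\sqrt n$.
\end{itemize}
In either case $\magn{G[S,:]^{-1}}_F^2\ge m/(K^2\sqrt n)$. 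Note that the lower constraint $m\ge\sqrt n$ is exactly what makes the second case work.

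\textbf{Step 4 (combine).} Combining Steps 1–3 gives
\[
\magn{X^{-1}}_F^2\ \ge\ t_0^2 n\cdot \frac{m}{K^2\sqrt n}\ =\ \frac{t_0^2}{K^2}\, m\sqrt n
\]
for each fixed $(S,T)$, with failure probability at most $2e^{-Cn}$. The union bound over at most $n\cdot 4^n$ triples $(m,S,T)$ yields total failure probability at most $e^{-cn}$ once $C>\log 4+1$. Shrinking the constant gives a single absolute $c$ serving both as the multiplicative constant and in the exponent.

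The main obstacle is the union bound over exponentially many submatrices. This forces both tail bounds to hold at rate $e^{-Cn}$ with arbitrarily large $C$. It is why $t$ in \Cref{a26} must be taken as a large multiple of $\sqrt n$, at the cost of a constant factor, and why the constraint $m\le n/2$ is needed in \Cref{a27}.
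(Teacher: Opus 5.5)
Your proposal is correct and follows the paper's proof essentially step for step. Both arguments use the same Gaussian representation giving $\|X^{-1}\|_F\ge\sigma_m(G)\,\|G[S,:]^{-1}\|_F$, \Cref{a27} with $m\le n/2$ for the tall factor, \Cref{a26} with $t$ a large constant multiple of $\sqrt n$ for the square block, and a union bound over at most $4^n$ submatrices. Your explicit case split on which term attains $\min\{m/t,\sqrt{m/t}\}$ (using $m\ge\sqrt n$ when $m<t$) is cleaner than the paper's display \eqref{a31}, which treats only one regime and whose threshold appears garbled, though the paper's final bound $m\sqrt n/(100^2C^2)$ agrees with yours.
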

\begin{proof}
We first prove the claim for a fixed \(m\times m\) submatrix \(X\),
and then take a union bound over all such submatrices. Let
\(G_1\in\R^{m\times m}\) and \(G_2\in\R^{(n-m)\times m}\) be independent
standard Gaussian matrices.  Then
\[
X\overset{d}{=}
G_1\bigl(G_1^\top G_1+G_2^\top G_2\bigr)^{-1/2},
\]
and therefore, for the appropriate coupling of $X$ and $(G_1,G_2)$,
\eq{\label{a30}
\norm{X^{-1}}_F
\ge
\sigma_m\left(\bmat{G_1\\G_2}\right)\norm{G_1^{-1}}_F.
}
Applying \Cref{a26} in dimension \(m \ge \sqrt{n}\), with
\(t=C \sqrt{n}\) for a large absolute constant $C$, gives
\eq{\label{a31}
\Pr\left(
\norm{G_1^{-1}}_F^2\le \frac{\sqrt{m}}{C n^{1/4}}
\right)
\le
2\exp\pare{-c_0 C^2 n},}
where $c_0$ is the constant from \Cref{a26}. On the other hand, \Cref{a27} applied to
\(\bmat{G_1\\G_2}\) gives
\eq{\label{a32}\Pr\pare{\sigma_m\pare{\bmat{G_1\\G_2}}\le t\sqrt n}\le\frac{(tn)^{n-m+1}}{\Gamma(n-m+2)}\le\pare{\frac{ent}{n-m+1}}^{n-m+1}\le(2et)^{n/2},}
where the last inequality used $m\le\frac n2$. Set $t=\frac1{100}$.
It follows from
\Cref{a30,a31,a32}
that
\[
\Pr\pare{
\norm{X^{-1}}_F^2<\frac{\sqrt{n} m}{100^2C^2}}
\le
2\exp\pare{-c_0C^2 n}+\pare{\frac{2e}{100}}^{n/2}.
\]
Now, choose $C$ such that $c_0 C^2> \log 4 + 1$. The total number of submatrices of an $n\times n$ matrix is at most $4^n$, so a union bound provides our desired result.
\end{proof}

\begin{lemma}\label{a28}
Let $W\in\orthog(n)$ be any orthogonal matrix.  For every $m\times m$ nonsingular submatrix $X$ of $W$, there exists a square nonsingular submatrix $Y$ of $W$ of order $\min(m,n-m)$ with\eq{\label{a33}\magn{(W/X)}_F\ge\magn{ Y^{-1} }_F.}
Moreover,\eq{\label{a34}\maxnorm{(W/X)}\ge\frac1{\sqrt{n-m}}.}
\end{lemma}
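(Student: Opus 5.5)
We reduce everything to the complementary block of $W$, using the fact that $W^{-1}=W^\top$. After permuting rows and columns (which changes neither Frobenius nor max norms of $W/X$, nor the family of square submatrices of $W$), write
\[W=\bmat{X&B\\C&D},\qquad W/X=D-CX^{-1}B,\]
where $D$ is the $(n-m)\times(n-m)$ submatrix on the complementary rows and columns.

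The first step is the block-inverse formula. Since $X$ is nonsingular, $W/X$ is invertible and $(W/X)^{-1}$ equals the bottom-right block of $W^{-1}$. Because $W^{-1}=W^\top$, that block is $D^\top$. Hence $D$ is nonsingular and
\[W/X=D^{-\top},\qquad \magn{W/X}_F=\magn{D^{-1}}_F.\]

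The second step proves \Cref{a33} by cases. If $n-m\le m$, take $Y=D$: it has order $\min(m,n-m)$ and we are done. If $n-m>m$, take $Y=X$, and compare $\magn{D^{-1}}_F$ with $\magn{X^{-1}}_F$ using the CS decomposition of the orthogonal matrix $W$ with respect to the $m+(n-m)$ block partition. It says there are singular values $\cos\theta_1,\dots,\cos\theta_m$ such that:
\begin{itemize}
\item the singular values of $X$ are exactly $\cos\theta_1,\dots,\cos\theta_m$;
\item the singular values of $D$ are the same $\cos\theta_i$ together with $n-2m$ additional ones.
\end{itemize}
Therefore
\[\magn{D^{-1}}_F^2=\magn{X^{-1}}_F^2+(n-2m)\ge\magn{X^{-1}}_F^2,\]
which gives \Cref{a33}. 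If one prefers to avoid quoting the CS decomposition, the same identity follows from $X^\top X+C^\top C=I$ and $CC^\top+DD^\top=I$. These show that the nonzero eigenvalues of $I-X^\top X=C^\top C$ and of $I-DD^\top=CC^\top$ coincide, so the eigenvalues of $DD^\top$ are those of $X^\top X$ plus $n-2m$ ones.

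The third step proves \Cref{a34}. As a submatrix of an orthogonal matrix, $D$ satisfies $\norm{D}\le1$, so every singular value of $M:=W/X=D^{-\top}$ is at least $1$. Hence $M^\top M\succeq I$, and in particular every diagonal entry of $M^\top M$ is at least $1$. That is, each column of $M$ has Euclidean norm at least $1$. A vector of length $n-m$ with norm at least $1$ has an entry of magnitude at least $1/\sqrt{n-m}$, which is \Cref{a34}.

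The main obstacle is the case $n-m>m$ in the second step, where one must make the CS-decomposition bookkeeping precise. It is routine once the two Gram identities above are written down.
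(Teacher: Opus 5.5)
Your proposal is correct and follows essentially the same route as the paper. You derive the block-inverse identity $W/X=(W_{22}^\top)^{-1}$ from $W^{-1}=W^\top$, then take $Y=W_{22}$ when $m\ge n/2$ and $Y=X$ otherwise, using that the singular values of $W_{22}$ are those of $X$ together with $n-2m$ ones. You also justify that last fact via the Gram identities, where the paper simply asserts it. The only cosmetic difference is in the max-norm bound: you argue column by column, while the paper counts $(n-m)^2\|W/X\|_{\max}^2\ge\|W/X\|_F^2\ge n-m$; both give the same conclusion.
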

\begin{proof}
Decompose \[W=\bmat{X&W_{12}\\W_{21}&W_{22}}.\]
By the Schur formula for the inverse,
\(((W^{-1})_{22})^{-1}=(W/X).\) But since $W$ is orthogonal, $W^{-1}=W^\top$, so \[(W/X)=(W_{22}^\top)^{-1}.\]
Since $W_{22}$ is the submatrix of an orthogonal matrix, all of its singular values are at most 1. Thus, all singular values of $(W/X)$ are at least 1, so $(n-m)^2\maxnorm{(W/X)}^2\ge\magn{(W/X)}_F^2\ge n-m$, establishing \Cref{a34}.

Now consider $m\ge n/2$. Since taking the transpose does not change the Frobenius norm, we may take $Y=W_{22}$ to obtain \Cref{a33} with equality. Now consider $m\le n/2$. The $n-m$ singular values of $W_{22}$ are exactly the $m$ singular values of $X$ with $n-2m$ copies of 1. In particular, $\smagn{W_{22}^{-1}}_F\ge\smagn{X^{-1}}_F$, so we may take $Y=X$.
\end{proof}

\begin{lemma}\label{a23}
    For all sufficiently large $n$, there is $W\in\orthog(n)\cap\CP_n(\R)$ such that
    \eq{\label{a35}\maxnorm W\le C\sqrt{\frac{\log n}n}.}
If $W\at j$ is the residual matrix after $j$ steps of Gaussian elimination, then
\eq{\label{a36}\mmaxnorm{W\at j}\ge c\frac j{n^{5/4}}\qquad \text{for} \quad j = 0,\ldots,n-1.}
\end{lemma}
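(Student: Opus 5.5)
Take $W_0\sim\Haar(\orthog(n))$ and let $W=PW_0Q$, where $P,Q$ are the permutations chosen by complete pivoting on $W_0$. Then $W\in\CP_n(\R)$ by construction, and $W$ is still orthogonal. The entrywise bound \Cref{a35} is a direct application of \Cref{a20} with $X=Y=I$: we have $\norm{I}_{1\to2}=1$, and taking $\delta=1/2$ gives $\maxnorm{W_0}\le\sqrt{2\log(4n^2)/n}\le C\sqrt{\log n/n}$ with probability at least $1/2$. Permuting rows and columns does not change the max norm. So it suffices to show that \Cref{a36} also holds with probability larger than $1/2$, and then intersect the two events.

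For \Cref{a36}, note that after $j$ steps the residual $W\at j$ is the Schur complement $W/X$, where $X=W[[j],[j]]$ is a $j\times j$ submatrix of $W_0$ (nonsingular almost surely). We split into three ranges of $j$.

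\emph{Case $j\ge n/2$.} By \Cref{a34}, $\mmaxnorm{W\at j}\ge 1/\sqrt{n-j}\ge 1/\sqrt n$. Since $j\le n$, this is at least $j/n^{5/4}\cdot n^{-1/4}\cdot n^{1/2}\ge j/n^{5/4}$.

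\emph{Case $j<\sqrt n$.} We can use a trivial bound. $W\at j$ has $n-j$ rows, and the Schur complement structure gives $\magn{W\at j}_F^2\ge n-j\ge n/2$ (all singular values of $W/X$ are at least $1$, as shown in the proof of \Cref{a28}). Hence $\mmaxnorm{W\at j}\ge \sqrt{n-j}/(n-j)\ge 1/\sqrt n\ge j/n^{5/4}$. In fact \Cref{a34} already gives $1/\sqrt{n-j}\ge 1/\sqrt n$ for every $j$, so this bound covers all $j\le n^{3/4}$. It therefore only remains to treat $n^{3/4}<j<n/2$, which is comfortably inside $\sqrt n\le j\le n/2$.

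\emph{Case $\sqrt n\le j\le n/2$.} This is the main step. By \Cref{a28}, $\magn{W\at j}_F\ge\magn{Y^{-1}}_F$ for a square submatrix $Y$ of $W_0$ of order $\min(j,n-j)=j$. By \Cref{a29}, with probability at least $1-e^{-cn}$, every such submatrix simultaneously satisfies $\magn{Y^{-1}}_F^2\ge cj\sqrt n$. Uniformity over all submatrices is essential here, because $X$ and $Y$ are selected by the (data-dependent) pivoting. On this event, $\magn{W\at j}_F^2\ge cj\sqrt n$. Since $W\at j$ has at most $n^2$ entries, $\mmaxnorm{W\at j}^2\ge cj\sqrt n/n^2$, i.e.\ $\mmaxnorm{W\at j}\ge \sqrt{c}\,\sqrt{j}\,n^{-3/4}$. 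Because $j\le n$, we have $\sqrt j\,n^{-3/4}\ge j\,n^{-5/4}$, which gives \Cref{a36} in this range.

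Finally, intersect the event of \Cref{a29} (probability $1-e^{-cn}$) with the event from \Cref{a20} (probability at least $1/2$). For large $n$ the intersection has positive probability, so a suitable $W$ exists.

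The main obstacle is the middle range. It requires the uniform lower bound of \Cref{a29} over all submatrices of dimension between $\sqrt n$ and $n/2$, and converting a Frobenius-norm bound into a max-norm bound loses a factor of $n$. That loss is exactly why the exponent in \Cref{a36} is $n^{5/4}$.
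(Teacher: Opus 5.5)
Your setup, the proof of \Cref{a35}, the range $j\le n^{3/4}$, and the range $\sqrt n\le j\le n/2$ are all correct and follow the paper's route. There is, however, a genuine gap in your case $j\ge n/2$. There \Cref{a34} gives only $\mmaxnorm{W\at j}\ge 1/\sqrt{n-j}$, which is of order $n^{-1/2}$ whenever $n-j$ is a constant fraction of $n$. The target $c\,j/n^{5/4}$ is at least $\tfrac c2\, n^{-1/4}$ throughout this range, and no absolute $c$ closes a gap of $n^{1/4}$. Your chain $1/\sqrt n\ge j/n^{5/4}\cdot n^{-1/4}\cdot n^{1/2}=j/n$ is false as soon as $j>\sqrt n$. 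Your own later remark that $1/\sqrt n\ge j/n^{5/4}$ holds exactly when $j\le n^{3/4}$ already shows this. \Cref{a34} alone suffices only when $n-j\lesssim\sqrt n$.

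The missing step is to use the uniform submatrix bound on the upper range as well. Take $n/2\le j\le n-\sqrt n$. Then \Cref{a33} produces a submatrix $Y$ of order $n-j\in[\sqrt n,n/2]$. On the event of \Cref{a29} this gives $\magn{W\at j}_F^2\ge\magn{Y^{-1}}_F^2\ge c(n-j)\sqrt n$. Now divide by the true number of entries $(n-j)^2$, not by $n^2$:
\[
\mmaxnorm{W\at j}\ \ge\ \frac{\sqrt c\, n^{1/4}}{\sqrt{n-j}}\ \ge\ \sqrt c\, n^{-1/4}\ \ge\ \sqrt c\,\frac{j}{n^{5/4}}.
\]
The crude $n^2$ count you used in the middle range would give only $\sqrt{c(n-j)}\,n^{-3/4}$. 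That is too small when, for example, $n-j=n^{0.9}$.

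For $j>n-\sqrt n$, \Cref{a34} does suffice, since $1/\sqrt{n-j}>n^{-1/4}\ge j/n^{5/4}$. With this repair your argument becomes exactly the paper's three-regime split: $j<\sqrt n$, then $\sqrt n\le j\le n-\sqrt n$ (handled via a submatrix of order $\min(j,n-j)$ in \Cref{a33}), then $j>n-\sqrt n$.
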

\begin{proof}
Let $\wh W \sim\Haar(\orthog(n))$. By \Cref{a21}, \Cref{a35} holds for $\wh W$ with high probability. By \Cref{a29}, with probability at least $1-e^{-cn}$, every square submatrix $Y$ with order in the interval $[\sqrt{n},n/2]$ satisfies
\eq{\label{a37} \|Y^{-1}\|_{F} \ge c n^{1/4} \sqrt{\dim Y}.}
Now, consider a realization $\wh W$ that satisfies both these properties, and let $P$ and $Q$ be permutation matrices such that $W = P \wh W Q \in \CP_n(\R)$. Note that $W \in \orthog(n)$ and the above bounds still hold for $\|W\|_{\max}$ and square submatrices of $W$ with order in $[\sqrt{n},n/2]$.

We break our analysis of $W^{(j)}$ into three regimes, depending on the index $j$. If $\sqrt{n} \le j \le n - \sqrt{n}$, then, by \Cref{a37} and \Cref{a33}, there exists a square submatrix $Y$ of order $\min\{j,n-j\}$ with
\[\mmaxnorm{W \at j} \ge \frac{\mmagn{W \at j}_{F}}{n-j} \ge  \frac{\magn{Y}_F}{n-j} \ge \frac{c n^{1/4}\sqrt{\min(j,n-j)}}{n-j} \ge \frac{cj}{n^{5/4}}. \]
If $j < \sqrt{n}$, then, by \Cref{a34}, 
\[\mmaxnorm{W \at j} \ge \frac1{\sqrt{n-j}} \ge \frac{1}{\sqrt{n}} \ge \frac{j}{n^{5/4}}.\]
Finally, if $j > n - \sqrt{n}$, then, again, by \Cref{a34},
\[\mmaxnorm{W \at j} \ge \frac{1}{\sqrt{n-j}} > \frac{1}{n^{1/4}} \ge \frac{j}{n^{5/4}}.\]
\end{proof}

\begin{definition}\label{a24}
For $A\in \GL_n(\R)$, let 
\[\widehat A=\frac{A}{\magn A}, \qquad \alpha = \frac{n^{1/4}}{\log n},  \qquad \beta =\frac{n^{1/4}}{\log^{3/2} n}, \qquad \gamma = \frac{n^{1/4}}{\log^2 n},\]
and note that $\alpha \gamma = \beta^2$. Let $B(A)$ denote the $4n\times 4n$ random matrix
\[B(A)=\begin{+bmatrix}[colspec=cc|cc]
I&0& \alpha Q^\top & \gamma \widehat A\\
0&I&\beta WQ^\top&0 \\
\hline
\alpha WQ&0&0&0\\
0&\beta W^\top&0&0\end{+bmatrix},\]
where $W$ is the particular orthogonal matrix of \Cref{a23} and $Q\sim\Haar(\orthog(n))$.
\end{definition}

It is useful to first record the residuals of $B(A)$ after a certain number of steps of Gaussian elimination. Let $E_k \in \R^{n \times (n-k)}$ be the trailing $n-k$ columns of the identity matrix and $P_k=I-E_kE_k^\top$ be the projection matrix onto the first $k$ coordinates. By \Cref{a0},
\eq{\label{a38}
B(A)\at k=\begin{+bmatrix}[colspec=cc|cc]
I_{n-k}&0& \alpha E_k^\top Q^\top & \gamma E_k^\top \widehat A\\
0&I_n&\beta WQ^\top&0 \\
\hline
\alpha WQE_k&0&-\alpha^2 WQP_kQ^\top&-\beta^2 WQP_k\widehat A\\
0& \beta W^\top&0&0\end{+bmatrix}\qquad \text{for} \quad 0\le k< n.
}
The $(4,3)$ block becomes non-zero only after the first identity block is completely eliminated,
\eq{\label{a39}B(A)\at{n+k}=\begin{+bmatrix}[colspec=c|cc]
I_{n-k}& \beta E_k^\top WQ^\top&0 \\
\hline
0&- \alpha^2 W&-\beta^2 WQ\widehat A\\
\beta W^\top E_k&-\beta^2 W^\top P_kWQ^\top&0\end{+bmatrix}\qquad \text{for} \quad  0\le k <n.}
After the second identity block is eliminated,
\eq{\label{a40}B(A)\at{2n}=-\begin{+bmatrix}[colspec=cc]
\alpha^2 W& \beta^2 WQ\widehat A\\
\beta^2 Q^\top&0\end{+bmatrix}.}
In both cases, we benefit from the cancellations $QQ^\top =I$ and $W^\top W=I$ once the projection $P_k$ becomes identity. After the remaining $n$ steps of Gaussian elimination, we are left with a single block 
\eq{\label{a41}B(A) \at {3n}= \gamma^2 \widehat A.}
In order for $B(A) \in \CP_{4n}(\R)$, we must have $B(A)\at{2n} \in \CP_{2n}(\R)$, $\mmaxnorm{B(A)\at k} =1$ for $k =0,\ldots,n-1$, and $\mmaxnorm{B(A)\at{n+k}} =1$ for $k = 0,\ldots,n-1$. Conditional on some properties of $A$, we show that, with high probability, this is the case.

\begin{lemma}\label{a25} For $n$ sufficiently large,
\(\alpha\le\magn{B(A)}\le 3\alpha\) surely.\end{lemma}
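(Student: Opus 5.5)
The norm $\magn{\cdot}$ here is the spectral norm, since $\wh A = A/\magn A$ is meant to have unit operator norm. The plan is to bound $\magn{B(A)}$ above by splitting $B(A)$ into a sum of simpler pieces, and below by exhibiting one block with large norm.

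\emph{Lower bound.} The norm of a matrix is at least the norm of any of its blocks. The $(3,1)$ block of $B(A)$ is $\alpha WQ$, where $W$ and $Q$ are orthogonal. This block has norm exactly $\alpha$, so $\magn{B(A)}\ge\alpha$ for every realization of $Q$.

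\emph{Upper bound.} Write $B(A)=I_{2n}\oplus 0_{2n} + N$. Here $N$ is the off-diagonal block matrix
\[N=\bmat{0 & R\\ S & 0},\qquad R=\bmat{\alpha Q^\top & \gamma\wh A\\ \beta WQ^\top & 0},\quad S=\bmat{\alpha WQ & 0\\ 0 & \beta W^\top}.\]
Then $\magn{B(A)}\le 1+\max(\magn R,\magn S)$.
\begin{itemize}
\item $S$ is block diagonal with orthogonal blocks scaled by $\alpha$ and $\beta$. Since $\beta\le\alpha$, this gives $\magn S=\alpha$.
\item For $R$, split off the two columns: $\magn R\le\magn{\bmat{\alpha Q^\top\\ \beta WQ^\top}}+\magn{\gamma\wh A}$.
\item The first term equals $\sqrt{\alpha^2+\beta^2}$, because the block column has orthogonality-scaled Gram matrix $(\alpha^2+\beta^2)I$.
\item The second term equals $\gamma$, since $\magn{\wh A}=1$.
\end{itemize}
Hence
\[\magn{B(A)}\le 1+\sqrt{\alpha^2+\beta^2}+\gamma.\]

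It remains to compare this with $3\alpha$. For $n$ sufficiently large we have $\beta=\alpha/\sqrt{\log n}\le\alpha$, $\gamma=\alpha/\log n\le\alpha/2$, and $1\le\alpha/4$, since $\alpha=n^{1/4}/\log n\to\infty$. The total is therefore at most $(\tfrac14+\sqrt2+\tfrac12)\alpha<3\alpha$.

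Both bounds hold deterministically for any orthogonal $W$ and $Q$, which justifies ``surely.'' There is no real obstacle. The only care needed is the last step, checking that the lower-order terms $1$ and $\gamma$ fit under the slack $3-\sqrt2$ once $n$ is large.
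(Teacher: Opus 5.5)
Your proof is correct. The lower bound is the same as the paper's: both use the $(3,1)$ block $\alpha WQ$. Your upper bound takes a different route.

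The paper uses a generic block-norm majorization. It replaces each block by its spectral norm and bounds $\|B(A)\|$ by the Frobenius norm of the resulting $4\times 4$ scalar matrix, i.e.\ $\|B(A)\|^2\le\sum_{i,j}\|B_{ij}\|^2=2+2\alpha^2+2\beta^2+\gamma^2$. This uses only the norms of the individual blocks and ignores how they interact.

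You instead peel off the identity quadrant and use the anti-diagonal structure to get $\|N\|=\max(\|R\|,\|S\|)$. You then exploit the cross-block orthogonality $QQ^\top=I$, $W^\top W=I$ to compute the first block column of $R$ exactly, $\bigl\|\bigl[\begin{smallmatrix}\alpha Q^\top\\ \beta WQ^\top\end{smallmatrix}\bigr]\bigr\|=\sqrt{\alpha^2+\beta^2}$.

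Each route buys something different:
\begin{itemize}
\item The paper's argument is shorter and more robust: it would survive replacing $Q$ and $W$ by arbitrary contractions.
\item Yours is sharper. It gives $\|B(A)\|\le\alpha\bigl(1+O(1/\log n)\bigr)$, which with the lower bound pins down $\|B(A)\|=(1+o(1))\alpha$. The paper's bound is only asymptotically $\sqrt2\,\alpha$.
\end{itemize}

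Only the constant $3$ is used downstream, in the recursion of Theorem~\ref{a13}. There a better constant would only affect lower-order factors, so either argument suffices.
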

\begin{proof}
For the lower bound, simply inspect the $(3,1)$ block. For the upper bound, one can replace each block of $B(A)$ with its spectral norm and take the Frobenius norm of the resulting $4\times 4$ scalar matrix,
\[\alpha=\magn{\alpha WQ}\le\magn{B(A)}\le\magn{ \pmat{1 & 0 & \alpha & \gamma \\ 0&1&\beta&0\\ \alpha&0&0&0\\0&\beta&0&0 } }_F = \pare{2+2\alpha^2 + 2 \beta^2 + \gamma^2}^{1/2} \le 3\alpha.\]
\end{proof}

In the following three lemmas, we implicitly invoke \Cref{a21}, \Cref{a20} and \Cref{a22}, order $n$ times, where $\delta$ is some sufficiently small polynomial in $n$, say $\delta=n^{-10}$, so that the stated overall probability claims hold by a simple union bound.

\begin{lemma}\label{a42}
If $\norm A\ge\frac \alpha 4$ and $\maxnorm A\le1$, then $\maxnorm{B(A)\at k}=1$ for $0\le k<n$ with probability at least $1-o(1)$.
\end{lemma}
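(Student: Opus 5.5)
The plan is to read off every block of $B(A)\at k$ from \Cref{a38} and bound each in max norm. The $(1,1)$ block $I_{n-k}$ always contains an entry equal to $1$, and so does the $(2,2)$ block $I_n$, so $\maxnorm{B(A)\at k}\ge 1$. The task is therefore to show that every other entry is at most $1$ in magnitude, with the overall failure probability $o(1)$. We treat $W$ as a fixed matrix satisfying \Cref{a35}, and let the only randomness be $Q\sim\Haar(\orthog(n))$. The blocks fall into three groups.

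\emph{Blocks that do not depend on $k$, or only through a column selection.} The $(4,2)$ block is $\beta W^\top$. By \Cref{a35}, $\maxnorm{\beta W^\top}\le \beta C\sqrt{\log n/n}=C n^{-1/4}/\log n\to 0$. The blocks $\alpha E_k^\top Q^\top$, $\alpha WQE_k$ and $\beta WQ^\top$ are submatrices of $\alpha Q^\top$, $\alpha WQ$ and $\beta WQ^\top$. We apply \Cref{a20} with unit-column matrices, namely $X=I$, $Y=I$, or $X=W^\top$. In each case $\norm{\cdot}_{1\to2}=1$, so these entries are $O(\alpha\sqrt{\log n/n})=O(n^{-1/4}/\sqrt{\log n})=o(1)$ with probability $1-n^{-8}$. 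The block $\gamma E_k^\top\widehat A$ satisfies $\maxnorm{\gamma\widehat A}\le \gamma\maxnorm A/\norm A\le 4\gamma/\alpha=4/\log n$, using both hypotheses on $A$.

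\emph{Blocks involving the projection $P_k$.} Consider the $(3,3)$ block $-\alpha^2 WQP_kQ^\top$ with entries $-\alpha^2 (W^\top e_i)^\top Q P_k Q^\top e_j$. We apply \Cref{a22} with $M=P_k$ after transposing (replace $Q$ by $Q^\top$, which is also Haar), and with $X=W^\top$, $Y=I$. Here $\tr M/n=k/n$, $\norm M_F=\sqrt k\le\sqrt n$ and $\norm M=1$. This gives
\[
\bigl|(WQP_kQ^\top)_{ij}-\tfrac kn W_{ij}\bigr|\le C\Bigl(\sqrt{\tfrac{\log n}{n}}+\tfrac{\log n}{n}\Bigr).
\]
Multiplying by $\alpha^2=\sqrt n/\log^2 n$ bounds the fluctuation by $C/\log^{3/2}n$. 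The mean term is bounded by $\alpha^2\maxnorm W\le C\sqrt n\sqrt{\log n}/(\log^2 n\sqrt n)=C/\log^{3/2}n$. Both are $o(1)$.

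The $(3,4)$ block $-\beta^2 WQP_k\widehat A$ is handled similarly. Write its entries as $(W^\top e_i)^\top Q P_k (\widehat A e_j)$. A direct use of \Cref{a20} with $Y=P_k\widehat A$ would lose too much, since $\norm{\widehat A}_{1\to 2}$ could be of order $1$. Instead we insert $QQ^\top$ and write $P_k\widehat A=P_kQ^\top(Q\widehat A)$; more simply, we bound $\norm{P_k\widehat A e_j}\le\norm{\widehat A e_j}\le\sqrt n\maxnorm A/\norm A\le 4\sqrt n/\alpha$. Then \Cref{a20} gives entries at most
\[
\beta^2\cdot \frac{4\sqrt n}{\alpha}\sqrt{\frac{2L}{n}}=O\Bigl(\frac{\beta^2\sqrt{\log n}}{\alpha}\Bigr)=O\Bigl(\frac{n^{1/4}}{\log^{3/2}n}\Bigr),
\]
which is \emph{not} small. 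This is the main obstacle. The fix is to avoid the crude column-norm bound on $\widehat A$: we apply \Cref{a20} with $X=W^\top$, viewing $QP_k$ as acting on the fixed vectors $\widehat Ae_j$, and use $\norm{\widehat A e_j}\le \norm{\widehat A}=1$, because a column norm is at most the spectral norm. With this, the entries are at most
\[
\beta^2\sqrt{\frac{2L}{n}}=O\Bigl(\frac{1}{\log^{5/2}n}\Bigr),
\]
though one must account for $P_k$. The way to handle $P_k$ is to note that $(QP_k)^\top W^\top e_i=P_kQ^\top W^\top e_i$ has norm at most $1$. The cleaner route is to write $(W^\top e_i)^\top Q P_k \widehat A e_j=\sum_{l\le k}(W^\top e_i)^\top Qe_l\,(\widehat A)_{lj}$. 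Then we bound $\max_l|(WQ)_{il}|\le C\sqrt{\log n/n}$ by \Cref{a20}, and bound the sum by Cauchy–Schwarz using $\norm{(W Q)_{i,\cdot}}=1$ together with $\norm{\widehat A e_j}\le 1$. This gives $|\cdot|\le 1$ trivially, hence $\beta^2$ is too large. So we genuinely need \Cref{a20} applied to the fixed vector $P_k\widehat Ae_j$, which has norm at most $1$, with $X=W^\top$ and $Y=P_k\widehat A$; this needs $\norm{P_k\widehat A}_{1\to2}\le\norm{\widehat A}=1$. Here $P_k\widehat A$ is deterministic and independent of $Q$, so \Cref{a20} applies for each of the $n$ values of $k$. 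This yields the bound $O(\log^{-5/2}n)$.

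Finally, we take a union bound over the $O(n)$ applications of \Cref{a21}, one for each $k$ and each block, each with $\delta=n^{-10}$. All the non-identity blocks then have entries $o(1)<1$ simultaneously with probability $1-o(1)$. Together with the entries equal to $1$ in the identity blocks, this gives $\maxnorm{B(A)\at k}=1$ for all $0\le k<n$.
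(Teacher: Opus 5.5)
Your proof is correct and is essentially the paper's argument. You bound each block of \Cref{a38} separately: \Cref{a35} handles $\beta W^\top$; \Cref{a20} with unit-column $X,Y$ handles the $\alpha$- and $\beta$-multiples of $Q$, $WQ$ and $WQ^\top$; the hypotheses on $A$ give $\gamma\|\widehat A\|_{\max}\le 4\gamma/\alpha$; \Cref{a22} with $M=P_k$ and centering $\frac kn W$ handles the $(3,3)$ block; \Cref{a20} with $X=W^\top$, $Y=P_k\widehat A$ and $\|P_k\widehat A\|_{1\to2}\le\|\widehat A\|=1$ handles the $(3,4)$ block; and a union bound over $O(n)$ applications finishes. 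One improvement: delete the detours in your $(3,4)$ paragraph. There is no obstacle there, and your opening claim that applying \Cref{a20} with $Y=P_k\widehat A$ ``would lose too much'' contradicts the argument you end up (correctly) using.
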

\begin{proof}
We start with the non-zero blocks in the lower-left quadrant,
\spliteq{}{
\mmaxnorm{\alpha WQE_k}&\le \alpha \mmaxnorm{WQ}\lesssim \alpha \sqrt{\frac{\log n}{n}} = o(1),
\\
\mmaxnorm{\beta W^\top}&\le \beta\mmaxnorm{W}\lesssim \beta \sqrt{\frac{\log n}{n}} = o(1).}
Next, consider the non-zero blocks in the top-right quadrant,
\spliteq{}{
\mmaxnorm{ \alpha E_k^\top Q^\top }
&
\le \alpha \maxnorm{ Q } \lesssim \alpha \sqrt{\frac{\log n}{n}} = o(1),\\
\mmaxnorm{ \gamma E_k^\top \widehat A }
&
\le \frac{\gamma \maxnorm{ A }}{\norm A} \le \frac{4\gamma}{\alpha} = o(1).}
Finally, consider the non-zero blocks in the lower-right quadrant,
\spliteq{}{
\mmaxnorm{\alpha^2 WQP_kQ^\top}
&\lesssim \alpha^2 \mmaxnorm{W\E QP_kQ^\top}
+\alpha^2 \mmaxnorm{ WQP_kQ^\top - W\E QP_kQ^\top }
\\
&\lesssim \frac{\alpha^2 k}{n}\sqrt{\frac{\log n}n}
+\alpha^2 \pare{ \frac{\sqrt k\sqrt{\log n}}n + \frac{\log n}n } =o(1),
\\
\maxnorm{\beta^2 WQP_k\widehat A}&\le \beta^2 \sqrt{\frac{\log n}{n}} = o(1).
}
Note that all entries are smaller in magnitude than the pivot, which is 1.
\end{proof}

\begin{lemma}\label{a43}
If $\norm A\ge\frac \alpha 4$ and $\maxnorm A\le1$, then $\maxnorm{B(A)\at {n+k}}=1$ for $0\le k<n$ with probability at least $1-o(1)$.
\end{lemma}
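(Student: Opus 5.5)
The plan mirrors the proof of \Cref{a42}, now applied block by block to the explicit form \Cref{a39} of $B(A)\at{n+k}$. The pivot at step $n+k$ is the top-left entry of the identity block $I_{n-k}$, so it equals $1$. It therefore suffices to show that every entry in the other five nonzero blocks of \Cref{a39} has magnitude at most $1$, and in fact $o(1)$, with probability $1-o(1)$. Throughout, I use $\maxnorm W\lesssim\sqrt{\log n/n}$ from \Cref{a35}, and I apply \Cref{a21} with $\delta=n^{-10}$ and a union bound over the $O(n)$ values of $k$. The randomness is only in $Q$; the matrices $W$, $A$ and $P_k$ are fixed.

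I would handle the blocks in order of difficulty. The block $\beta W^\top E_k$ is bounded by $\beta\maxnorm W\lesssim\beta\sqrt{\log n/n}=o(1)$. For $\beta E_k^\top WQ^\top$, note that $WQ^\top$ is Haar distributed, so \Cref{a20} with unit columns gives $\beta\sqrt{\log n/n}=o(1)$. The block $\alpha^2W$ has entries at most $\alpha^2\sqrt{\log n/n}=n^{1/2}(\log n)^{-2}\sqrt{\log n/n}=(\log n)^{-3/2}=o(1)$. For $\beta^2WQ\widehat A$, I apply \Cref{a20} with $X=W^\top$ and $Y=\widehat A$. Columns of $\widehat A$ have norm at most $1$ because $\magn{\widehat A}=1$, and columns of $W^\top$ have norm $1$. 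This gives $\beta^2\sqrt{\log n/n}=(\log n)^{-5/2}=o(1)$.

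The main obstacle is the block $\beta^2W^\top P_kWQ^\top$. Here the random matrix $Q$ appears only once, on the right. So I write the entries as $\bm x^\top Q^\top \bm e_j$, where $\bm x$ ranges over the columns of $W^\top P_kW$, that is, over the rows of $P_kW$ mapped by $W^\top$. Then \Cref{a20} applies with $X=W^\top P_kW$ and $Y=I$. Since $W^\top P_kW$ is an orthogonal projection, its columns have norm at most $1$, so $\norm X_{1\to2}\le1$. This yields entries of size at most $\beta^2\sqrt{2L/n}\lesssim(\log n)^{-3}\cdot\sqrt{\log n}=o(1)$.

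The one point to check carefully is the orientation of the transpose, namely that the Haar factor appears as $Q^\top$ in the product. This is harmless because $Q^\top$ is also Haar distributed. Combining the five block bounds, the entries of $B(A)\at{n+k}$ off the identity block are all $o(1)$. The identity block itself has maximum entry exactly $1$, so $\maxnorm{B(A)\at{n+k}}=1$. The hypotheses $\norm A\ge\alpha/4$ and $\maxnorm A\le1$ are not even needed beyond the normalization $\magn{\widehat A}=1$.
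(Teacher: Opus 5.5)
Your proposal is correct and follows the paper's proof of this lemma essentially block by block. You use the same five bounds, namely $\maxnorm W\lesssim\sqrt{\log n/n}$ from \Cref{a23} and the first estimate of \Cref{a21}, each showing the corresponding block of $B(A)\at{n+k}$ is $o(1)$ against the unit pivot. You also spell out why the $\beta^2W^\top P_kWQ^\top$ block works (the columns of the projection $W^\top P_kW$ have norm at most one), a step the paper leaves implicit.
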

\begin{proof}
We start with the non-zero blocks in the off-diagonal part,
\spliteq{}{
\mmaxnorm{\beta E_k^\top WQ^\top}&\le \beta \maxnorm{WQ^\top}\lesssim \beta \sqrt{\frac{\log n}{n}} = o(1),\\
\mmaxnorm{\beta W^\top E_k}&\le \beta \maxnorm{W}\lesssim  \beta \sqrt{\frac{\log n}{n}} = o(1).}
Next, consider the top two blocks in the lower-right quadrant,
\spliteq{}{
\maxnorm{\alpha^2 W}&\lesssim  \alpha^2 \sqrt{\frac{\log n}{n}} = o(1),
\\
\mmaxnorm{\beta^2 WQ\widehat A}&\le \beta^2 \mmaxnorm{WQ\widehat A}\lesssim  \beta^2 \sqrt{\frac{\log n}{n}} = o(1).}
Finally,
\[
\mmaxnorm{\beta^2 W^\top P_kWQ^\top} \lesssim  \beta^2 \sqrt{\frac{\log n}{n}} = o(1).\]
Note that all entries are smaller in magnitude than the pivot, which is 1.
\end{proof}

\begin{lemma}\label{a44}
If \(A\in\CP_n(\R)\), $\maxnorm A\le1$, and $\norm A\ge\frac \alpha 4$,
then \(B(A)\at{2n}\in\CP_{2n}(\R)\) with probability at least $1-o(1)$.
\end{lemma}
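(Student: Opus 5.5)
We first normalize. Complete pivoting is invariant under multiplying the whole matrix by a nonzero scalar, so it suffices to treat
\[M=\frac{-1}{\alpha^2}B(A)\at{2n}=\bmat{W&\eps WQ\widehat A\\ \eps Q^\top&0},\qquad \eps=\frac{\beta^2}{\alpha^2}=\frac1{\log n}.\]
The plan is to show that for every $0\le j<n$, the $j$-step residual $M\at j$ has its largest entry in the leading block $W\at j$. By the shift-invariance of elimination, $W\at j$ is exactly the $j$-step residual of $W$. Since $W\in\CP_n(\R)$, its top-left entry has magnitude $p_j:=\mmaxnorm{W\at j}$, so it is then a valid complete pivot. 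After $n$ steps, elimination of $W$ is finished and we are left with $M\at n=-\eps^2\widehat A$, a scalar multiple of $A\in\CP_n(\R)$; this is consistent with \Cref{a41}. The remaining $n$ steps are therefore completely pivoted automatically.

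The key step is an explicit, orthogonality-based formula for the three off-leading blocks of $M\at j$. Split $W=\bmat{W_{11}&W_{12}\\W_{21}&W_{22}}$ with $W_{11}$ of order $j$, and use $(W^\top)_{22}=(W/W_{11})^{-1}=(W\at j)^{-1}$, as in the proof of \Cref{a28}. This gives $\bmat{-W_{21}W_{11}^{-1}&I}=W\at jE_j^\top W^\top$ and $\bmat{-W_{11}^{-1}W_{12}\\ I}=W^\top E_jW\at j$. Hence:
\begin{itemize}
\item the top-right block of $M\at j$ is $\eps\,W\at jE_j^\top Q\widehat A$;
\item the bottom-left block is $\eps\,Q^\top W^\top E_jW\at j$;
\item the bottom-right block is $-\eps^2\widehat A+\eps^2 Q^\top \mathcal M_j Q\widehat A$, where $\mathcal M_j:=W^\top E_jW\at jE_j^\top$.
\end{itemize}
The bottom-right formula follows because eliminating the rest of $W\at j$ must return $-\eps^2\widehat A$.

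We then bound each block entrywise against $p_j$, using that $Q$ is Haar and independent of the fixed $W$ and $A$, with \Cref{a21} applied with $\delta=n^{-10}$ and union bounded over all $j$.
\begin{itemize}
\item Top-right and bottom-left. Each entry has the form $\eps\,\bm u^\top Q\bm v$ with $\norm{\bm u}\le\sqrt{n}\,p_j$ (a row or column of $W\at j$, possibly rotated by $W$) and $\norm{\bm v}\le1$ (a column of $\widehat A$, or a standard basis vector). So \Cref{a20} gives the bound $\lesssim\eps\,p_j\sqrt{\log n}=p_j/\sqrt{\log n}<p_j$.
\item Bottom-right. Here we use \Cref{a22}. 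Note that $\tr\mathcal M_j=\tr(W\at jW_{22}^\top)=n-j$, and $\norm{\mathcal M_j}\le\norm{\mathcal M_j}_F\le(n-j)p_j$. The block therefore equals $-\eps^2\frac jn\widehat A$ plus a fluctuation of size $\lesssim\eps^2p_j\log n=p_j/\log n$.
\end{itemize}

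The main obstacle is the deterministic term $\eps^2\frac jn\widehat A$. The only available estimate is $\maxnorm{\widehat A}\le\maxnorm A/\norm A\le 4\log n/n^{1/4}$, which bounds this term by $4j/(n^{5/4}\log n)$. This is only beaten by the lower bound on pivot sizes from \Cref{a36}, $p_j\ge cj/n^{5/4}$, once $n$ is large. The cancellation $1-\tr(\mathcal M_j)/n=j/n$ is essential here: without the factor $j/n$, the early pivots of size about $1/\sqrt n$ would be too small. With it, all off-leading entries are at most $p_j$ for every $j<n$ with probability $1-o(1)$, so $B(A)\at{2n}\in\CP_{2n}(\R)$.
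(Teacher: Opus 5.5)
Your proposal is correct and follows essentially the same route as the paper. You use the same Schur-complement block formulas (your $\mathcal M_j$ is $I-M$ in the paper's notation), the same entrywise bounds via \Cref{a20} for the two off-diagonal blocks and \Cref{a22} for the lower-right block, and the same key trace identity producing the factor $j/n$, which is then beaten by the pivot lower bound \Cref{a36}. The differences are cosmetic: you rescale by $-1/\alpha^2$, you obtain the lower-right block from the quotient property rather than by direct computation, and you center \Cref{a22} on $\mathcal M_j$, which removes the $\sqrt n$ term and so makes the paper's extra appeal to \Cref{a34} in that step unnecessary.
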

\begin{proof}
In order to express the matrix $B(A)\at{2n+k}$, we decompose $W$ and $Q$ into additional blocks. For a fixed \(0\le k<n\), consider the decomposition
\[
W=\bmat{W_{1:}\\W_{2:}}=
\bmat{
W_{11}&W_{12}\\
W_{21}&W_{22}
},
\qquad
Q=\bmat{Q_{1:}\\Q_{2:}},
\]
where \(W_{11}\in\mathbb R^{k\times k}\). Note that \(W_{2:}=E_k^\top W\) and \(Q_{2:}=E_k^\top Q\).
Next, we split the first block row and column of \(B(A)\at{2n}\) according to
this further partitioning:
\begin{equation}
B(A)\at{2n}
=-
\begin{+bmatrix}[colspec=cc|c]
\alpha^2 W_{11}
&
\alpha^2 W_{12}
&
\beta^2 W_{1:}Q\widehat A
\\
\alpha^2 W_{21}
&
\alpha^2 W_{22}
&
\beta^2 W_{2:}Q\widehat A
\\
\hline
\beta^2 Q_{1:}^\top
&
\beta^2 Q_{2:}^\top
&
0
\end{+bmatrix}.
\label{a45}
\end{equation}
Let
\[
S=(W/W_{11})
=
W_{22}-W_{21}W_{11}^{-1}W_{12}.
\]
Applying the Schur complement formula directly to
\Cref{a45}, we obtain
\begin{align}
B(A)\at{2n+k}
&=
\bmat{
-\alpha^2 W_{22}
&
-\beta^2 W_{2:}Q\widehat A
\\
-\beta^2 Q_{2:}^\top
&
0
}
-
\bmat{
-\alpha^2 W_{21}
\\
-\beta^2 Q_{1:}^\top
}
\bigl(-\alpha^2 W_{11}\bigr)^{-1}
\bmat{
-\alpha^2 W_{12}
&
-\beta^2 W_{1:}Q\widehat A
}
\nonumber\\
&=
\bmat{
-\alpha^2 S
&
-\beta^2 S Q_{2:} \widehat A
\\[1mm]
-\beta^2 (Q_{2:}^\top - Q_{1:}^\top W_{11}^{-1} W_{12})
&
\gamma^2 Q_{1:}^\top W_{11}^{-1}W_{1:}Q\widehat A
}.
\label{a46}
\end{align}
Next, we simplify the bottom blocks of \Cref{a46}.
By the block inverse formula applied to $W$, and the fact that $W^{-1}=W^\top$, we have
\eq{\label{a47} S^{-1} = W_{22}^\top \qquad \text{and} \qquad W_{11}^{-1} W_{12} = -W_{21}^\top S.}
Using \Cref{a47},
\begin{align*}
Q_{2:}^\top - Q_{1:}^\top W_{11}^{-1} W_{12} &= Q_{2:}^\top + Q_{1:}^\top W_{21}^\top S \\ &=  Q_{2:}^\top W_{22}^\top S + Q_{1:}^\top W_{21}^\top S \\ &= Q^\top W_{2:}^\top S.
\end{align*}
Let
\eq{\label{a48} M = I - W^\top E_{k} S E_k^\top,}
or, in block coordinates,
\[M = \bmat{I & -W_{21}^\top S \\ 0 & I - W_{22}^\top S} = \bmat{I & W_{11}^{-1} W_{12} \\ 0 & 0}.\]
Then $\tr M = k$ and $Q_{1:}^\top W_{11}^{-1} W_{1:} = Q^\top M$. We may rewrite \Cref{a46} as
\eq{\label{a49} B(A)\at{2n+k} = \bmat{
-\alpha^2 S
&
-\beta^2 S Q_{2:} \widehat A
\\[1mm]
-\beta^2 (W_{2:} Q)^\top S
&
\gamma^2 Q^\top M Q\widehat A
}. }
Note that this formula holds when $k =0$, as, in that case, $S = W$ and $M = I - W^\top W = 0$.

Because $W \in \CP_{n}(\R)$, $|S_{11}| = \|S\|_{\max}$. What remains is to show that $\alpha^2 \|S\|_{\max}$ is the largest magnitude entry of \Cref{a49}. We begin with the top-right block, which we rewrite as
\[S Q_{2:} \widehat A = (E_k S^\top)^\top Q \widehat A.\]
Every column of $E_k S^\top$ is a row of $S$, and so $\norm{E_k S^\top}_{1 \to 2} \le \sqrt{n-k} \maxnorm S$. In addition, $\big\|\widehat A\big\|_{1 \to 2} \le \big\|\widehat A\big\| = 1$. Therefore, by \Cref{a20},
\[\mmaxnorm{S Q_{2:} \widehat A} = \mmaxnorm{(E_k S^\top)^\top Q \widehat A} \lesssim \sqrt{n-k} \maxnorm S \sqrt{\frac{\log n}{n}} =o\pare{\frac{\alpha^2}{\beta^2}} \maxnorm{S}.\]
Next, consider the lower-left block, which we rewrite as
\[(W_{2:} Q)^\top S = Q^\top W^\top E_k S.\]
Because $W^\top E_k$ has orthonormal columns, 
\[\norm{W^\top E_k S}_{1\to 2} = \max_{j \in [n-k]} \norm{S \bm e_j} \le \sqrt{n-k} \maxnorm S,\]
and so, by \Cref{a20},
\[\big\|(W_{2:} Q)^\top S\big\|_{\max} = \big\|Q^\top W^\top E_k S \big\|_{\max} \lesssim \sqrt{\log n} \maxnorm{S} =o\pare{\frac{\alpha^2}{\beta^2}} \maxnorm{S}. \]
Finally, we consider the lower-right block. By \Cref{a48} and the orthogonality of $W$,
\[ \norm{M}_F \le \sqrt{n} + \|S\|_F \le \sqrt{n} + (n-k) \norm{S}_{\max} \qquad \text{and} \qquad \norm M \le 1 + \norm S \le 1 + (n-k) \norm{S}_{\max}. \]
By \Cref{a22} with $X = I$, $Y = \widehat A$, and $M$ as itself,
\begin{align*}
    \maxnorm{Q^\top M Q \widehat A -\frac{k}{n} \widehat A } &\le C \pare{\frac{\pare{\sqrt{n}+(n-k)\maxnorm S}\sqrt{\log n}}n + \frac{\pare{1+(n-k)\maxnorm S}\log n}n} \\
    &\lesssim \sqrt{\frac{\log n}n} + \frac{(n-k)\maxnorm S \log n}n \\
    &\lesssim \maxnorm S \log n,
\end{align*}
as, by \Cref{a34}, $\maxnorm S \ge (n-k)^{-1/2}$. Because $\|A\| \ge \frac \alpha 4$ and, by \Cref{a36}, $\maxnorm{S} \ge c\frac k{n^{5/4}}$, we have
\begin{align*}
\big\|Q^\top M Q \widehat A \big\|_{\max} &\le \maxnorm{Q^\top M Q \widehat A -\frac{k}{n} \widehat A } + \maxnorm{\frac{k}{n} \widehat A} \\
&\lesssim \|S\|_{\max} \log n + \frac{4 k}{n \alpha} \\
&=o\pare{\frac{\alpha^2}{\gamma^2}}\maxnorm{S}.
\end{align*}
This concludes the analysis of $B(A)\at{2n+k}$ for $k = 0,\ldots,n-1$. Since $B(A)\at{3n} = \gamma^2 \widehat A$ and $\widehat A \in \CP_{n}(\R)$, we have $B(A) \at{2n} \in \CP_{2n}(\R)$.
\end{proof}

The above three lemmas hold simultaneously with positive probability, yielding:

\begin{proposition}\label{a50}
For sufficiently large $n$, if there exists $A \in \CP_n(\R)$, $\maxnorm{A} =1$, with $\norm A \ge \frac \alpha 4$, then there is a realization of $Q \in\orthog(n)$ for which $B(A) \in \CP_{4n}(\R)$, $\maxnorm{B(A)} =1$, $\alpha \le \norm{B(A)} \le 3 \alpha$, and
\eq{\label{a51} \growth(B(A)) \ge \frac{\gamma^2}{\norm A} \growth(A).}
\end{proposition}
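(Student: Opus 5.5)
Fix $A\in\CP_n(\R)$ with $\maxnorm A=1$ and $\norm A\ge\frac\alpha4$. Each of \Cref{a42}, \Cref{a43} and \Cref{a44} applies to this $A$ and holds with probability $1-o(1)$ over $Q\sim\Haar(\orthog(n))$. For $n$ sufficiently large, a union bound shows that all three hold at once with positive probability, so we may fix a realization $Q$ where they do. The rest of the argument is deterministic for this $Q$.

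\textbf{Complete pivoting.} By \Cref{a42}, for $0\le k<n$ the residual $B(A)\at k$ has max entry $1$, and this is attained at the current top-left pivot, which is the leading entry of the remaining identity block in \Cref{a38}. So the complete pivoting condition holds at these steps. By \Cref{a43} the same is true for the steps $n\le k<2n$, using the form \Cref{a39}. By \Cref{a44}, $B(A)\at{2n}\in\CP_{2n}(\R)$. By the shift-invariance of elimination, this covers all remaining steps $2n,\dots,4n-1$. Together these give $B(A)\in\CP_{4n}(\R)$.

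\textbf{Max norm and spectral norm.} The step $k=0$ of \Cref{a42} gives $\maxnorm{B(A)}=1$, since the identity block contributes entries equal to $1$ and every other entry is $o(1)$. The bounds $\alpha\le\norm{B(A)}\le3\alpha$ are exactly \Cref{a25}.

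\textbf{Growth.} The residual after $3n$ steps is $B(A)\at{3n}=\gamma^2\widehat A=\frac{\gamma^2}{\norm A}A$ by \Cref{a41}. By shift-invariance, the last $n$ rows of the $U$ factor of $B(A)$ are the $U$ factor of $\frac{\gamma^2}{\norm A}A$, i.e.\ $\frac{\gamma^2}{\norm A}U_A$ where $A=L_AU_A$. Since $A\in\CP_n(\R)$, we have $\maxnorm{L_A}\le1$, so $\growth(A)=\maxnorm{U_A}/\maxnorm A=\maxnorm{U_A}$. Because $\maxnorm{B(A)}=1$,
\[
\growth(B(A))\ge\frac{\maxnorm{U}}{\maxnorm{B(A)}}\ge\frac{\gamma^2}{\norm A}\maxnorm{U_A}=\frac{\gamma^2}{\norm A}\growth(A),
\]
which is \Cref{a51}.

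\textbf{Main obstacle.} The only delicate step is checking that the three lemmas, each holding with probability $1-o(1)$, hold simultaneously. The union bound for this is immediate. The hypotheses $\maxnorm A\le 1$ and $\norm A\ge\alpha/4$ must be confirmed for each lemma; they are the same in all three, and \Cref{a44} additionally requires $A\in\CP_n(\R)$, which is assumed.
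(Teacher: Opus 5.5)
Your proposal is correct and is essentially the paper's argument: the paper obtains the proposition by intersecting the high-probability events of \Cref{a42}, \Cref{a43}, and \Cref{a44}, reading off the norm bounds from \Cref{a25}, and using $B(A)\at{3n}=\gamma^2\widehat A$ for the growth. The one step you leave implicit is $\growth(A)=\maxnorm{U_A}$, which needs $\maxnorm{U_A}\ge 1=\maxnorm{L_A}$; this holds because $|(U_A)_{11}|=|a_{11}|=\maxnorm A=1$ for $A\in\CP_n(\R)$.
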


\begin{theorem}\label{a13}
There is a universal constant $C$ such that for each $n\in\N$, there is a completely pivoted matrix $A\in\CP_n(\R)$ with $\growth(A)\ge n^{\frac1{16}\log_2 n-C\log\log n}$.
\end{theorem}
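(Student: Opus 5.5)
The plan is to iterate \Cref{a50}. We start from a base matrix $A_0\in\CP_{n_0}(\R)$ with $\maxnorm{A_0}=1$ and $\norm{A_0}\ge \alpha(n_0)/4$, and apply $B(\cdot)$ repeatedly with dimensions $n_{j+1}=4n_j$. For the base we take $n_0$ a sufficiently large fixed constant (a power of $4$), so that \Cref{a50} applies at every level and the Definition~\ref{a24} parameters are well defined. As the base matrix we use $A_0=W$ from \Cref{a23} rescaled to have max entry $1$: this matrix is completely pivoted, and its spectral norm is $1/\maxnorm{W}\ge c\sqrt{n_0/\log n_0}\ge \alpha(n_0)/4$ for $n_0$ large. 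If that inequality fails for small $n_0$, any fixed matrix satisfying the hypotheses suffices, since constants are absorbed into $C$.

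The inductive step is then immediate from \Cref{a50}. If $A_j\in\CP_{n_j}(\R)$ satisfies $\maxnorm{A_j}=1$ and $\norm{A_j}\ge\alpha(n_j)/4$, then $A_{j+1}=B(A_j)$ satisfies $\maxnorm{A_{j+1}}=1$ and $\norm{A_{j+1}}\ge\alpha(n_j)$. This is not quite the required hypothesis $\norm{A_{j+1}}\ge \alpha(n_{j+1})/4$ at the next level, but since $\alpha(4n)=\sqrt2\,\alpha(n)\log n/\log(4n)\le \sqrt2\,\alpha(n)$, we get $\alpha(n_j)\ge \alpha(n_{j+1})/\sqrt2\ge \alpha(n_{j+1})/4$, so the induction hypothesis propagates. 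We also carry along $\norm{A_j}\le 3\alpha(n_{j-1})$, and \Cref{a51} gives
\[
\growth(A_{j+1})\ge \frac{\gamma(n_j)^2}{\norm{A_j}}\growth(A_j)\ge \frac{\gamma(n_j)^2}{3\alpha(n_{j-1})}\growth(A_j).
\]
The multiplicative gain per level is therefore of order $n_j^{1/2}/\log^4 n_j\cdot n_{j-1}^{-1/4}\log n_{j-1}\asymp n_j^{1/4}/\mathrm{polylog}(n_j)$.

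The main step is to sum the exponents. With $n_j=4^j n_0$ and $K$ levels, so that $N=n_K=4^Kn_0$, the total growth is
\[
\prod_{j<K} n_j^{1/4}(\log n_j)^{-O(1)}.
\]
The exponent is $\frac14\sum_{j<K}\log_2 n_j\approx \frac14\cdot 2\cdot\frac{K^2}{2}=\frac{K^2}{4}$ bits. Since $\log_2 N\approx 2K$, this equals $\frac1{16}(\log_2 N)^2$, i.e., $N^{\frac1{16}\log_2 N}$. The polylogarithmic losses contribute $N^{-O(\log\log N)}$ and the base case contributes $N^{-O(1)}$, which gives the claimed $N^{\frac1{16}\log_2 N-C\log\log N}$ for $N$ a power of $4$ times $n_0$.

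Finally, general $n$ is handled as in \Cref{a12}. Take the largest admissible $N\le n$, which satisfies $N\ge n/4$ up to the fixed $n_0$, and pad: form $I_{n-N}\oplus A_K$, scaled so the identity block equals $\maxnorm{A_K}=1$. This matrix remains completely pivoted, because the unit pivots in the identity block come first, do not interact with $A_K$, and are maximal. Its growth is unchanged, and replacing $N$ by $n/4$ costs only $O(\log n)$ in the exponent, which is absorbed into $C\log\log n$ (indeed into $O(\log n)$ bits). Small $n$ are absorbed by the constant $C$. The only delicate point I expect is the bookkeeping that the lower-bound hypothesis on $\norm{A_j}$ propagates across levels, together with checking that the base case meets $\norm{A_0}\ge\alpha/4$. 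Both are handled above via $\alpha(4n)\le\sqrt2\,\alpha(n)$ and \Cref{a35}.
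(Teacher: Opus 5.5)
Your proposal is correct and follows the paper's proof essentially step for step. You iterate \Cref{a50} with $n_{k+1}=4n_k$, propagate $\norm{A_k}\ge\alpha(n_k)/4$ using $\alpha(n_k)\ge\alpha(n_{k+1})/4$, multiply the per-level gains $\gamma(n_k)^2/(3\alpha)$, sum the exponents to get $\frac1{16}\log_2^2 n$, and pad with an identity block.

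The only difference is the base case. The paper uses the explicit matrix $(I+s\one\one^\top)/(1+s)$, which has $\norm{A_0}=\alpha(n_0)$ exactly and growth $1$. Your rescaled $W$ from \Cref{a23} works equally well: \Cref{a50} only needs the lower bound $\norm{A}\ge\alpha/4$. The first-step factor $\gamma(n_0)^2/\norm{A_0}\ge\gamma(n_0)^2/\sqrt{n_0}$ is then just a fixed constant.
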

\begin{proof}
It suffices to prove our desired result for geometrically spaced $n$, for then it would then hold for all $n$ simply by padding the resulting matrices with the identity matrix, $I \oplus A$. 

Let $n_0$ be sufficiently large so that \Cref{a50} holds. Let 
\[\alpha(n) = \frac{n^{1/4}}{\log n}, \qquad \gamma(n) = \frac{n^{1/4}}{\log^2 n}, \qquad
s = \frac{\alpha(n_0)-1}{n_0 - \alpha(n_0)}, \qquad \text{and} \qquad A_0 = \frac{I + s \one \one^\top}{1+s}. \]
We have $\maxnorm{A_0} =1$, $\norm{A_0}= \alpha(n_0)$, and
\[A_0\at j = \frac{1}{1+s} \pare{I + \frac{s}{1+j s} \one \one^\top} \qquad \text{for} \quad j = 0,\ldots,n-1.\]
Therefore, $A_0 \in \CP_{n_0}(\R)$ and $\growth(A_0)=1$. Now, let $n_k = 4^k n_0$ and note that, if there exists an $A_{k}\in \CP_{n_k}(\R)$, $\|A_{k}\|_{\max} =1$, with $\frac14 \alpha(n_k) \le \|A_k\| \le 3 \alpha(n_k)$, then we may choose a realization $A_{k+1} = B(A_k)$ provided by \Cref{a50} such that $A_{k+1} \in \CP_{n_{k+1}}(\R)$, $\maxnorm{A_{k+1}}=1$, and
\[\frac1{4} \alpha(n_{k+1}) \le \alpha(n_k) \le \norm{A_{k+1}} \le 3 \alpha(n_k) \le 3 \alpha(n_{k+1}).\]
The growth factor of $A_{k+1}$ is at least
\[\growth(A_{k+1}) \ge \frac{\gamma(n_k)^2}{3 \alpha(n_k)} \growth(A_k) .\]
Applying this repeatedly, and noting that $k = \frac12\pare{\log_2 n_k - \log_2 n_0}$, we obtain 
\[\growth(A_{k}) \ge \prod_{j =0}^{k-1}\frac{\gamma(n_j)^2}{3 \alpha(n_j)} = \prod_{j=0}^{k-1} \frac{\pare{4^jn_0}^{\frac14} }{3\log^3 (4^j n_0)} \ge \frac{4^{\frac{k(k-1)}{8}} n_0^{\frac k4}}{3^{k}\log^{3k}(n_k)} = \frac{n_k^{\frac k4}}{3^k 4^{\frac{k(k+1)}8}\log^{3k}(n_k)} = n_k^{\frac1{16} \log_2 (n_k) - C \log \log n_k}\]
for some constant $C$ and all $k$ sufficiently large.
\end{proof}

\begin{remark}
    One can construct $A$ achieving the bound of \Cref{a13} so that it is the unique element in $\set{P AQ}\cap\CP_n(\R)$, i.e., there is no freedom in implementing complete pivoting. To do this, just multiply on both sides by $\diag(1,e^{-\eps},e^{-2\eps},\ldots)$ for sufficiently small $\eps>0$.
\end{remark}

\section*{Acknowledgements}
The authors would like to thank Louisa Thomas for improving the style of presentation. The second author would like to thank Alan Edelman for years of conversation and collaboration on the growth factor problem in Gaussian elimination, as well as illuminating conversations with Alex Townsend, Nick Higham, and Nick Trefethen on the subject. This material is based upon work supported by the Institute for Advanced Study and the National Science Foundation under Grant No. DMS-1926686 -- the second author is grateful to the IAS for their membership for the 2021--2022 academic year, where a number of preliminary results and ideas which inspired this final manuscript were developed. This material is based upon work supported by the National Science Foundation under Grant No. DMS-2513687. Some of the research presented here is the result of human--AI interaction, see the AI usage statement below.

\section*{AI Usage Statement}
This manuscript blends purely human-based mathematics with some mathematics that involved the use of ChatGPT Sol 5.6 and Claude Fable 5. This entire manuscript, including the results and proofs, was written by the authors. We aim to give a clear and detailed picture of AI involvement, section by section:
\begin{itemize}
\item Section 1 (human): No AI involvement.
\item Section 2 (human/AI): Authors proved  results regarding sparse matrices and partial pivoting in 2023. Interaction with AI improved the human-only results. 
\item Section 3 (human/AI): AI provided explanations of DPPs, examples of stochastic domination, and answered questions regarding the strong Rayleigh property. Authors had the idea of using a projection DPP, a Sylvester Hadamard matrix, and Hadamard matrices. Interaction with AI led to the proof that the choice of row permutation does not affect the Frobenius norms of $L$ and $U$.
\item Section 4 (human/AI): Authors had a proof of NP-hardness for a slightly different definition of growth factor in 2023. Interaction with AI led to a hardness proof for $\growth(\cdot)$, which does not appear in the paper. Inspired by this proof, which shares many similarities with the original human-only proof, authors proved, without AI, the construction that appears in the manuscript.
\item Section 5 (human/AI): AI was given the measure and prompted to compute expectations using determinant formulas. These were unsatisfactory. AI suggested a Jordan block after a prompt for a proof of boundedness. Authors performed simulations, which suggested that the expectation was tail dominated (AI confirmed this). Authors instead considered high probability bounds, still with a Jordan block. Authors mistakenly thought taking fixed $z$ near $1$ worked. Interaction with AI led to the suggestion of $z = 1 - 1/\sqrt{n}$. The above was for $p =2$; authors generalized this without AI.
\item Section 6 (human): No AI involvement. The earliest proof that the growth factor under rook pivoting is quasi-polynomial was completed in 2022.
\item Section 7 (human/AI): Interaction with AI led to a quasi-polynomial lower bound construction. This interaction strongly followed the authors' research program/architecture for proving that complete pivoting has quasi-polynomial growth, which was a human-only endeavor from 2022 until a week prior to the first draft of this manuscript. Indeed, the resulting lower bound follows the human-generated road map that was provided. The resulting construction was quite complicated and hard to follow. Authors produced, without AI, the much simpler and intuitive construction that appears in the manuscript. This research program for the growth factor under complete pivoting could not have been completed in such a short time period without the involvement of AI.
\end{itemize}
 The authors welcome questions regarding the role of AI in this work and comments regarding the best methodology for disclosing AI interactions, as no universal norms seem to currently exist yet.

\bibliographystyle{alpha}
\bibliography{outbib}

\end{document}